\journal{arXiv}
\newtheorem{theorem}{Theorem}[section]
\newtheorem{remark}{Remark}
\newtheorem{lemma}{Lemma}[section]
\begin{document}

\begin{frontmatter}

\title{Modeling calcium dynamics in neurons with endoplasmic reticulum: existence, uniqueness and an implicit-explicit finite element scheme   }
\tnotetext[mytitlenote]{This work was funded by National Institute of Mental Health (NIMH), Grant number: R01MH118930.}
%\tnotetext[mytitlenote]{Fully documented templates are available in the elsarticle package on \href{http://www.ctan.org/tex-archive/macros/latex/contrib/elsarticle}{CTAN}.}

%% Group authors per affiliation:
\author{Qingguang Guan\corref{mycorrespondingauthor}}
\address{Department of Mathematics, Temple University, Philadelphia, PA 19122, USA}
%\fntext[myfootnote]{This work is supported by the National Natural Science Foundation of China Grant No. 12001325.}
\ead{qingguang.guan@temple.edu}
%% or include affiliations in footnotes:
\author{Gillian Queisser}
\address{Department of Mathematics, Temple University, Philadelphia, PA 19122, USA}
\ead{gillian.queisser@temple.edu}
\cortext[mycorrespondingauthor]{Corresponding author}

\begin{abstract}
	Like many other biological processes, calcium dynamics in neurons containing an endoplasmic reticulum is governed by diffusion-reaction equations on interface-separated domains. Interface conditions are typically described by systems of ordinary differential equations that provide fluxes across the interfaces. Using the calcium model as an example of this class of ODE-flux boundary interface problems, we prove the existence, uniqueness and boundedness of the solution by applying comparison theorem, fundamental solution of the parabolic operator and a strategy used in Picard's existence theorem. Then we propose and analyze an efficient implicit-explicit finite element scheme which is implicit for the parabolic operator and explicit for the nonlinear terms. We show that the stability does not depend on the spatial mesh size. Also the optimal convergence rate in $H^1$ norm is obtained. Numerical experiments illustrate the theoretical results.
\end{abstract}

\begin{keyword}
calcium dynamics; coupled reaction diffusion equations; ODE controlled interfaces; existence and uniqueness; 
implicit-explicit FEM scheme; stability and convergence.
%\texttt{elsarticle.cls}\sep \LaTeX\sep Elsevier \sep template
\MSC[2010] %65M60\sep  65M15\sep   65M12\sep  35K55\sep  35K35
68Q25, 68R10, 68U05
\end{keyword}
\end{frontmatter}

%\linenumbers

\section{Introduction}\label{intro}

In a variety of applications, particularly in biology, spatio-temporal dynamics can be described by diffusion-reaction systems. Intuitively, and from an energy consumption perspective, such systems appear optimal but usually have the drawback of producing slow, short-range, and potentially inefficient communication pathways. In order to overcome these drawbacks, active and energy-consuming processes are introduced by biology on two-dimensional manifolds, i.e. interfaces which separate multiple domains. Such processes, in the biological context, are channels, pumps, and receptors capable of exchanging specific ions across the interfaces and are mathematically described by systems of ordinary differential equations (ODEs), nonlinearly coupled to the domain equations, see  \cite{Amar2005,Breit2018,MacKrill2012,HODGKIN1952,Luo1991,Matano2011,Sneyd2003,Veneroni2006}. Given the ubiquitous nature of this modeling approach, we study a mathematical model consisting of diffusion-reaction equations in $\Omega_c$ and $\Omega_e$ (see Fig. \ref{fig1} for a 2D case), coupled by nonlinear dynamic boundary conditions involving an ODE system on the interface $\Upsilon$. 
To provide context to this ODE-coupled system of partial differential equations (PDEs), we chose cellular calcium dynamics as a leading example. Calcium dynamics are among the most important regulators in neurons and their three-dimensional (3D) spatio-temporal dynamics have been shown to play a critical role in cellular function, learning, and many neurodegenerative diseases, see \cite{Clapham1995,MacKrill2012, Ozturk2020,Raffaello2016,Rosales2004}. Calcium dynamics is further controlled by the 3D organization of cells,  see \cite{Ali2019,Ozturk2020,Raffaello2016,Wu2017}, thus making calcium in neurons a prime candidate for the problems studied in this paper. 
\begin{figure}[H]\label{fig1}
	\begin{center}
		\begin{tikzpicture}[scale=0.8] 
			%\path[use as bounding box,draw] 	
			\path[use as bounding box] (-0.18,-0.8) rectangle (4.65,3.2);
			\draw[ yshift=-0.2cm](0,0.5) to [closed, curve through = {(1,-.5 )..(2,-0.6)..(3,-.5)} ] (4,0);
			\filldraw[opacity=0.3,gray, yshift=2.5cm](1,-0.7) 
			to [closed, curve through = {(1.5,-0.4 )..(2,-0.4)..(2.5,-0.4)} ] (3,-0.7);
			\draw[black, yshift=2.5cm](1,-0.7) 
			to [closed, curve through = {(1.5,-0.4 )..(2,-0.4)..(2.5,-0.4)} ] (3,-0.7);
			\node [above right] at (3.5,0.7) {$\Omega_c$};
			\node [above right] at (4.5,0.7) {$\partial \Omega$};
			\node [above] at (2.3,1) {$\Omega_e$};
			\node [ ] at (1.03,1.2) {${\Upsilon}$};
		\end{tikzpicture}
	\end{center}
	\caption{Domain $\Omega = \Omega_c\cup {\Upsilon}\cup\Omega_e$.}\label{fg2}
\end{figure}
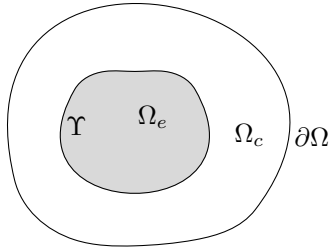

The geometry structures of neurons can be found in   \cite{Ozturk2020,Wu2017}. Neuron has a tubular organelle - endoplasmic reticulum (i.e. ER), which can be thought of as a cell-within-a-cell. For simplicity, without loss of generality, we assume the domain of the whole cell in 3D, including ER, is simply connected without holes, with smooth boundary; so does the domain of ER. Also there are no other organelles in this cell, boundaries (i.e. membranes) of ER and the cell have no contact. A cross section of an axon of the simplified neuron cell can be shown by Figure \ref{fig1}, where 
$\Omega_e$ is the region of ER lumen; ${\Upsilon}$ is the ER membrane; $\Omega_c$ is the region of cytosol; $\partial \Omega$ is the plasma membrane.

Here, let $\Omega$ be a bounded domain with $C^{1,\beta}$ boundary in $\mathbb{R}^n$, $n=2,3$, ${\Upsilon}$ be the $C^{1,\beta}$ interface, $0<\beta<1$. We define 
\begin{itemize}
	\item[] $u:\ \overline{\Omega}_c\times[0,T] \rightarrow \mathbb{R}$ as the Ca$^{2+}$ concentration in cytosol,
	\item[] $b:\  \overline{\Omega}_c\times[0,T] \rightarrow \mathbb{R}$ as the concentration of a buffer interacting with $u$,
	\item[] $u_e: \overline{\Omega}_e\times[0,T] \rightarrow \mathbb{R}$ as the Ca$^{2+}$ concentration in ER,
\end{itemize}
where $[0,T]$ is the time domain. We define the PDE model by \eqref{uomega1} to \eqref{bBdyonOmega2}
\begin{eqnarray}
	&&{\partial_t u} \ -\nabla\cdot ({D_c} \nabla u) = 
	f(b,u) 
	\quad \text{on\ } \Omega_c\times(0,T]
	\label{uomega1}\\
	&&{\partial_t b}\ -\nabla\cdot ({D_b} \nabla b) =
	f(b,u) 
	\quad \text{on\ } \Omega_c\times(0,T]
	\label{bomega1}\\
	&&{\partial_t u_e} -\nabla\cdot ({D_e} \nabla u_e) = 
	0 \quad \text{on\ } \Omega_e\times(0,T]
	\label{ueomega2}
\end{eqnarray}
${\partial_t }$ is the partial derivative in time, $f(b,u) = K_b^-(b^0-b)-K_b^+ bu$ is the reaction term {\color{black} which models storing and releasing Ca$^{2+}$ in cytosol, see \cite{Breit2018} for more details,} and $D_c, D_b, D_e,$ $ K_b^{\pm}, b^0$ are positive constants. 
Denoting the outer normal derivative by $\partial_{n}$,  direction of the unit vector $n$ depends on which domain the function lies, the boundary conditions describing fluxes across the interfaces are
\begin{eqnarray}
	&&D_c\partial_{n} u \  = J_{l,p}-J_N-J_P \quad {\rm on\ } \partial\Omega\times(0,T]
	\label{uBdyonOmega1}\\
	&&D_c\partial_{n} u \  = J_R+J_{l,e}-J_S \quad {\ \rm on\  } \Upsilon\times(0,T]
	\label{ueBdyonGamma2}\\
	&&D_e\partial_{n} u_e = J_S-J_R-J_{l,e}\quad {\ \rm on\ } \Upsilon\times(0,T]
	\label{ueBdyonGamma1}\\
	&&D_b\partial_{n} b\ \  =\ 0\quad {\rm on\ } \partial\Omega\cup \Upsilon\times(0,T]
	\label{bBdyonOmega1}
\end{eqnarray}
with initial data
\begin{eqnarray}
	&&u(0,x) = u_0(x),\ b(0,x) = b_0(x)  
	\ {\rm on\ } \Omega_c; \ u_e(0,x) = u_{e,0}(x)\ {\rm on\ } \Omega_e\\
	&&c_1(0) = c_{1}(0,u_0(x)),\ o(0) =o(0,u_0(x)),\  c_2(0) = c_{2}(0,u_0(x))
	\ {\rm on\ } \Upsilon.
	\label{bBdyonOmega2}
\end{eqnarray}
The choice of the fluxes $J_N,J_P,J_{l,p},J_R,J_S,J_{l,e}$ in \eqref{uBdyonOmega1}-\eqref{ueBdyonGamma1} is motivated by the calcium models previously studied in \cite{Atri1993,Breit2018a,Breit2018,Keizer1996,Means2006, Sneyd2003}. They describe the pumps that exchange Ca$^{2+}$ across $\partial \Omega$ and $\Upsilon$. {\color{black}
Ca$^{2+}$ flux crossing plasma membrane ($\partial\Omega$) is governed by PMCA pumps ($J_P$), NCX pumps ($J_N$) and leak channels ($J_{l,p}$); Ca$^{2+}$ flux crossing ER membrane ($\Upsilon$) is controlled by RyR channels ($J_R$), SERCA pumps ($J_S$) and leak channels ($J_{l,e}$), see \cite{Clapham1995, Ozturk2020} for introduction, and \cite{Breit2018} for an illustration.
Below we list the fluxes under consideration. On $\Upsilon$, the flux through RyR channels (or Ryanodine receptors) is}
\begin{equation}\label{J_R}
	J_{R} = C_1^e P(t,u)(u_e-u),
\end{equation}
where $C_1^e$ is a positive constant and for $u\geq 0$, $P(t,u)\in [0,1]$ is the probability that RyR channel is open:
\begin{equation}\label{ptu}
	P(t,u) = 1-c_1(t)-c_2(t),
\end{equation}
and $c_1, c_2$ come from the ODE system
\begin{equation}\label{matrixf1}
	\begin{bmatrix}
		c_1'\\
		o'\\
		c_2'
	\end{bmatrix}
	=
	\begin{bmatrix}
		-u^4k_a^+-k_a^- & -k_a^- & -k_a^- \\
		-u^3k_b^+ & -u^3k_b^+-k_b^- & -u^3k_b^+ \\
		-k_c^+    & -k_c^+       & -k_c^+-k_c^- 
	\end{bmatrix}
	\begin{bmatrix}
		c_1\\
		o\\
		c_2
	\end{bmatrix}
	+
	\begin{bmatrix}
		k_a^-\\
		u^3k_b^+\\
		k_c^+
	\end{bmatrix},
\end{equation}
where $k_a^{\pm},k_b^{\pm},k_c^{\pm}$ are positive constants, and obviously 
$\{c_1,o,c_2\}$ depend on position due to $u$, but for simplicity, the spatial variable is hidden, the initial values of $\{c_1,o,c_2\}$ are non-negative and $c_1(0)+o(0)+c_2(0)\leq 1$,
see \cite{Breit2018} and \cite{Keizer1996} for details. Next, we define the flux through SERCA pumps
\begin{equation}\label{J_S}
	J_{S} = C_2^e\frac{u}{(K_s+u)\phi_{m}(u_e)},\quad J_{l,e} = C_3^e(u_e-u),
\end{equation}
where $K_s, C_2^e, C_3^e$ are positive numbers, $\phi_{m}(\cdot)$ is defined as
$$
\phi_{m}(x) = 
\left\{
\begin{aligned} 
	&{m}/{2}, & {\rm \ if\ } &x\leq 0\\ 
	&{m^6}/{(2 m^5-5 m^2 x^3+6 m x^4-2 x^5)}, & {\rm \ if\ } &0<x<m\\ 
	&x, & {\rm \ if\ } & x \geq m
\end{aligned}
\right.
$$
where $m>0$ can be any small number.
On $\partial \Omega$ we define
\begin{equation}\label{J_N}
	J_P=C_1^c\frac{u^2}{K_p^2+u^2},\quad J_N=C_2^c\frac{u}{K_n+ u},\quad  J_{l,p}=C_3^c(c_o-u)
\end{equation}
where $C_1^c, C_2^c, C_3^c, K_p, K_n$ are positive constants, $c_o$ is the extracellular Ca$^{2+}$ concentration, we assume it is a positive  constant, however $c_o$ can also be a positive bounded function. $J_N$ and $J_P$ are commonly used first and second order Hill equations, see \cite{Breit2018}.

An analysis of linear parabolic equations in two adjoining domains, coupled with nonlinear but nondynamic boundary conditions, can be found in Calabr\`o 
\cite{Calabro2006}. 
%Unlike the method in \cite{Calabro2006} which gives the $L^1$ estimates of solutions, our method here can get $L^\infty$ bounds. 
An implicit DG method in \cite{Cangiani2013,Cangiani2016} was developed to solve such problems that originated from mass transfer through semipermeable membranes.
Poisson equations coupled with nonlinear dynamic boundary conditions in \cite{Amar2005,Matano2011,Veneroni2006}, arising in electrodynamics on membranes are analyzed. A numerical method in \cite{Henriquez2017} based on a boundary integral formulation, together with an implicit-explicit scheme, is proposed to solve these electrical activity problems, also implicit-explicit partitioned time stepping for a parabolic two domain problem is considered in Connors\cite{connors2009}. Both motivate us to design our scheme. However the analysis and numerical methods in \cite{Amar2005,Calabro2006,Cangiani2013,Cangiani2016,Henriquez2017,Matano2011,Veneroni2006} cannot be applied to our model directly, since the reaction term $f(b,u)$, and the interface conditions, especially the non monotonic probability function $P(t,u)$ involving ODE systems,  are not globally Lipschitz continuous. Previous work on Ca$^{2+}$ models focused on simulations employing numerical methods
such as the Finite Volume Method in \cite{Breit2018a,Breit2018}, the Finite Element Method in \cite{Gil2021,Means2006}, but most of them are fully implicit and less efficient.
Despite the importance of calcium models, to our knowledge, a thorough study for the wellposedness, fast numerical methods and convergence analysis have not been done. 
The goal of this paper is to prove existence and uniqueness, to retrieve bounds for solution of the model \eqref{uomega1}-\eqref{bBdyonOmega2}, then to obtain error estimates for an implicit-explicit FEM scheme which is more efficient and easy to parallelize, since equations \eqref{uomega1}-\eqref{ueomega2} are fully decoupled for each time step. Unlike the methods used in \cite{Amar2005,Calabro2006,Matano2011,Veneroni2006}, our proof of wellposedness is inspired by Picard's existence theorem, based on the fundamental solution of parabolic operator. The FEM scheme is an Euler method implicit for the parabolic operator and explicit for the nonlinear parts. A key part of the error analysis is to construct the elliptic projection, similar to the work in Cangiani\cite{Cangiani2013} and Douglas\cite{Douglas1973}, but incorporating the dynamic ODE system. So that the wellposedness of the associated nonlinear problem needs to be addressed. 

The paper is arranged as follows: Necessary lemmas are provided in Section \ref{Lemmas}. The existence, uniqueness and boundedness of the solution for the model equations \eqref{uomega1}-\eqref{bBdyonOmega2} are proved in Section \ref{E&U&B}.
Section \ref{errorGP} is devoted to the $H^1$ error estimate for the Galerkin projection which is based on a nonlinear problem in variational form. In Section \ref{SD}, the error analysis of the semi-discrete Galerkin method is carried out. The convergence rate in $H^1$ norm for a fully discrete implicit-explicit FEM scheme is obtained in Section \ref{FD}. Numerical tests in Section \ref{Num} validate the theoretical results. Conclusions are drawn in Section \ref{con}.

% The outline is not required, but we show an example here.
%The paper is organized as follows. Our main results are in
%\cref{sec:main}, our new algorithm is in \cref{sec:alg}, experimental
%results are in \cref{sec:experiments}, and the conclusions follow in
%\cref{sec:conclusions}.

\section{Lemmas}\label{Lemmas}
%We define functions
%$$
%\phi(x) = 
%\left\{
%\begin{aligned} 
%&-a, & {\rm \ if\ } &x\leq -a\\
%&\frac{3 x^5}{a^4}+\frac{7 x^4}{a^3}+\frac{4 x^3}{a^2}+x, 
%& {\rm \ if\ } &-a <x<0\\ 
%&x, & {\rm \ if\ } &0\leq x \leq M\\ 
%&\frac{3 (x-M)^5}{a^4}-\frac{7 (x-M)^4}{a^3}+\frac{4 (x-M)^3}{a^2}+x, & {\rm \ if\ } &M< x \leq M+a\\ 
%&M+a, & {\rm \ if\ } &x>M+a
%\end{aligned}
%\right.
%$$
%where $a, M >0.$
In this section, we introduce some notations and lemmas. We define $Q_c = (0,T]\times {\Omega}_c$, $Q_e = (0,T]\times {\Omega}_e$. In the following $M$ is a positive constant and function $u$ being bounded by $M$ means $|u|\leq M$.

We then show that the open probability function $P(t,u)$ (see eq.~(\ref{ptu})) is continuous with respect to $u$ if $u\in C(\Upsilon\times[0,T])$ and $0\leq u\leq M$. 
\begin{lemma}\label{pu1u2}
	Let $u_1$, $u_2$ $\in C(\Upsilon\times[0,T])$ be non-negative functions bounded by $M$ and  $P(t,u)$ be defined as \eqref{ptu}-\eqref{matrixf1}. We then have
	\begin{equation}
		|P(t,u_1)-P(t,u_2)|\leq 
		K\left(|u_1(0)-u_2(0)|+\int_0^t|u_1(s)-u_2(s)|{\rm d}s\right),
	\end{equation}
	where $K$ is a positive constant that depends on $M$, but doesn't depend on $u_1, u_2$.
\end{lemma}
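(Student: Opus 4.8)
The plan is to exhibit $P(t,u)$ as a Lipschitz functional of the solution trajectory $(c_1,o,c_2)$ of the linear-in-$(c_1,o,c_2)$ ODE system \eqref{matrixf1}, and then to control the dependence of that trajectory on the coefficient function $u$ via a Gronwall argument. Write the system \eqref{matrixf1} as $y' = A(u(t))\,y + g(u(t))$ with $y = (c_1,o,c_2)^\top$, where $A(\cdot)$ and $g(\cdot)$ are polynomial in $u$ (entries involve $u^3$, $u^4$ and constants). Since $0 \le u_i \le M$ on $\Upsilon\times[0,T]$, all entries of $A(u_i(t))$ and $g(u_i(t))$ are bounded by a constant $C_M$ depending only on $M$ and the rate constants $k_a^\pm,k_b^\pm,k_c^\pm$; moreover, for $0\le a,b\le M$ we have the elementary estimates $|a^3-b^3|\le 3M^2|a-b|$ and $|a^4-b^4|\le 4M^3|a-b|$, so $\|A(u_1(t))-A(u_2(t))\| + \|g(u_1(t))-g(u_2(t))\| \le C_M\,|u_1(t)-u_2(t)|$.

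First I would establish an a priori bound $\|y_i(t)\| \le C$ on both trajectories $y_1,y_2$ (the ones generated by $u_1$ and $u_2$) uniformly on $[0,T]$: this follows from the integral form $y_i(t) = y_i(0) + \int_0^t\big(A(u_i(s))y_i(s) + g(u_i(s))\big)\,ds$, the bounds on $A,g$, the bound $\|y_i(0)\|\le 1$ coming from the stated hypothesis $c_1(0)+o(0)+c_2(0)\le 1$ with non-negative entries, and Gronwall's inequality. (Alternatively one can note the system is designed so that the simplex $\{c_1,o,c_2\ge 0,\ c_1+o+c_2\le 1\}$ is invariant, but a crude Gronwall bound suffices here.) Next, subtract the two integral equations: with $w(t) := y_1(t)-y_2(t)$ and $w(0) = y_1(0)-y_2(0) = 0$ — since by \eqref{bBdyonOmega2} the initial data $c_i(0),o(0)$ depend only on $u_0$, which we must treat carefully — actually the cleanest route is to keep $w(0)$ general, write
\begin{align*}
w(t) &= w(0) + \int_0^t \Big( A(u_1(s))w(s) + \big(A(u_1(s))-A(u_2(s))\big)y_2(s) \\
&\qquad\qquad + \big(g(u_1(s))-g(u_2(s))\big)\Big)\,ds,
\end{align*}
take norms, use $\|A(u_1(s))\|\le C_M$, $\|y_2(s)\|\le C$, and the Lipschitz bounds above to get
\[
\|w(t)\| \le \|w(0)\| + C_M\!\int_0^t \|w(s)\|\,ds + C_M(C+1)\!\int_0^t |u_1(s)-u_2(s)|\,ds,
\]
and then apply Gronwall to conclude $\|w(t)\| \le e^{C_M T}\big(\|w(0)\| + C_M(C+1)\int_0^t|u_1(s)-u_2(s)|\,ds\big)$. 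Finally, since $P(t,u) = 1 - c_1(t) - c_2(t)$ is a $1$-Lipschitz linear functional of $y(t)$, we get $|P(t,u_1)-P(t,u_2)| \le \sqrt{2}\,\|w(t)\|$, and absorbing constants into a single $K = K(M,T,k^\pm)$ together with $\|w(0)\| \le C|u_1(0)-u_2(0)|$ (the initial map $u_0\mapsto(c_1(0),o(0),c_2(0))$ being taken Lipschitz, or simply identifying $w(0)$ with the difference of initial concentrations) yields exactly the claimed inequality
\[
|P(t,u_1)-P(t,u_2)| \le K\Big(|u_1(0)-u_2(0)| + \int_0^t|u_1(s)-u_2(s)|\,ds\Big).
\]

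The main obstacle I anticipate is purely bookkeeping rather than conceptual: one must make sure the a priori bound $\|y_i\|\le C$ is genuinely uniform in $u_i$ (depending only on $M$, $T$, and the fixed rate constants, not on finer properties of $u_i$), because this constant multiplies the coefficient-perturbation term and therefore enters $K$. A secondary point needing care is the precise meaning of the "initial value" term $|u_1(0)-u_2(0)|$: the cleanest interpretation is that $w(0)$, the difference of the ODE initial data, is controlled by $|u_1(0)-u_2(0)|$ through whatever (Lipschitz) rule assigns $(c_1(0),o(0),c_2(0))$ from the initial concentration, so I would state that dependence explicitly at the start of the proof. Everything else — the polynomial Lipschitz estimates, the two applications of Gronwall — is routine.
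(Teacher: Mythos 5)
Your proposal is correct and follows essentially the same route as the paper's proof: write the ODE in integral form, decompose the difference of trajectories into a term linear in $w$, a coefficient-perturbation term $(A(u_1)-A(u_2))y_2$, a forcing difference, and an initial-data difference, then apply Gronwall together with uniform bounds on the trajectories and the Lipschitz dependence of the initial data on $u(0)$. Your treatment is somewhat more explicit about the a priori bound on $y_i$ and the polynomial Lipschitz estimates, but there is no substantive difference in approach.
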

\begin{proof}
	Let $\vec{q}(t) = [c_1,o,c_2]^T$, $\vec{f}(u) = [k_a^+,u^3k_b^-,k_c^-]^T$, and 
	${\bf{A}}(u)$ be the coefficient matrix in \eqref{matrixf1}. 
	%	\begin{equation}\label{P_modified}
		%	{\bf{A}}(u)
		%	=
		%	\begin{bmatrix}
			%	-\phi(u)^4k_a^+-k_a^- & -k_a^- & -k_a^- \\
			%	-\phi(u)^3k_b^+ & -\phi(u)^3k_b^+-k_b^- & -\phi(u)^3k_b^+ \\
			%	-k_c^+    & -k_c^+       & -k_c^+-k_c^- 
			%	\end{bmatrix}
		%	\end{equation}
	The ODE system can then be written as 
	\begin{equation}
		\frac{{\rm d}\vec{q}}{{\rm d}t} = {\bf{A}}(u)\vec{q}(t)+\vec{f}(u).
	\end{equation}
	For given $u_1, u_2$, the corresponding integral equations are
	\begin{eqnarray*}
		\vec{q}_i(t) 
		= 
		\int_0^t{\bf{A}}(u_i)\vec{q}_i(s){\rm d}s+\int_0^t\vec{f}(u_i){\rm d}s+\vec{q}_i(0,u_i(0)),\quad i=1,2 .
		%		\\
		%		\vec{q}_2(t) 
		%		&=& 
		%		\int_0^t{\bf{A}}(u_2)\vec{q}_2(s){\rm d}s+\int_0^t\vec{f}(u_2){\rm d}s+\vec{q}_2(0,u_2(0)) 
	\end{eqnarray*}
	%	Then we have
	%	\begin{eqnarray*}
		%		\vec{q}_1(t)-\vec{q}_2(t) 
		%		&=& 
		%		\int_0^t{\bf{A}}(u_1)(\vec{q}_1(s)-\vec{q}_2(s)){\rm d}s+\int_0^t\vec{f}(u_1)-\vec{f}(u_2){\rm d}s
		%		\\
		%		&&
		%		+\int_0^t({\bf{A}}(u_1)-{\bf{A}}(u_2))\vec{q}_2(s){\rm d}s+\vec{q}_1(0,u_1(0))-\vec{q}_2(0,u_2(0))
		%	\end{eqnarray*}
	Let $\vec{e}(t) = \vec{q}_1(t)-\vec{q}_2(t),$ $\vec{f_e} = \vec{f}_1(u_1)-\vec{f}_2(u_2),$ ${\bf{A}_e}={\bf{A}}(u_1)-{\bf{A}}(u_2)$ and 
	$\vec{q}_{0,e} = \vec{q}_1(0,u_1(0))-\vec{q}_2(0,u_2(0))$, we have
	\begin{align*}
		\vec{e}(t) 
		&= 
		\int_0^t{\bf{A}}(u_1)\vec{e}(s){\rm d}s+\int_0^t\vec{f_e}{\rm d}s
		+\int_0^t{\bf{A}_e}\vec{q}_2(s){\rm d}s+\vec{q}_{0,e}\\	
		\|\vec{e}(t)\|_1 
		&\leq 
		\int_0^t\|{\bf{A}}(u_1)\|_1\|\vec{e}(s)\|_1{\rm d}s+E_h(t)
	\end{align*}
	where $\|\cdot\|_1$ is the 1-norm and 
	\begin{eqnarray*}
		E_h(t) = \int_0^t\|\vec{f_e}\|_1{\rm d}s
		+\int_0^t\|{\bf{A}_e}\|_1\|\vec{q}_2(s)\|_1{\rm d}s
		+\|\vec{q}_{0,e} \|_1.
	\end{eqnarray*}
	With $E_h(t)$ being non-negative and non-decreasing, by Gronwall's inequality, we have
	\begin{equation}
		\|\vec{e}(t)\|_1 \leq E_h(t)\int_0^t\|{\bf{A}}(u_1)\|_1 {\rm d}s
	\end{equation}
	where $u_1$, $u_2$ are bounded and the initial condition of the ODE system is Lipschitz continuous, so that $\vec{q}_1, \vec{q}_2$ are bounded and
	\begin{equation}
		\|\vec{e}(t)\|_1 \leq 
		K\left(
		|u_1(0)-u_2(0)|+\int_0^t|u_1(s)-u_2(s)|{\rm d}s
		\right).
	\end{equation}
	With 
	$|P(t,u_1)-P(t,u_2)|\leq \|\vec{e}(t)\|_1$, the proof is completed.
\end{proof}

For brevity, from equation \eqref{uBdyonOmega1} and \eqref{ueBdyonGamma1} we define 
$$
g_c(u) := D_c\partial_n u \  {\rm on\ }\partial\Omega, \quad g_e(u,u_e) := D_e\partial_n u_e \  {\rm on\ } \Upsilon.
$$
With Lemma \ref{pu1u2} we get  
\begin{lemma}\label{g12u1u2}
	Let $u_1$, $u_2$ $\in C(\partial\Omega_c\times[0,T])$ and $u_{e1}$, $u_{e2}$ $\in C(\Upsilon\times[0,T])$ be non-negative functions bounded by $M$, then we have
	\begin{align*}
		|g_c(u_1)-g_c(u_2)|
		&\leq K_1|u_1-u_2|,
		\\
		|g_e(u_1,u_{e1})-g_e(u_2,u_{e2})|
		&\leq K_2(|u_1-u_2|+|u_{e1}-u_{e2}|+|u_1(0)-u_2(0)|)
		\\ 
		&+K_2\int_0^t|u_1(s)-u_2(s)|{\rm d}s,
	\end{align*}
	where $K_1$, $K_2$ are positive constants that depend on $M$, but do not depend on $u_1, u_2$ and $u_{e1}$, $u_{e2}$.
\end{lemma}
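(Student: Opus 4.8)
The plan is to establish both inequalities pointwise on the respective interfaces, reducing each to elementary Lipschitz estimates for the individual fluxes, with the single genuinely nonlocal ingredient supplied by Lemma~\ref{pu1u2}.

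First I would handle $g_c$. By \eqref{uBdyonOmega1} together with \eqref{J_N}, $g_c(u)=C_3^c(c_o-u)-C_2^c\frac{u}{K_n+u}-C_1^c\frac{u^2}{K_p^2+u^2}$ depends on the scalar $u$ alone. Each summand is $C^1$ on $[0,\infty)$ with derivative uniformly bounded for $u\ge 0$ (indeed $\bigl|\frac{d}{du}\frac{u}{K_n+u}\bigr|\le\frac1{K_n}$ and $\bigl|\frac{d}{du}\frac{u^2}{K_p^2+u^2}\bigr|$ is bounded independently of $u\ge0$), so by the mean value theorem $g_c$ is Lipschitz on $[0,M]$; taking $K_1$ to be the sum of the three Lipschitz constants gives $|g_c(u_1)-g_c(u_2)|\le K_1|u_1-u_2|$ at every point of $\partial\Omega\times[0,T]$.

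Next I would turn to $g_e$. By \eqref{ueBdyonGamma1}, \eqref{J_R} and \eqref{J_S}, $g_e(u,u_e)=J_S-J_R-J_{l,e}$, and I would bound the three terms in turn. The leak term $J_{l,e}=C_3^e(u_e-u)$ is linear, hence Lipschitz in $(u,u_e)$ with constant $C_3^e$. For $J_S=C_2^e\frac{u}{(K_s+u)}\cdot\frac{1}{\phi_m(u_e)}$, the first factor is bounded by $1$ and Lipschitz in $u\ge0$ with constant $1/K_s$; since $\phi_m\ge m/2>0$ everywhere and $\phi_m$ is piecewise $C^1$ with matching values, the second factor is bounded by $2/m$ on $[0,M]$ and Lipschitz there, so the product is Lipschitz and $|J_S(u_1,u_{e1})-J_S(u_2,u_{e2})|\le C(|u_1-u_2|+|u_{e1}-u_{e2}|)$.

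The main obstacle is the RyR term $J_R=C_1^e P(t,u)(u_e-u)$, which is the only place the history dependence through the ODE system enters. Here I would add and subtract $P(t,u_1)(u_{e2}-u_2)$ to split
\[
P(t,u_1)(u_{e1}-u_1)-P(t,u_2)(u_{e2}-u_2)
=P(t,u_1)\bigl[(u_{e1}-u_{e2})-(u_1-u_2)\bigr]+\bigl[P(t,u_1)-P(t,u_2)\bigr](u_{e2}-u_2).
\]
Since $u_1\ge0$ we have $P(t,u_1)\in[0,1]$, so the first term is bounded by $|u_{e1}-u_{e2}|+|u_1-u_2|$; for the second, $|u_{e2}-u_2|\le 2M$ and Lemma~\ref{pu1u2} bounds $|P(t,u_1)-P(t,u_2)|$ by $K\bigl(|u_1(0)-u_2(0)|+\int_0^t|u_1(s)-u_2(s)|\,\mathrm{d}s\bigr)$. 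Collecting the estimates for $J_S$, $J_R$, $J_{l,e}$ and absorbing all constants — which depend only on $M$ and the fixed model parameters — into a single $K_2$ yields the asserted bound for $g_e$. The splitting above is just the standard "Lipschitz estimate for a product," the point being that neither $P$ nor $u_e-u$ can amplify the error since they are bounded by $1$ and $2M$ respectively; all the real difficulty is already encapsulated in Lemma~\ref{pu1u2}.
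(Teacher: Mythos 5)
Your proposal is correct and follows exactly the route the paper intends: the paper states Lemma~\ref{g12u1u2} without proof as a direct consequence of Lemma~\ref{pu1u2}, and your argument supplies the natural missing details — elementary Lipschitz bounds for the Hill-type and leak terms, and the product splitting for $J_R$ that isolates the history-dependent factor $P(t,u_1)-P(t,u_2)$, which is then controlled by Lemma~\ref{pu1u2} and the uniform bounds $P\in[0,1]$, $|u_{e2}-u_2|\le 2M$.
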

\begin{lemma}\label{trace1} \cite{Douglas1973}
	Let $D$ be the domain with appropriate boundary, there exists a positive constant $C_T$ such that for $0<\epsilon\leq 1$
	\begin{equation}
		\|v\|_{L^2(\partial D)}\leq 
		C_T\left(\epsilon\|\nabla v\|_{L^2(D)}+\epsilon^{-1}\|v\|_{L^2(D)}\right), \quad v\in H^1(D).
	\end{equation}
\end{lemma}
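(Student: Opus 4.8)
The plan is to reduce the statement to the standard multiplicative trace inequality and then tune the constant with Young's inequality. First I would establish the following: there is a constant $C$ depending only on $D$ with
\[
\|v\|_{L^2(\partial D)}^2 \le C\left(\|v\|_{L^2(D)}\|\nabla v\|_{L^2(D)} + \|v\|_{L^2(D)}^2\right) \qquad \text{for all } v\in H^1(D).
\]
To prove this, since $\partial D$ is $C^{1,\beta}$ one can pick a vector field $\mathbf{F}\in C^1(\overline{D};\mathbb{R}^n)$ with $\mathbf{F}\cdot n\ge 1$ on $\partial D$ (for instance a $C^1$ extension of the outward unit normal, suitably scaled). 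For $v\in C^1(\overline{D})$ the divergence theorem gives
\[
\int_{\partial D} v^2\, ds \le \int_{\partial D} v^2\,(\mathbf{F}\cdot n)\, ds = \int_D \nabla\cdot(v^2\mathbf{F})\, dx = \int_D \left(2 v\,\nabla v\cdot\mathbf{F} + v^2\,\nabla\cdot\mathbf{F}\right) dx,
\]
and Cauchy--Schwarz together with the bounds $\|\mathbf{F}\|_{L^\infty(D)}$ and $\|\nabla\cdot\mathbf{F}\|_{L^\infty(D)}$ yields the claimed estimate on $C^1(\overline{D})$; density of $C^1(\overline{D})$ in $H^1(D)$ extends it to all of $H^1(D)$.

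Next, for $0<\epsilon\le 1$ I would apply Young's inequality to the cross term,
\[
\|v\|_{L^2(D)}\|\nabla v\|_{L^2(D)} \le \frac{\epsilon^2}{2}\|\nabla v\|_{L^2(D)}^2 + \frac{1}{2\epsilon^2}\|v\|_{L^2(D)}^2,
\]
and, using $1\le \epsilon^{-2}$ to absorb the remaining $\|v\|_{L^2(D)}^2$ term, obtain
\[
\|v\|_{L^2(\partial D)}^2 \le C'\left(\epsilon^2\|\nabla v\|_{L^2(D)}^2 + \epsilon^{-2}\|v\|_{L^2(D)}^2\right),
\]
with $C'$ depending only on $D$. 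Taking square roots and using $\sqrt{a+b}\le \sqrt{a}+\sqrt{b}$ for $a,b\ge 0$ gives exactly the stated inequality with $C_T=\sqrt{C'}$.

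I expect the only genuine obstacle to be the construction of the vector field $\mathbf{F}$ with $\mathbf{F}\cdot n\ge 1$ on $\partial D$, which is where the regularity of the boundary (and of the interface $\Upsilon$) is used; for the $C^{1,\beta}$ domains considered here this is standard, and everything else in the argument is elementary. Since the result is quoted from \cite{Douglas1973}, one may alternatively simply cite it, but the sketch above is the self-contained proof.
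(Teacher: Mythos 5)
Your argument is correct. The paper itself offers no proof of this lemma --- it is quoted directly from \cite{Douglas1973} --- so there is nothing internal to compare against; what you have written is the standard self-contained derivation (multiplicative trace inequality via the divergence theorem applied to $v^2\mathbf{F}$, followed by Young's inequality with the weight $\epsilon^2$ and the observation $1\le\epsilon^{-2}$), and every step checks out. The only imprecision is cosmetic: on a $C^{1,\beta}$ boundary the outward unit normal is merely H\"older continuous, so $\mathbf{F}$ should be taken as a smooth vector field uniformly close to (a continuous extension of) $n$ near $\partial D$, scaled so that $\mathbf{F}\cdot n\ge 1$, rather than literally a ``$C^1$ extension of the normal''; this does not affect the argument, since the divergence-theorem identity only requires $\mathbf{F}$ Lipschitz and the lower bound $\mathbf{F}\cdot n\ge 1$ on $\partial D$. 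The density step and the final square-root manipulation are both fine.
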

\begin{remark}
	Lemma \ref{g12u1u2} together with smooth enough $g_c, g_e$ and \eqref{gcge1} to \eqref{gcge3}
	\begin{eqnarray}
		&&g_c(0)\geq 0, \ g_c(c_o)\leq 0, \ g_c(u)\leq C_c,\text{ for }  u\geq 0 , c_o, C_c>0 \label{gcge1}\\
		&&g_e(0,u_e)\leq 0, \ g_e(u,0)\geq 0, \text{ for }  u, u_e\geq 0 \\ 
		&&-K_5u_e-K_6\leq g_e(u,u_e)\leq K_3u+K_4, \text{ for }  u, u_e,K_i>0, i\geq 3 \label{gcge3}
	\end{eqnarray}
	can also be viewed as conditions for the analysis in following sections. So that the analysis and numerical method can be applied to models with more general interface/membrane fluxes. 
\end{remark}
\section{Existence, Uniqueness and Boundedness}\label{E&U&B}

\subsection{Fundamental Solution}\label{FuS}
For completeness, we recall the definition and properties of the fundamental solution of the parabolic operator, for more information see \cite{Fri1967, Pao1992}. We define the operator $L$ as
$
Lu = {\partial_t u} -\mathcal{D}\Delta u,
$
where $\mathcal{D}>0$ is some coefficient. Then the fundamental solution of $L$ is
$$
\Gamma(t,x;\tau,\xi) =[4\pi \mathcal{D}(t-\tau)]^{-n/2}e^{-\frac{|x-\xi|^2}{4\mathcal{D}|t-\tau|}}.
$$
For any $x, \xi$ in $\mathbb{R}^n, n=2,3$ and $0\leq \tau<t\leq T$, the fundamental solution has bounds
\begin{align}
	|\Gamma(t,x;\tau,\xi)| 
	&\leq
	\frac{K_0}{(t-\tau)^{\mu}}
	\frac{1}{|x-\xi|^{n-2+\mu}},
	\quad 0<\mu<1 \label{Gamma1}
	\\
	\left|
	\frac{\partial\Gamma(t,x;\tau,\xi)}{\partial v(t,x)}
	\right| 
	&\leq
	\frac{K_0}{(t-\tau)^{\mu}}
	\frac{1}{|x-\xi|^{n+1-2\mu-\gamma}},
	\quad 1-\gamma/2<\mu<1 \label{Gamma2}
\end{align} 
where $K_0$ is a constant independent of $(t,x)$ and $(\tau,\xi)$.
Let $D$ be an open bounded domain with $C^{1,\beta}$ boundary.  The second initial boundary value problem is given by
\begin{equation}\label{uomega1_b}
	\left\{
	\begin{aligned} 
		&Lu(t,x) = f(t,x)
		\quad (t,x)\in D\times(0,T]
		\\
		&{\partial_n u} = g(t,x)
		\quad\quad\ \ (t,x)\in \partial D\times (0,T] 
		\\
		&u(0,x) = u_0(x) \quad\quad  x\in \overline{D}
	\end{aligned}
	\right.
\end{equation}
where $f, g, u_0$ are any given functions. The solution to \eqref{uomega1_b} is:  
\begin{align}
	u(t,x) 
	&=
	\int_0^t\int_{\partial D}
	\Gamma(t,x;\tau,\xi)
	\psi(\tau,\xi) d\xi d\tau
	+
	\int_D
	\Gamma(t,x;0,\xi)
	u_0(\xi) d\xi
	\\
	&
	+
	\int_0^t\int_D
	\Gamma(t,x;\tau,\xi)
	f(\tau,\xi) d\xi d\tau,
	\nonumber
\end{align}
where $\psi$ can be obtained from solving the following integral equation: 
\begin{align*}
	\psi(t,x) 
	&=
	2\int_0^t\int_{\partial D}
	\frac{\partial\Gamma(t,x;\tau,\xi)}{\partial v(t,x)}
	\psi(\tau,\xi) d\xi d\tau
	+
	2H(t,x),\\
	H(t,x)
	&=
	\int_{D}
	\frac{\partial\Gamma(t,x;0,\xi)}{\partial v(t,x)}
	u_0(\xi) d\xi
	+
	\int_0^t\int_{D}
	\frac{\partial\Gamma(t,x;\tau,\xi)}{\partial v(t,x)}
	f(\tau,\xi)  d\xi d\tau
	+
	g(t,x). 
	\nonumber
\end{align*}
Then, to get an explicit expression of $\psi$, same as in \cite{Pao1992}, we define
\begin{eqnarray*}
	Q
	:=
	\frac{\partial\Gamma(t,x;\tau,\xi)}{\partial v(t,x)},\quad
	Q_{j+1}
	:=
	\int_0^t\int_{\partial D}
	Q(t,x;s,y)Q_j(s,y;\tau,\xi) dy ds,
\end{eqnarray*}
where $j\geq 1$ and $Q_1 = Q$. 
The solution $\psi(t,x)$ has the explicit form
\begin{eqnarray}\label{psi}
	\psi(t,x)
	=
	2H(t,x)+
	2
	\int_0^t\int_{\partial D}
	R(t,x;\tau,\xi) H(\tau,\xi)
	d\xi d\tau.
\end{eqnarray}
$R(t,x;\tau,\xi)$ is denoted by
\begin{equation}\label{R_01}
	R(t,x;\tau,\xi) = \sum\limits_{j=1}^{\infty}Q_j(t,x;\tau,\xi)
\end{equation}
and $R(t,x;\tau,\xi)$ has the following bound
\begin{equation}\label{R_02}
	|R(t,x;\tau,\xi)| 
	\leq 
	\frac{K_1}{(t-\tau)^{\mu}}
	\frac{1}{|x-\xi|^{n+1-2\mu-\gamma}},
	\quad 1-\gamma/2<\mu<1,
\end{equation}
while $K_1$ does not depend on the variables, see \cite{Fri1967} for a proof. 
%	So that $\psi$ has the explicit form
%	\begin{eqnarray}
	%		\psi(t,x)
	%		=
	%		2H(t,x)+
	%		\int_0^t\int_{\partial D}
	%		[R_0(t,x;\tau,\xi)+R_1(t,x;\tau,\xi)] H(\tau,\xi)
	%		d\xi d\tau.
	%	\end{eqnarray}
\subsection{Wellposedness}\label{WPN}
For brevity we define the parabolic operators as:
$$L_c u = {\partial_t u}-{D_c} \Delta u, \
L_b b = {\partial_t b}-{D_b} \Delta b, \
L_e u_e = {\partial_t u_e}-{D_e}\Delta u_e,$$ 
the corresponding fundamental solutions are $\Gamma_c, \Gamma_b, \Gamma_e$, and $R$ in \eqref{psi} are represented by $R_{c}$, $R_{b}$, $R_{e}$. We denote $C_{1,2}({Q}_c)$ as the space of functions  with two continuous spatial derivatives and one continuous time derivative on $Q_c$. The boundary conditions for $u$ are defined as $B_c u := \partial_n u$ on $\partial \Omega$, $B_e u:=\partial_n u$ on $\Upsilon$.    
In this section, we derive the bounds for solution (if it exits) of \eqref{uomega1}-\eqref{bBdyonOmega2} in Theorem \ref{bounds3}, then show the existence and uniqueness of the solution in Theorem \ref{EU3}. %A one domain problem is analyzed in Appendix \ref{A1}.
\begin{theorem}\label{bounds3}
	Assume $\{u,b,u_e\}$ is a solution of \eqref{uomega1}-\eqref{bBdyonOmega2}, and $u, b \in C(\overline{Q}_c)\cap C_{1,2}({Q}_c)$, $u_e \in C(\overline{Q}_e)\cap C_{1,2}(Q_e)$ with initial conditions $u(0,x), u_{e}(0,x)>0$, $0\leq b(0,x)\leq b^0$. Then for $t\in [0,T]$,   $0\leq b \leq b^0$,  $u, u_e$ are positive and bounded by a constant $M$ which does not depend on $u,b,u_e$. 
\end{theorem}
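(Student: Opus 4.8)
The plan is to establish the bounds by a maximum-principle / comparison argument applied separately to the three equations, handling the nonlinear flux boundary conditions via the structural sign conditions encoded in \eqref{gcge1}--\eqref{gcge3}. I would proceed in the following order.

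First I would handle $b$, since its bounds are needed to close the estimate for $u$. The equation \eqref{bomega1} with the homogeneous Neumann condition \eqref{bBdyonOmega1} is a linear parabolic equation in $b$ with the reaction term $f(b,u) = K_b^-(b^0-b) - K_b^+ b u$ treated as a (for now) known right-hand side. To get $b \ge 0$: note that when $b = 0$ we have $f(0,u) = K_b^- b^0 \ge 0$, so $0$ is a subsolution; the comparison theorem for parabolic equations with Neumann data then gives $b \ge 0$ on $[0,T]$ from $b(0,x) \ge 0$. To get $b \le b^0$: when $b = b^0$ we have $f(b^0,u) = -K_b^+ b^0 u \le 0$ provided $u \ge 0$, so $b^0$ is a supersolution; hence $b \le b^0$ from $b(0,x) \le b^0$. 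This step is conditional on $u \ge 0$, which I address next, so in the final write-up these must be run simultaneously (e.g. on a maximal time interval where $u \ge 0$, or via a bootstrap).

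Second I would show $u > 0$ and $u_e > 0$. For $u_e$, the equation \eqref{ueomega2} is homogeneous with flux \eqref{ueBdyonGamma1}, which equals $-g_e(u,u_e)$ up to sign conventions; the condition $g_e(u,0)\ge 0$ in the remark means that when $u_e = 0$ the flux drives $u_e$ inward (nonnegative), so $0$ is a subsolution and the strong maximum principle gives $u_e > 0$ from $u_{e,0} > 0$. Similarly for $u$: on $\partial\Omega$ the condition $g_c(0) \ge 0$, and on $\Upsilon$ the condition $g_e(0,u_e) \le 0$ (which, after accounting for the normal-direction conventions in \eqref{ueBdyonGamma2}, again corresponds to an inward flux for $u$ when $u = 0$), together with $f(b,u)\big|_{u=0} = K_b^-(b^0 - b) \ge 0$ since $b \le b^0$, make $0$ a subsolution for \eqref{uomega1}; the strong maximum principle then yields $u > 0$ from $u_0 > 0$. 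With $u \ge 0$ in hand the argument for $b$ from the first step is legitimate, closing the bootstrap.

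Third, and this is where I expect the main difficulty, I would derive the upper bound $u \le M$, and then $u_e \le M$. For $u$, use the upper bounds on the fluxes: \eqref{gcge1} gives $g_c(u) \le C_c$ on $\partial\Omega$, and \eqref{gcge3} gives $g_e(u,u_e) \le K_3 u + K_4$ on $\Upsilon$, while the reaction term satisfies $f(b,u) \le K_b^- b^0$ once $b \ge 0$. The trouble is that the bound on the $\Upsilon$-flux involves $u$ itself with a positive coefficient (the RyR/SERCA terms do not have a sign that kills the boundary contribution), so a naive comparison function does not immediately work; I would look for a supersolution of the form $w(t) = (A + B t)e^{\lambda t}$ or solve an auxiliary linear problem $L_c w = K_b^- b^0$, $D_c\partial_n w = C_c$ on $\partial\Omega$, $D_c \partial_n w = K_3 w + K_4$ on $\Upsilon$, $w(0,x) = \|u_0\|_\infty$, whose solution is bounded on $[0,T]$ by standard linear parabolic theory (here the boundary term $K_3 w$ is a benign zeroth-order term on the boundary, handled by the trace Lemma \ref{trace1} in an energy estimate or by the fundamental-solution representation \eqref{psi}--\eqref{R_02}), and then conclude $u \le w \le M$ by comparison. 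Note this requires an a priori bound on $u_e$ entering $g_e$; but $u_e$ solves a homogeneous equation whose only inhomogeneity is the flux on $\Upsilon$, so $\max u_e$ is controlled by $\max_{\Upsilon} u$ over earlier times (via \eqref{ueBdyonGamma1} and \eqref{gcge3}, the flux into $\Omega_e$ is at most $K_3 u + K_4$), giving a coupled but closable system of differential inequalities for $\sup_{\Omega_c} u$ and $\sup_{\Omega_e} u_e$. Resolving this coupling — showing the pair of bounds does not blow up in finite time on $[0,T]$ — via Gronwall applied to the resulting linear system of integral inequalities is the technical heart of the argument. Since all constants ($C_c$, $K_3$, $K_4$, $b^0$, $T$, the parabolic constants $K_0, K_1$) are independent of the particular solution, the resulting $M$ is independent of $\{u,b,u_e\}$, as claimed.
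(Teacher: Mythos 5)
Your proposal follows essentially the same route as the paper: maximum principle/comparison for $0\le b\le b^0$ (conditionally on $u\ge 0$, closed by a bootstrap), the sign conditions $g_c(0)\ge 0$, $g_e(0,u_e)\le 0$, $g_e(u,0)\ge 0$ for positivity of $u$ and $u_e$, and auxiliary linear comparison problems whose solutions are represented via the fundamental solution, leading to a coupled pair of sup-norm inequalities between $u$ and $u_e$. The only cosmetic difference is in the final closing step: the paper exploits that the kernel bounds produce coefficients of order $t^{1-\mu}$, so on a short interval $[0,h]$ the two inequalities have coefficients $\le 1/2$ and can be solved directly, then iterated over finitely many intervals, whereas you propose a Gronwall argument on the integral inequalities — both are workable.
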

\begin{proof} 
	Equation \eqref{bomega1} can be written as
	$L_b b +K_b^-b+K_b^+ bu= K_b^-b^0$,
	the first observation from it and \eqref{bBdyonOmega1} is that, by maximum principle (see Theorem 1.4-1.5, Chapter 2 in \cite{Pao1992}), and as long as $u\geq 0$, we have $b\geq 0$. Next, since  $b^0$ is the solution of 
	$L_b b = K_b^-(b^0-b)$,
	by comparison theorem, see \cite{Fri1967, Pao1992}, we  get
	$b\leq b^0$. The initial values of $u, u_e$ are positive, so if $u, u_e$ are not always positive on $\overline{Q}_c$, then suppose one of them, e.g., $u$ becomes $0$ no later than $u_e$ and define $\underline{t} = \min\{\tau\geq 0| u(\tau,\underline{x})=0, \underline{x}\in \overline{\Omega}_c\}$
	so that $u(\underline{t},\underline{x}) = 0$. Equation \eqref{uomega1} can be written as $L_c u +K_b^+ bu= K_b^-(b^0-b)$, where $0\leq b\leq b^0$ on $[0,\underline{t}]$, by maximum principle and $g_c(0)\geq 0$, $-g_e(0,u_e)\geq 0$, we conclude that $\underline{t}$ does not exist. The positivity of $u_e$ can be obtained similarly.
	
	As $u$, $u_e$, $b$ are non-negative and $b$ is bounded,  we can see that $u$ is bounded by $u_e$. The following equation \eqref{u-u_0} is used to get the bound of $u$:
	\begin{equation}\label{u-u_0}
		\left\{
		\begin{aligned} 
			&L_c w = 
			K_b^-b^0,  
			\\
			&B_cw = C_3c_o/D_c,\ B_ew = (C_1^{e}+C_3^e)u_e/D_c, 
			\\
			&
			w(0,x) = 0.
		\end{aligned}
		\right.
	\end{equation}
	We define a constant $w_0 > u(0,x)$ for $x\in\overline{\Omega}_c,$ then by comparison theorem, we have $u(t,x)\leq w(t,x)+w_0$ on $\overline{\Omega}_c\times[0,T]$.
	Just like \eqref{uomega1_b} in Section \ref{FuS}, the solution $w$ can be obtained as 
	\begin{align*}
		w(t,x) 
		&=
		\int_0^t\int_{\partial \Omega_c}
		\Gamma_c(t,x;\tau,\xi)
		\psi_w(\tau,\xi) d\xi d\tau
		+
		K_b^-b^0\int_0^t\int_{\Omega_c}
		\Gamma_c(t,x;\tau,\xi)
		d\xi d\tau, \nonumber
		\\
		\psi_w(t,x)
		&=
		2\int_0^t\int_{\partial \Omega_c}
		R_{c}(t,x;\tau,\xi) H_w(\tau,\xi)
		d\xi d\tau
		+ 2H_w(t,x),\nonumber\\
		H_w(t,x)
		&=
		K_b^-b^0\int_0^t\int_{\Omega_c}
		\frac{\partial\Gamma_c(t,x;\tau,\xi)}{\partial v(t,x)}
		d\xi d\tau 
		+
		g_w(t,x),
	\end{align*}
	where $g_w(t,x) = B_c w$ on $\partial \Omega$ and $g_w(t,x) = B_ew$ on ${\Upsilon}$.
	Let $t\leq h,$ $h>0$, and define the norm
	$\|u_e\|_h = \sup\{|u_e(t,x)|; 0\leq t\leq h, x\in \overline{\Omega}_e\}$, by the properties of $\Gamma_c$ as in \eqref{Gamma1}-\eqref{Gamma2}, $R_{c}$ as in  \eqref{R_01}-\eqref{R_02},  we have 
	\begin{align*}
		|H_w(t,x)|
		&\leq
		C_1^w
		t^{1-\mu}
		+
		(C_1^{e}+C_3^e)\|u_e\|_h/D_c
		+C_3c_o/D_c,
		\\
		|\psi_w(t,x)|
		&\leq
		C_2^w (t^{1-\mu}+t^{2-2\mu}+c_o)
		+
		C_3^w(1+t^{1-\mu})\|u_e\|_h,
		\\
		|w(t,x)| 
		&\leq
		C_4^w
		(t^{1-\mu}
		+t^{2-2\mu}
		+t^{3-3\mu}
		)
		+
		C_5^w(t^{1-\mu}+t^{2-2\mu})\|u_e\|_h,
	\end{align*}
	so that on $\overline{\Omega}_c\times [0,h]$, $u$ can be bounded as
	\begin{equation}\label{ubu_e}
		|u(t,x)|\leq C_4^w
		(t^{1-\mu}
		+t^{2-2\mu}
		+t^{3-3\mu}
		)
		+
		C_5^w(t^{1-\mu}+t^{2-2\mu})\|u_e\|_h+w_0.
	\end{equation}
	Here, $C_1^w$ to $C_5^w$ do not depend on $(t,x)$ and $u_e$.
	
	We then prove that $u_e$ is bounded by $u$ from the following equation \eqref{u_e-u_e0}:
	\begin{equation}\label{u_e-u_e0}
		\left\{
		\begin{aligned} 
			&L_e v = 
			0, 
			\\
			&\partial_n v = 
			\left(
			C_1^{e}+{2C_2^e}/{(K_s m)}+C_3^e
			\right)
			u /D_e,
			\\
			&
			v(0,x) = 0, 
		\end{aligned}
		\right.
	\end{equation}
	where we define $v_0$ as a constant and $v_0 > u_e(0,x)$ for $x\in\overline{\Omega}_e$. By the comparison theorem, $u_e(t,x)\leq v(t,x)+v_0$ on $\overline{\Omega}_e\times[0,T]$.
	The solution $v$ can be obtained as
	\begin{align*}
		v(t,x) 
		&=
		\int_0^t\int_{\Upsilon}
		\Gamma_e(t,x;\tau,\xi)
		\psi_v(\tau,\xi) d\xi d\tau,
		\\
		\psi_v(t,x)
		&=
		2\int_0^t\int_{\Upsilon}
		R_{e}(t,x;\tau,\xi) g_v(\tau,\xi)
		d\xi d\tau+2g_v(t,x),\nonumber
	\end{align*}
	where 
	$g_v(t,x) = \partial_n v$.
	When defining the norm
	$\|u\|_h = \sup\{|u(t,x)|; 0\leq t\leq h, x\in \overline{\Omega}_c\}$, and by the properties of $\Gamma_e(t,x;\tau,\xi)$ from \eqref{Gamma1}-\eqref{Gamma2}, $R_{e}$ from \eqref{R_01}-\eqref{R_02}, we get
	\begin{eqnarray*}
		|\psi_v(t,x)|
		\leq
		C_1^v(1+t^{1-\mu})\|u\|_h, 
		\quad
		|v(t,x)| 
		\leq
		C_2^v(t^{1-\mu}+t^{2-2\mu})\|u\|_h,
	\end{eqnarray*}
	so that on $\overline{\Omega}_e\times [0,h]$, $u_e$ can be bounded by
	\begin{equation}\label{u_ebu}
		|u_e(t,x)|\leq
		C_2^v(t^{1-\mu}+t^{2-2\mu})\|u\|_h+v_0,
	\end{equation}
	where $C_1^v$, $C_2^v$ do not depend on $(t,x)$ and $u$.
	With \eqref{ubu_e}, \eqref{u_ebu}, and $h$ small enough, we have
	$\|u\|_h
	\leq
	\frac12\|u_e\|_h+w_0+\frac34C_4^w$, 
	$\|u_e\|_h 
	\leq
	\frac12\|u\|_h+v_0
	$,
	so that 
	\begin{eqnarray*}
		\|u\|_h
		\leq
		\frac23v_0+\frac43w_0+C_4^w,
		\quad
		\|u_e\|_h 
		\leq
		\frac43v_0+\frac23w_0+\frac12C_4^w.
	\end{eqnarray*}
	Then, on time interval $[h,2h]$, let 
	$
	w_1
	=
	\frac23v_0+\frac43w_0+C_4^w$, 
	$
	v_1 
	=
	\frac43v_0+\frac23w_0+\frac12C_4^w,
	$
	by \eqref{u-u_0} and \eqref{u_e-u_e0}, we have
	$u(t,x)\leq w(t,x)+w_1$ on $\overline{\Omega}_c\times[h,2h]$ and
	$u_e(t,x)\leq v(t,x)+v_1$ on $\overline{\Omega}_e\times[h,2h]$.
	With the same coefficients as in the previous step, we get similar bounds of $u, u_e$ for $t\in [h,2h]$. Since $h$ is fixed, with finite steps, we can reach the conclusion that $u$ and $u_e$ are bounded for $t\in [0,T]$.
\end{proof}

\begin{theorem}\label{EU3}
	Suppose $u(0,x), u_e(0,x) >0$ and $b^0\geq b(0,x)\geq 0$ are continuously differentiable in $\Omega_c$ or $\Omega_e$, then there exists a unique solution $\{u, b, u_e\}$ for \eqref{uomega1}-\eqref{bBdyonOmega2}. 
\end{theorem}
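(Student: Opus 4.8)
The plan is to establish local existence by a Picard-type fixed point argument built on the integral (potential) representation of the parabolic problems from Section~\ref{FuS}, then bootstrap to global existence on $[0,T]$ using the a priori bounds from Theorem~\ref{bounds3}, and finally prove uniqueness by a Gronwall estimate. Concretely, on a short time interval $[0,h]$ I would set up the solution map $\Phi$ acting on triples $(u,b,u_e)$ in the Banach space $X_h = C(\overline{Q}_c^h)\times C(\overline{Q}_c^h)\times C(\overline{Q}_e^h)$ (with the sup-norm $\|\cdot\|_h$), restricted to the closed convex subset where $u,u_e$ stay positive and bounded by the constant $M$ of Theorem~\ref{bounds3} and $0\le b\le b^0$. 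Given an input triple, one freezes the nonlinearities: the reaction term $f(b,u)$ becomes a known forcing in $Q_c$, the ODE system~\eqref{matrixf1} is solved (its solution depends on the input $u$ on $\Upsilon$, and by Lemma~\ref{pu1u2} this dependence is controlled), so $P(t,u)$ and hence all the fluxes $J_R,J_S,J_{l,e},J_N,J_P,J_{l,p}$ become known boundary data $g_c,g_e$ via the definitions before Lemma~\ref{g12u1u2}. One then \emph{defines} the output triple by the potential formulas of Section~\ref{FuS}: $\tilde u$ solves $L_c\tilde u = f(b,u)$ with Neumann data $g_c$ on $\partial\Omega$ and $g_e$-type data on $\Upsilon$ via the density $\psi$ and kernel $R_c$; similarly $\tilde b$ (with homogeneous flux) and $\tilde u_e$.

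The key steps, in order: (i) Show $\Phi$ maps the invariant set into itself — positivity and the upper bound $M$ follow by re-running the comparison/maximum-principle arguments of Theorem~\ref{bounds3} for each fixed input, while the regularity $C(\overline{Q})$ of the output comes from the standard mapping properties of the single- and volume-layer potentials with the kernel bounds \eqref{Gamma1}--\eqref{R_02}. (ii) Show $\Phi$ is a contraction on $[0,h]$ for $h$ small. This is where Lemmas~\ref{pu1u2} and~\ref{g12u1u2} do the work: the difference of two outputs is represented by the same potential operators applied to the difference of the data, and each data difference is Lipschitz in the inputs — $|f(b_1,u_1)-f(b_2,u_2)|\le C(\|u_1-u_2\|_h+\|b_1-b_2\|_h)$ on the bounded set, and $|g_c(u_1)-g_c(u_2)|$, $|g_e(\cdot)-g_e(\cdot)|$ are bounded by Lemma~\ref{g12u1u2} (the memory integral term $\int_0^t|u_1-u_2|\,{\rm d}s\le h\|u_1-u_2\|_h$ only helps for small $h$). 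The kernel estimates \eqref{Gamma1}--\eqref{R_02} give that the potential operators have operator norm $O(h^{1-\mu})$ or $O(h^{1-\mu}+h^{2-2\mu})$ in the sup-norm — exactly the powers of $h$ appearing in the bounds inside the proof of Theorem~\ref{bounds3} — so choosing $h$ small makes the Lipschitz constant of $\Phi$ strictly less than $1$. Banach's fixed point theorem then yields a unique fixed point, which by construction is a classical solution on $[0,h]$; elliptic/parabolic interior regularity (Schauder theory, using the $C^{1,\beta}$ boundary and interface) upgrades it to $C_{1,2}$.

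(iii) Globalize: since $h$ depends only on $M$, $b^0$ and the geometry — not on the solution itself, because Theorem~\ref{bounds3} bounds \emph{any} solution by the same $M$ on all of $[0,T]$ — the local solution can be restarted at $t=h$ with initial data $(u(h,\cdot),b(h,\cdot),u_e(h,\cdot))$, which still satisfies the hypotheses ($u,u_e>0$ by Theorem~\ref{bounds3}, $0\le b\le b^0$), and after finitely many steps of length $h$ one covers $[0,T]$. (iv) Uniqueness on $[0,T]$: given two solutions, subtract the potential representations, use the Lipschitz bounds above together with the kernel estimates to obtain $\|e(t)\|\le C\int_0^t \|e(s)\|\,{\rm d}s$ for the combined error $e=(u_1-u_2,b_1-b_2,u_{e1}-u_{e2})$ (the ODE memory term contributes another $\int_0^t$, still Gronwall-compatible), whence $e\equiv 0$.

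The main obstacle I expect is step~(i)–(ii) being genuinely coupled through the interface: the output $\tilde u$ on $\Upsilon$ feeds the ODE that produces $g_e$, which feeds $\tilde u_e$, whose trace on $\Upsilon$ feeds back into $g_e$ for $\tilde u$ — so the invariance and contraction estimates for the $u$- and $u_e$-components cannot be done independently. The resolution, mirroring the $\|u\|_h\le \tfrac12\|u_e\|_h+\cdots$, $\|u_e\|_h\le\tfrac12\|u\|_h+\cdots$ bookkeeping already used in Theorem~\ref{bounds3}, is to absorb the cross terms: for $h$ small the coupling coefficients in front of $\|u_e\|_h$ and $\|u\|_h$ are $<1$, so a $2\times2$ linear system in $(\|u_1-u_2\|_h,\|u_{e1}-u_{e2}\|_h)$ closes with a contraction constant $<1$. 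Care is also needed that the trace $g_e$ depends on $u_e$ only through a genuinely Lipschitz (indeed linear, via $J_{l,e}$ and $J_R$) term, which Lemma~\ref{g12u1u2} already records, and that $\phi_m$ is bounded below by $m/2$ so $J_S$ is smooth and Lipschitz on the positive bounded range — both are consequences of the truncations built into the model, so no new estimates are required.
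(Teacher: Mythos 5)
Your overall strategy is the same as the paper's: represent each frozen linear subproblem by the volume and single-layer potentials of Section \ref{FuS}, use the kernel bounds \eqref{Gamma1}--\eqref{R_02} together with Lemmas \ref{pu1u2} and \ref{g12u1u2} to show the solution map is a contraction in the sup-norm for $h$ small, obtain the fixed point on $[0,h]$, and restart finitely many times using the a priori bound $M$ from Theorem \ref{bounds3}. The contraction bookkeeping, the treatment of the memory term $\int_0^t|u_1-u_2|\,{\rm d}s\le h\|u_1-u_2\|_h$, and the globalization are all as in the paper.

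The one genuine gap is your step (i). You propose to run the iteration on the invariant set $\{0<u,u_e\le M,\ 0\le b\le b^0\}$ and claim invariance follows ``by re-running the comparison/maximum-principle arguments of Theorem \ref{bounds3} for each fixed input.'' Those arguments do not transfer to the frozen problem: positivity of $u$ in Theorem \ref{bounds3} rests on the sign conditions $g_c(0)\ge 0$ and $-g_e(0,u_e)\ge 0$, i.e.\ on the flux evaluated at the \emph{solution itself} where it touches zero. In the frozen problem the Neumann data is $g_c(w)$, $g_e(w,w_e)$ for the \emph{input} $(w,w_e)$ — a fixed function of $(t,x)$ that can be negative (e.g.\ $g_c(w)=J_{l,p}-J_N-J_P$ is negative wherever $w$ is large) — so the output of the linear problem need not remain nonnegative, and the invariance claim fails as stated. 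Without an invariant set the contraction cannot close, because the nonlinearities ($bu$ in $f$, $u^4$ and $u^3$ in the ODE coefficients, the Hill terms) are only locally Lipschitz. The paper resolves this differently: it introduces the cutoff $\phi(x)=\max(0,\min(x,M))$ (smoothed in Section \ref{NLP1LA}), replaces $u,b,u_e$ by $\phi(u),\phi(b),\phi(u_e)$ inside $f$, $g_c$, $g_e$, runs the contraction on the whole space of continuous functions with no invariance requirement, and only afterwards checks (by rerunning Theorem \ref{bounds3} for the modified system) that the fixed point obeys the original bounds, so the cutoff is never active and the fixed point solves the original system. Your proof becomes correct if you replace the invariant-set device by this truncation; as written, step (i) is unsupported.
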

\begin{proof}
	From Theorem \ref{bounds3}, we can get the bounds of $u, u_e$ and $b$ if the solution $\{u, b, u_e\}$ exits and its components are smooth. Let $M$ be the upper bound of those three. We define $\phi(x) = \max(0,\min(x,M))$, but $\phi$ can be smoother if needed, see Section \ref{NLP1LA}.	Then we change $u, b$ to be $\phi(u), \phi(b)$ in $f(b, u)$ for equations \eqref{uomega1}-\eqref{bomega1}, and replace $u, u_e$ by $\phi(u), \phi(u_e)$ in the right hand sides of \eqref{uBdyonOmega1}-\eqref{ueBdyonGamma1}. We assert that the modified problem has a unique solution and the solution has the same bounds. Thus, it is the same solution of the original system \eqref{uomega1}-\eqref{bBdyonOmega2}.  Define the map $T$ as $\{u,b,u_e\} = T\{w,w_b,w_e\}$ for the modified problem 
	\begin{equation}\label{ube_w}
		\left\{
		\begin{aligned} 
			&L_c u = 
			f(\phi(w_b),\phi(w)),  \\
			&L_b b  =
			f(\phi(w_b),\phi(w)), \\
			&L_e u_e = 0, \\
			&B_cu  = g_c(\phi(w))/D_c, \ B_eu = -g_e(\phi(w),\phi(w_e))/D_c,\\
			&\partial_n u_e= g_e(\phi(w),\phi(w_e))/D_e,\ \partial_n b=0,
		\end{aligned} 
		\right.
	\end{equation}
	where $\{w,w_b,w_e\}$ are given functions, $\{u,b,u_e\}$ is the solution of \eqref{ube_w}.
	Next we show $T$ is a contraction map if $t\leq h$ for $h$ small enough. To prove this, let the entries of $\{w_1,w_{b,1},w_{e,1}\}$, $\{w_2,w_{b,2},w_{e,2}\}$ be Hölder continuous functions and $\{u_1,b_1,u_{e,1}\} = T\{w_1,w_{b,1},w_{e,1}\}$, 
	$\{u_2,b_2,u_{e,2}\} = T\{w_2,w_{b,2},w_{e,2}\}$. Then we define the norms
	$$
	\|v\|_h = \sup\{|v(t,x)|; 0\leq t\leq h, x\in \overline{D}\}, \
	\|\{v,v_{b},v_{e}\}\|_h =  \|v\|_h+\|v_b\|_h+\|v_e\|_h,
	$$ 
	where $\overline{D}$ can be $\overline{\Omega}_c$ or $\overline{\Omega}_e$. Let $v = u_1-u_2, v_b = b_1-b_2, v_e = u_{e,1}-u_{e,2}$, and $q=w_1-w_2$, $q_b=w_{b,1}-w_{b,2}$, $q_e=w_{e,1}-w_{e,2}$. Since the system \eqref{ube_w} is fully decoupled, $v, v_b, v_e$ can be solved separately.  We start with $v$ in \eqref{v_c}
	\begin{equation}\label{v_c}
		\left\{
		\begin{aligned} 
			&L_c v = 
			- K_b^+q_b-K_b^+(\phi(w_{b,1})\phi(w_1) - \phi(w_{b,2})\phi(w_2)),
			\\
			&B_cv  = ( g_c(\phi(w_1))-g_c(\phi(w_2)) )/D_c,
			\\ 
			&B_ev  = ( g_e(\phi(w_1),\phi(w_{e,1}))-g_e(\phi(w_2),\phi(w_{e,2}))
			)/D_c,
			\\
			&v(0,x) = 0,
		\end{aligned}
		\right.
	\end{equation}
	and by Lemma \ref{g12u1u2}, we have
	\begin{align*}
		|g_c(\phi(w_1))-g_c(\phi(w_2))| &\leq  K_1|q|, \\
		|g_e(\phi(w_1),\phi(w_{e,1}))-g_e(\phi(w_2),\phi(w_{e,2}))|&\leq K_2\left(|q|+|q_e|+\int_0^t|q(s)|{\rm d}s\right).
	\end{align*}
	Further, let $f_v(t,x) = - K_b^+q_b-K_b^+(\phi(w_{b,1})\phi(w_1) - \phi(w_{b,2})\phi(w_2))$, $g(t,x) = B_cv$ on $\partial \Omega$ and $g(t,x) = B_ev$ on ${\Upsilon}$. The solution of \eqref{v_c} is
	\begin{align*}
		&v(t,x) 
		=
		\int_0^t\int_{\partial \Omega_c}
		\Gamma_c(t,x;\tau,\xi)
		\psi_v(\tau,\xi) d\xi d\tau
		+
		\int_0^t\int_{\Omega_c}
		\Gamma_c(t,x;\tau,\xi)
		f_v(\tau,\xi) d\xi d\tau,
		\\
		&\psi_v(t,x)
		=
		2H_v(t,x)+
		2\int_0^t\int_{\partial \Omega_c}
		R_{c}(t,x;\tau,\xi) H_v(\tau,\xi)
		d\xi d\tau,\\
		&H_v(t,x)
		=
		\int_0^t\int_{\Omega_c}
		\frac{\partial\Gamma_c(t,x;\tau,\xi)}{\partial v(t,x)}
		f_v(\tau,\xi)  d\xi d\tau
		+
		g(t,x).
	\end{align*}
	By the properties of $\Gamma_c, R_{c}$, and $t\leq h$, we have
	\begin{align*}
		|H_v(t,x)|
		&\leq
		C_1^v
		(t^{1-\mu}
		+1
		+
		t)(\|q\|_h+\|q_b\|_h+\|q_e\|_h),
		\\
		|\psi_v(t,x)|
		&\leq
		C_2^v
		(t^{1-\mu}
		+1
		+t
		+t^{2-\mu}
		+t^{2-2\mu}
		)(\|q\|_h+\|q_b\|_h+\|q_e\|_h),\\
		|v(t,x)| 
		&\leq
		C_3^v
		(t^{1-\mu}
		+t^{2-\mu}
		+t^{2-2\mu}
		+t^{3-2\mu}
		+t^{3-3\mu}
		)(\|q\|_h	+\|q_b\|_h+\|q_e\|_h), \nonumber
	\end{align*}
	where $C_1^v, C_2^v, C_3^v$ do not depend on $q, q_b, q_e$ or $(t,x)$.
	So we can choose $h$ small enough such that
	$\|v\|_h\leq 1/6(\|q\|_h+\|q_b\|_h+\|q_e\|_h)$. 
	Similarly, we can obtain the bounds of $v_b, v_e$ as
	$\|v_b\|_h\leq 1/6(\|q\|_h+\|q_b\|_h)$, 
	$\|v_e\|_h\leq 1/6(\|q\|_h+\|q_e\|_h)$. 
	Summing these terms we can show that $T$ is a contraction map
	\begin{eqnarray*}
		\|T\{w_1,w_{b,1},w_{e,1}\}-T\{w_2,w_{b,2},w_{e,2}\}\|_h
		\leq 
		\frac12
		\|\{w_1, w_{b,1}, w_{e,1}\}
		-
		\{w_2, w_{b,2}, w_{e,2}\}\|_h,
	\end{eqnarray*}
	where $t\in [0,h].$ By iteration, we can get the unique solution $\{u,b,u_e\}$ on $[0,h]$. Then, choosing
	$u(h,x), b(h,x), u_e(h,x)$ as the initial value, and repeating the steps above, the existence and uniqueness of the solution on $[0,T]$ can be obtained. Following the proof in Theorem \ref{bounds3}, it's easy to see the bounds of the solution for the modified system are the same as the original system. For regularity of the solution, we refer to \cite{Fri1967} for more information.
\end{proof}  
%\section{A Nonlinear Problem with Variational Form}\label{NPV}
\section{Galerkin Projection and the Error Estimates}\label{errorGP}
In this section, we define a nonlinear problem \eqref{NLP1} in variational form, which is used in Section \ref{GP} to define the Galerkin projection of $u, u_e$. The wellposedness of \eqref{NLP1} is proved In Section \ref{NLP1LA}. From Section \ref{WPN}, we get the bounds of the exact solution $\{u, b, u_e\}$ for \eqref{uomega1}-\eqref{bBdyonOmega2}, the bounds are used to define the function $\phi(\cdot)$, such that $\phi(u)=u$, $\phi(b)=b$ and $\phi(u_e)=u_e$ when $u, b, u_e$ are within the bounds. Since $b$ is not coupled with $u_e$, we can use the normal Galerkin projection for it, which will be given later. We now replace $u$ in $J_R$, $J_S$, $J_N$ by $\phi(u)$ and $u_e$ in $J_R$ by $\phi(u_e)$, see Section \ref{NLP1LA} for a smoother $\phi$. For brevity, we denote 
$\bar{g}_c(u)$ and $\bar{g}_e(u,u_e)$ as the modified nonlinear boundary conditions. Then the variational form is given by   
\begin{equation}\label{NLP1}
	\left\{
	\begin{aligned} 
		&a_c(u,v)+\lambda(u,v) 
		= <\bar{g}_c(u),v>_{\partial\Omega} - <\bar{g}_e(u,u_e),v>_{\Upsilon},\\
		&a_e(u_e,v_e)+\lambda(u_e,v_e) 
		= <\bar{g}_e(u,u_e),v_e>_{\Upsilon},
	\end{aligned}
	\right.
\end{equation}
where $a_c(u,v) = (D_c\nabla u,\nabla v)_{\Omega_c}$, $a_e(u_e,v_e) = (D_e\nabla u_e,\nabla v_e)_{\Omega_e}$.  
The problem is to find solutions $u\in L^\infty(0,T;H^1(\Omega_c))$, $u_e\in L^\infty(0,T;H^1(\Omega_e))$, for any $v\in H^1(\Omega_c)$, $v_e\in H^1(\Omega_e)$ and $\lambda >0$ large enough. 

\subsection{Wellposedness of the Variational Problem}\label{NLP1LA}
The existence and uniqueness for the solution of \eqref{NLP1} are proved in this section, well-posedness of the Galerkin Projection \eqref{NLPJ} can be obtained similarly. 

We define a smoother function $\phi(\cdot)$ which is used in the variational problem
$$
\phi(x) = 
\left\{
\begin{aligned} 
	&-a, & {\rm \ if\ } &x\leq -a\\
	&\frac{3 x^5}{a^4}+\frac{7 x^4}{a^3}+\frac{4 x^3}{a^2}+x, 
	& {\rm \ if\ } &-a <x<0\\ 
	&x, & {\rm \ if\ } &0\leq x \leq M\\ 
	&\frac{3 (x-M)^5}{a^4}-\frac{7 (x-M)^4}{a^3}+\frac{4 (x-M)^3}{a^2}+x, & {\rm \ if\ } &M< x \leq M+a\\ 
	&M+a, & {\rm \ if\ } &x>M+a
\end{aligned}
\right.
$$
where $a>0, M >0$, $a$ is small enough. 
\begin{lemma}\label{NLP1L} 
	The nonlinear problem \eqref{NLP1} has a unique solution, provided the modified conditions \eqref{uBdyonOmega1}-\eqref{ueBdyonGamma1}.
\end{lemma}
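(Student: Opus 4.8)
The plan is to establish existence and uniqueness of the solution to the coupled variational problem \eqref{NLP1} by a fixed-point argument on the pair $(u,u_e)$, using the fact that after the truncation $\phi(\cdot)$ all the nonlinear flux terms $\bar g_c$, $\bar g_e$ are globally Lipschitz (this follows from Lemma \ref{g12u1u2}, since $\phi$ maps $H^1$ into bounded functions and is itself Lipschitz) and uniformly bounded. First I would recast \eqref{NLP1} as a problem of the form: given $(w,w_e)$, define $(u,u_e)$ to be the solution of the \emph{linear} variational problem obtained by freezing the $u$-argument inside $\bar g_c$ and $\bar g_e$ at $(w,w_e)$, i.e. with right-hand sides $<\bar g_c(w),v>_{\partial\Omega}-<\bar g_e(w,w_e),v>_\Upsilon$ and $<\bar g_e(w,w_e),v_e>_\Upsilon$. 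For this linear problem the bilinear form $a_c(u,v)+\lambda(u,v)$ on $H^1(\Omega_c)$ is bounded and, for $\lambda>0$, coercive, and likewise for $a_e+\lambda$ on $H^1(\Omega_e)$; the right-hand side is a bounded linear functional because the trace operator $H^1\to L^2(\partial D)$ is continuous (Lemma \ref{trace1}) and $\bar g_c(w),\bar g_e(w,w_e)\in L^2$ by the boundedness of $\phi$. Hence by Lax--Milgram the map $S:(w,w_e)\mapsto(u,u_e)$ is well defined on $H^1(\Omega_c)\times H^1(\Omega_e)$.

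Next I would show $S$ is a contraction for $\lambda$ large. Taking two inputs $(w^1,w_e^1)$, $(w^2,w_e^2)$ with outputs $(u^1,u_e^1)$, $(u^2,u_e^2)$, subtract the equations, test the first with $v=u^1-u^2$ and the second with $v_e=u_e^1-u_e^2$, and add. The left sides give $D_c\|\nabla(u^1-u^2)\|_{L^2(\Omega_c)}^2+\lambda\|u^1-u^2\|_{L^2(\Omega_c)}^2$ plus the analogous $u_e$ terms. On the right, the differences of the flux terms are estimated by the Lipschitz bounds of Lemma \ref{g12u1u2}: each difference is controlled by $|w^1-w^2|$, $|w_e^1-w_e^2|$ and $\int_0^t|w^1-w^2|\,ds$ on the appropriate boundary piece. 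Using the trace inequality (Lemma \ref{trace1}) with a small parameter $\epsilon$ to absorb the $\|\nabla\cdot\|_{L^2}$ factors of the \emph{output} into the left side, and the remaining $L^2$-boundary norms of the output into the $\lambda$-term, what is left is $C(\epsilon,\lambda)\big(\|w^1-w^2\|_{L^2(\Omega_c)}^2+\|w_e^1-w_e^2\|_{L^2(\Omega_e)}^2\big)$ (the time-integral term only helps, being lower order). Choosing $\epsilon$ small and then $\lambda$ large makes the constant $<1$, so $S$ is a strict contraction on $L^\infty(0,T;L^2(\Omega_c))\times L^\infty(0,T;L^2(\Omega_e))$, and Banach's fixed point theorem gives a unique fixed point, which is the solution of \eqref{NLP1}; the a priori $H^1$ bound of the fixed point (uniform in $t$) then follows by testing \eqref{NLP1} itself with $v=u$, $v_e=u_e$ and using coercivity plus the uniform bound on $\bar g_c,\bar g_e$, giving membership in $L^\infty(0,T;H^1)$.

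The step I expect to be the main obstacle is handling the nonlocal-in-time term $\int_0^t|w^1(s)-w^2(s)|\,ds$ that appears in the Lipschitz estimate for $\bar g_e$ (inherited from the ODE system for $P(t,u)$ via Lemma \ref{pu1u2}): this couples all times, so a pointwise-in-$t$ contraction argument does not close immediately. I would deal with this by working in $C([0,h];L^2)$ for a short time $h$, where the integral term contributes a factor $O(h)$ and is dominated by the instantaneous terms, obtaining the contraction on $[0,h]$ first and then bootstrapping over successive intervals $[h,2h],\dots$ exactly as in the proof of Theorem \ref{EU3}; since $h$ depends only on $M$ and the fixed constants, finitely many steps cover $[0,T]$. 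A secondary technical point is verifying that the regularized $\phi$ is genuinely $C^1$ (indeed $C^2$) with bounded derivative at the knots $x=0,M$ — a direct check of the polynomial pieces — so that composition with $u\in H^1$ stays in $H^1$ and the Lipschitz constants in Lemma \ref{g12u1u2} apply verbatim to $\bar g_c,\bar g_e$.
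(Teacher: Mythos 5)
Your overall strategy is the same as the paper's: both freeze the arguments of $\bar{g}_c,\bar{g}_e$ at the previous iterate, solve the resulting linear coercive problem, and show the iteration contracts using Lemma \ref{g12u1u2}, Lemma \ref{trace1} and a large $\lambda$. However, there is a genuine gap in the norm you contract in. After testing with the output difference and absorbing its gradient and boundary terms into the left-hand side, what remains on the right is controlled by the \emph{boundary} norms of the input difference, namely $\|w^1-w^2\|^2_{L^2(\partial\Omega)}$, $\|w^1-w^2\|^2_{L^2(\Upsilon)}$, $\|w_e^1-w_e^2\|^2_{L^2(\Upsilon)}$ and the time integral of the interface term, because the Lipschitz bounds of Lemma \ref{g12u1u2} are pointwise on $\partial\Omega$ and $\Upsilon$. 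Your claimed reduction to $C(\epsilon,\lambda)\left(\|w^1-w^2\|^2_{L^2(\Omega_c)}+\|w_e^1-w_e^2\|^2_{L^2(\Omega_e)}\right)$ does not follow: there is no inequality dominating an $L^2$ boundary norm by an interior $L^2$ norm (Lemma \ref{trace1} goes the other way and costs a gradient of the \emph{input} difference), so a strict contraction on $L^\infty(0,T;L^2(\Omega_c))\times L^\infty(0,T;L^2(\Omega_e))$ cannot be closed as written.

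Two repairs are available. The paper's choice is to measure the contraction exactly in the quantities that appear on the right, i.e.\ in $\|r\|^2_{L^\infty(0,T;L^2(\partial\Omega))}+\|r\|^2_{L^\infty(0,T;L^2(\Upsilon))}+\|r_e\|^2_{L^\infty(0,T;L^2(\Upsilon))}$: by Lemma \ref{trace1} and $\lambda$ large, the coercive left-hand side dominates $\alpha C_1$ times the boundary norms of the output for any prescribed $\alpha>1$, which yields the factor $1/\alpha$ directly (see \eqref{eg5}--\eqref{eg7}); the $H^1$ bound is then read off from \eqref{eg4} afterwards. Alternatively you can contract in $L^\infty(0,T;H^1(\Omega_c))\times L^\infty(0,T;H^1(\Omega_e))$, applying Lemma \ref{trace1} to the input difference as well and balancing $\epsilon$ against $\lambda$. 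Note also that your short-time-plus-bootstrap treatment of the memory term $\int_0^t|w^1(s)-w^2(s)|\,{\rm d}s$ is not wrong but unnecessary here: taking the $L^\infty$ norm over all of $[0,T]$ merely produces a factor $T$ inside the constant $C_1$, which is then beaten by choosing $\lambda$ (equivalently $\alpha$) large, so the contraction is obtained on the whole interval in one step.
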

\begin{proof}
	The method we use here is adapted from Appendix in \cite{calabro2013}. We try to get the exact solution by iteration, let $u^0\in H^1(\overline{Q}_c)$, $u_e^0\in H^1(\overline{Q}_e)$ be given functions and $n\geq 0$, then we have
	\begin{align}
		&a_c(u^{n+1},v)+\lambda(u^{n+1},v) 
		= <-\bar{g}_e(u^n,u_e^n),v>_{\Upsilon}+<\bar{g}_c(u^n),v>_{\partial\Omega},\quad \\
		&a_e(u_e^{n+1},v_e)+\lambda(u_e^{n+1},v_e) 
		= <\bar{g}_e(u^n,u_e^n),v_e>_{\Upsilon},
	\end{align}
	where $u^{n+1}$, $u_e^{n+1}$ share same initial values as $u^0,u_e^0$.\\
	Let $r^{n+1}=u^{n+1}-u^{n}$, $r_e^{n+1}=u_e^{n+1}-u_e^{n}$ and $n\geq 1$,  we have
	\begin{align*}
		D_c(\nabla r^{n+1},\nabla v)+\lambda(r^{n+1},v) 
		&= <-\bar{g}_e(u^n,u_e^n)+\bar{g}_e(u^{n-1},u_e^{n-1}),v>_{\Upsilon}\\
		&+<\bar{g}_c(u^n)-\bar{g}_c(u^{n-1}),v>_{\partial\Omega}, \\
		D_e(\nabla r_e^{n+1},\nabla v_e)+\lambda(r_e^{n+1},v_e) 
		&= <\bar{g}_e(u^n,u_e^n)-\bar{g}_e(u^{n-1},u_e^{n-1}),v_e>_{\Upsilon},
	\end{align*}
	then let $v=r^{n+1}$ and $v_e=r_e^{n+1}$, we obtain
	\begin{align*}
		D_c\|\nabla r^{n+1}\|^2 +\lambda\|r^{n+1}\|^2  
		&= <-\bar{g}_e(u^n,u_e^n)+\bar{g}_e(u^{n-1},u_e^{n-1}),r^{n+1}>_{\Upsilon}\\
		&+<\bar{g}_c(u^n)-\bar{g}_c(u^{n-1}),r^{n+1}>_{\partial\Omega}, \\
		D_e\|\nabla r_e^{n+1}\|^2 +\lambda\|r_e^{n+1}\|^2 
		&= <\bar{g}_e(u^n,u_e^n)-\bar{g}_e(u^{n-1},u_e^{n-1}),r_e^{n+1}>_{\Upsilon}.
	\end{align*}
	Sum those two equations, we get \eqref{eg1}
	\begin{equation}\label{eg1}
		\begin{aligned}
			D_c\|\nabla r^{n+1}\|^2 &+\lambda\|r^{n+1}\|^2  
			+
			D_e\|\nabla r_e^{n+1}\|^2 +\lambda\|r_e^{n+1}\|^2 \\
			=& 
			<\bar{g}_c(u^n)-\bar{g}_c(u^{n-1}),r^{n+1}>_{\partial\Omega} \\
			&
			+
			<\bar{g}_e(u^n,u_e^n)-\bar{g}_e(u^{n-1},u_e^{n-1}),r_e^{n+1}-r^{n+1}>_{\Upsilon}. 
		\end{aligned}
	\end{equation}
	By Lemma \ref{g12u1u2}, the first term of the right hand side is bounded: 
	\begin{eqnarray}\label{eg2}
		<\bar{g}_c(u^n)-\bar{g}_c(u^{n-1}),r^{n+1}>_{\partial\Omega} 
		%&\leq&
		%K_1<|e^{n}|, e^{n+1}>_{\partial\Omega} \\
		\leq
		\frac{K_1}{2}\|r^{n}\|^2_{L^2(\partial\Omega)}
		+
		\frac{K_1}{2}\|r^{n+1}\|^2_{L^2(\partial\Omega)},\quad
	\end{eqnarray}
	and the second term of the right hand side is bounded: 
\begin{equation}\label{eg3}
	\begin{aligned}
		<\bar{g}_e(u^n,u_e^n)&-\bar{g}_e(u^{n-1},u_e^{n-1}),r_e^{n+1}-r^{n+1}>_{\Upsilon}
		\\
		&\leq
		K_2<|r^n|+|r_e^n|+\int_0^t|r^n(s)|{\rm d}s, r_e^{n+1}-r^{n+1}>_{\Upsilon}\\
		&\leq
		\frac{K_2}{2}\|r^{n}\|^2_{L^2({\Upsilon})}+\frac{K_2}{2}\|r_e^{n}\|^2_{L^2({\Upsilon})}
		+
		\frac{K_2}{2}\int_0^t\|r^n(s)\|^2_{L^2({\Upsilon})}{\rm d}s \\
		&+
		K_2(2+T)
		\left(
		\|r^{n+1}\|^2_{L^2({\Upsilon})}
		+
		\|r_e^{n+1}\|^2_{L^2({\Upsilon})}
		\right).
	\end{aligned}
\end{equation}
	By equations \eqref{eg1} to \eqref{eg3}, we have
\begin{equation*}
	\begin{aligned}
		&
		D_c\|\nabla r^{n+1}\|^2 +\lambda\|r^{n+1}\|^2  
		+
		D_e\|\nabla r_e^{n+1}\|^2 +\lambda\|r_e^{n+1}\|^2 \\
		&\leq
		\frac{K_1}{2}\|r^{n}\|^2_{L^2(\partial\Omega)}
		+
		\frac{K_2}{2}\|r^{n}\|^2_{L^2({\Upsilon})}+
		\frac{K_2}{2}\|r_e^{n}\|^2_{L^2({\Upsilon})}
		+
		\frac{K_2}{2}\int_0^t\|r^n(s)\|^2_{L^2({\Upsilon})}{\rm d}s \\
		&
		+
		\frac{K_1}{2}\|r^{n+1}\|^2_{L^2(\partial\Omega)}
		+
		K_2(2+T)
		\left(
		\|r^{n+1}\|^2_{L^2({\Upsilon})}
		+
		\|r_e^{n+1}\|^2_{L^2({\Upsilon})}
		\right),
	\end{aligned}
\end{equation*}
take $L^{\infty}$ norm of $\|r^{n}\|^2_{L^2(\partial\Omega)}, \|r^{n}\|^2_{L^2({\Upsilon})}, \|r_e^{n}\|^2_{L^2({\Upsilon})},$ we obtain \eqref{eg4}
\begin{equation}\label{eg4}
\begin{aligned}
&
D_c\|\nabla r^{n+1}\|^2 +\lambda\|r^{n+1}\|^2  
+
D_e\|\nabla r_e^{n+1}\|^2 +\lambda\|r_e^{n+1}\|^2 \\
&\leq
C_1
\left(
\|r^{n}\|^2_{L^\infty(0,T;L^2(\partial\Omega))}
+
\|r^{n}\|^2_{L^\infty(0,T;L^2({\Upsilon}))}
+
\|r_e^{n}\|^2_{L^\infty(0,T;L^2({\Upsilon}))}
\right) \\
&
+
C_2
\left(
\|r^{n+1}\|^2_{L^2(\partial\Omega)}
+
\|r^{n+1}\|^2_{L^2({\Upsilon})}
+
\|r_e^{n+1}\|^2_{L^2({\Upsilon})}
\right). 
\end{aligned}
\end{equation}
	Let $\alpha >1$, add the same term to both sides of \eqref{eg4}, we have \eqref{eg5} 
\begin{equation}\label{eg5} 
	\begin{aligned}
	&
	D_c\|\nabla r^{n+1}\|^2 +\lambda\|r^{n+1}\|^2  
	+
	D_e\|\nabla r_e^{n+1}\|^2 +\lambda\|r_e^{n+1}\|^2 \\
	&
	+
	\alpha C_1
	\left(
	\|r^{n+1}\|^2_{L^2(\partial\Omega)}
	+
	\|r^{n+1}\|^2_{L^2({\Upsilon})}
	+
	\|r_e^{n+1}\|^2_{L^2({\Upsilon})}
	\right) \\
	&\leq
	C_1
	\left(
	\|r^{n}\|^2_{L^\infty(0,T;L^2(\partial\Omega))}
	+
	\|r^{n}\|^2_{L^\infty(0,T;L^2({\Upsilon}))}
	+
	\|r_e^{n}\|^2_{L^\infty(0,T;L^2({\Upsilon}))}
	\right) \\
	&
	+
	(\alpha C_1+C_2)
	\left(
	\|r^{n+1}\|^2_{L^2(\partial\Omega)}
	+
	\|r^{n+1}\|^2_{L^2({\Upsilon})}
	+
	\|r_e^{n+1}\|^2_{L^2({\Upsilon})}
	\right).
	\end{aligned}
\end{equation}
	From Lemma \ref{trace1} and $\lambda$ is large enough, we have
	\begin{equation}\label{eg6}
	\begin{aligned}
		(\alpha C_1&+C_2)
		\left(
		\|r^{n+1}\|^2_{L^2(\partial\Omega)}
		+
		\|r^{n+1}\|^2_{L^2({\Upsilon})}
		+
		\|r_e^{n+1}\|^2_{L^2({\Upsilon})}
		\right)\\
		&\leq 
		D_c\|\nabla r^{n+1}\|^2 +\lambda\|r^{n+1}\|^2  
		+
		D_e\|\nabla r_e^{n+1}\|^2 +\lambda\|r_e^{n+1}\|^2. \\
	\end{aligned}
\end{equation}
%	Also by Lemma \ref{trace1}, we get the inequalities as follows:
%	\begin{eqnarray*}
%		(\alpha C_1+C_2)\|r^{n+1}\|^2_{L^2(\partial\Omega)}
%		&\leq& 
%		\frac{D_c}{2}\|\nabla r^{n+1}\|^2 
%		+
%		C\|r^{n+1}\|^2, 
%		\\
%		(\alpha C_1+C_2)\|r^{n+1}\|^2_{L^2({\Upsilon})}
%		&\leq& 
%		\frac{D_c}{2}\|\nabla r^{n+1}\|^2 
%		+
%		C\|r^{n+1}\|^2, 
%		\\
%		(\alpha C_1+C_2)\|r_e^{n+1}\|^2_{L^2({\Upsilon})}
%		&\leq& 
%		D_e\|\nabla r_e^{n+1}\|^2 
%		+
%		C\|r_e^{n+1}\|^2, 
%		\\
%	\end{eqnarray*} 
	So that combine \eqref{eg5} and \eqref{eg6}, we obtain \eqref{eg7}  
\begin{equation}\label{eg7}
	\begin{aligned}
		&
		\|r^{n+1}\|^2_{L^\infty(0,T;L^2(\partial\Omega))}
		+
		\|r^{n+1}\|^2_{L^\infty(0,T;L^2({\Upsilon}))}
		+
		\|r_e^{n+1}\|^2_{L^\infty(0,T;L^2({\Upsilon}))}
		\\
		&\leq
		\frac{1}{\alpha }
		\left(
		\|r^{n}\|^2_{L^\infty(0,T;L^2(\partial\Omega))}
		+
		\|r^{n}\|^2_{L^\infty(0,T;L^2({\Upsilon}))}
		+
		\|r_e^{n}\|^2_{L^\infty(0,T;L^2({\Upsilon}))}
		\right). 
	\end{aligned}
\end{equation}
	From \eqref{eg4} and \eqref{eg7}, let $\tau = \min\{D_c,D_e\}, C_\tau =\frac{C_1+C_2/\alpha}{\tau}$, we arrive at
	\begin{eqnarray*}\label{eg10}
		&&\|r^{n+1}\|^2_{L^\infty(0,T;H^1(\Omega_c))} 
		+
		\|r_e^{n+1}\|^2_{L^\infty(0,T;H^1(\Omega_e))}  \\
		&&\leq
		C_\tau
		\left(
		\|r^{n}\|^2_{L^\infty(0,T;L^2(\partial\Omega))}
		+
		\|r^{n}\|^2_{L^\infty(0,T;L^2({\Upsilon}))}
		+
		\|r_e^{n}\|^2_{L^\infty(0,T;L^2({\Upsilon}))}
		\right) \nonumber\\
		&&\leq
		\frac{C_\tau}{\alpha^n}
		\left(
		\|r^{1}\|^2_{L^\infty(0,T;L^2(\partial\Omega))}
		+
		\|r^{1}\|^2_{L^\infty(0,T;L^2({\Upsilon}))}
		+
		\|r_e^{1}\|^2_{L^\infty(0,T;L^2({\Upsilon}))}
		\right). \nonumber
	\end{eqnarray*}
	which implies that $u^n,u_e^n$ converge to a unique solution. 
	%\begin{eqnarray*}
	%\|r^{n+1}\|^2_{L^\infty(0,T;H^1(\Omega_c))} 
	%+
	%\|r_e^{n+1}\|^2_{L^\infty(0,T;H^1(\Omega_e))}  
	%&\leq& 
	%\frac{C}{\alpha^n}
	%\\
	%\|u^{n+1}\|^2_{\infty;H^1} 
	%+
	%\|u_e^{n+1}\|^2_{\infty;H^1}  
	%-
	%\|u^{n}\|^2_{\infty;H^1} 
	%-
	%\|u_e^{n}\|^2_{\infty;H^1}  
	%&\leq& 
	%\frac{C}{\alpha^n}
	%\\
	%\|u^{n}\|^2_{\infty;H^1} 
	%+
	%\|u_e^{n}\|^2_{\infty;H^1}  
	%-
	%\|u^{n-1}\|^2_{\infty;H^1} 
	%-
	%\|u_e^{n-1}\|^2_{\infty;H^1}  
	%&\leq& 
	%\frac{C}{\alpha^{n-1}}
	%\\
	%&\cdots&
	%\\
	%\|u^{2}\|^2_{\infty;H^1} 
	%+
	%\|u_e^{2}\|^2_{\infty;H^1}  
	%-
	%\|u^{1}\|^2_{\infty;H^1} 
	%-
	%\|u_e^{1}\|^2_{\infty;H^1}  
	%&\leq& 
	%\frac{C}{\alpha^{1}}
	%\end{eqnarray*}
\end{proof}

\subsection{Galerkin Projection}\label{GP}
We define $S_k(\Omega_c)$, $S_k(\Omega_e)$, $k\geq 1$ as the classical conforming $P_k$ finite element spaces, with shape regular triangular/tetrahedral meshes and the size of the corresponding mesh is $h$. For curved boundaries, we refer to isoparametric finite element methods, see \cite{09iso,iso1986} and references therein.  The Galerkin projection for the exact solutions $u,u_e$ can be obtained from \eqref{NLPJ}:
\begin{equation}\label{NLPJ}
	\left\{
	\begin{aligned} 
		&a_c(u-W,v)+\lambda(u-W,v)  = <-\bar{g}_e(u,u_e)+\bar{g}_e(W,W_e),v>_{\Upsilon} \\
		& \qquad\qquad\qquad\qquad\qquad\qquad\quad    +<\bar{g}_c(u)-\bar{g}_c(W),v>_{\partial\Omega}, \\
		&a_e(u_e-W_e,v_e)+\lambda(u_e-W_e,v_e) = <\bar{g}_e(u,u_e)-\bar{g}_e(W,W_e),v_e>_{\Upsilon}  .
	\end{aligned}
	\right.
\end{equation}
where $W(t), v\in S_k(\Omega_c)$, and $W_e(t),v_e\in S_k(\Omega_c)$, $\lambda$ is large enough.  

Also without further notice, $\eta, \eta_e$ are defined as $\eta := u-W$, $\eta_e :=u_e-W_e$, the following norms are denoted as $\|\cdot\|_{\infty; k} := \|\cdot\|_{L^\infty(0,T;H^k(D))}$, $\|\cdot\|_{\infty; 0} := \|\cdot\|_{L^\infty(0,T;L^2(D))}$,  and $\|\cdot\|:=\|\cdot\|_{L^2(D)}$, where $D$ can be $\Omega_c$ or $\Omega_e$ depending on the domain of the given function. Then we have the following theorem:
\begin{theorem}\label{p_teta}  
	With $P_k$ elements, $k\geq 1$, and appropriately chosen $W(0), W_e(0)$, we have 
	\begin{align}
		\|\eta\|_{\infty;1}
		+
		\|\eta_e\|_{\infty;1} 
		&\leq 
		C(
		\|u\|_{\infty; {k+1}}
		+
		\|u_e\|_{\infty; {k+1}}
		)h^k, \label{NLPJH1}
		\\
		\left\| {\partial_t \eta} \right\|_{\infty;{1}}
		+
		\left\| {\partial_t \eta_e} \right\|_{\infty;{1}} 
		&\leq 
		C(
		\|u\|_{\infty; {k+1}}
		+
		\left\| {\partial_t u} \right\|_{\infty; {k+1}}
		)h^k \label{dtNLPJH1}
		\\
		&
		+
		C
		(
		\|u_e\|_{\infty; {k+1}}
		+
		\left\| {\partial_t u_e} \right\|_{\infty; {k+1}}
		)
		h^k,
		\nonumber
	\end{align} 
	where $h$ is the mesh size and $C$ does not depend on $h$.
\end{theorem}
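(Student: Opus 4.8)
The plan is to estimate $\eta$ and $\eta_e$ by the standard elliptic-projection argument, splitting each error into an interpolation part and a "discrete remainder" part, then closing the estimate by absorbing boundary/interface terms into the coercive bilinear forms via the trace inequality (Lemma~\ref{trace1}) together with the choice of $\lambda$ large. First I would let $\Pi u\in S_k(\Omega_c)$, $\Pi u_e\in S_k(\Omega_e)$ denote the nodal (or Scott--Zhang) interpolants and write $\eta=(u-\Pi u)+(\Pi u-W)=:\rho+\theta$, and similarly $\eta_e=\rho_e+\theta_e$, where $\theta\in S_k(\Omega_c)$, $\theta_e\in S_k(\Omega_e)$. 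The interpolation pieces $\rho,\rho_e$ are controlled directly by classical approximation theory: $\|\rho\|_{\infty;1}\le C\|u\|_{\infty;k+1}h^k$ and likewise for $\rho_e$. So the work is to bound $\theta,\theta_e$. Subtracting, the equation \eqref{NLPJ} gives a variational identity for $(\theta,\theta_e)$ with test functions in the discrete spaces; taking $v=\theta$, $v_e=\theta_e$ and adding the two equations, the left side controls $\tau(\|\nabla\theta\|^2+\|\nabla\theta_e\|^2)+\lambda(\|\theta\|^2+\|\theta_e\|^2)$ with $\tau=\min\{D_c,D_e\}$, exactly as in the proof of Lemma~\ref{NLP1L}.

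Next I would bound the right-hand side. It consists of (i) terms $a_c(\rho,\theta)+\lambda(\rho,\theta)$ and the analogue in $\Omega_e$, handled by Cauchy--Schwarz and Young's inequality, contributing $C(\|\rho\|_1^2+\|\rho_e\|_1^2)$ plus absorbable multiples of $\|\nabla\theta\|^2,\|\nabla\theta_e\|^2,\|\theta\|^2,\|\theta_e\|^2$; and (ii) the modified nonlinear boundary/interface terms $\langle \bar g_c(u)-\bar g_c(W),\theta\rangle_{\partial\Omega}$ and $\langle \bar g_e(u,u_e)-\bar g_e(W,W_e),\theta_e-\theta\rangle_{\Upsilon}$. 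Using Lemma~\ref{g12u1u2} (valid because $\phi$ keeps arguments bounded), the latter are bounded by $K(\|\eta\|_{L^2(\partial\Omega)}+\|\eta\|_{L^2(\Upsilon)}+\|\eta_e\|_{L^2(\Upsilon)}+\int_0^t\|\eta(s)\|_{L^2(\Upsilon)}\,ds)$ times $(\|\theta\|_{L^2(\partial\Omega)}+\|\theta\|_{L^2(\Upsilon)}+\|\theta_e\|_{L^2(\Upsilon)})$. Splitting $\eta=\rho+\theta$ again, the $\rho$-contributions go into the data term (traces of $\rho$ are $O(h^k)$ in $L^2(\partial D)$ up to the usual $h^{1/2}$ which only helps), while the $\theta$-contributions are, via Lemma~\ref{trace1} with $\epsilon$ small and $\lambda$ large, absorbed into $\tau(\|\nabla\theta\|^2+\|\nabla\theta_e\|^2)+\lambda(\|\theta\|^2+\|\theta_e\|^2)$ on the left, leaving at most a Gronwall-type term $C\int_0^t(\|\theta(s)\|_1^2+\|\theta_e(s)\|_1^2)\,ds$. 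Taking $L^\infty$ in time and applying Gronwall's inequality then yields $\|\theta\|_{\infty;1}+\|\theta_e\|_{\infty;1}\le C(\|u\|_{\infty;k+1}+\|u_e\|_{\infty;k+1})h^k$, which combined with the interpolation bounds proves \eqref{NLPJH1}; the choice of $W(0),W_e(0)$ as the discrete projections of $u_0,u_{e,0}$ (or simply the interpolants) makes $\theta(0)=\theta_e(0)=0$ so no initial error enters.

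For \eqref{dtNLPJH1} I would differentiate \eqref{NLPJ} in $t$. Since the system is linear in $(u-W,u_e-W_e)$ once $\bar g_c,\bar g_e$ are regarded through their (Lipschitz) increments, $\partial_t\theta,\partial_t\theta_e$ satisfy a variational problem of exactly the same structure, now with data involving $\partial_t\rho,\partial_t\rho_e$ and time-derivatives of the nonlinear terms. The only genuinely new point is differentiating $\bar g_e$, which through \eqref{ptu}--\eqref{matrixf1} depends on $u$ both pointwise and through the ODE history; the estimate of Lemma~\ref{pu1u2}/Lemma~\ref{g12u1u2} must be applied to the difference quotients, producing terms in $\partial_t\eta$ on $\partial\Omega$ and $\Upsilon$ plus a history integral $\int_0^t\|\partial_t\eta(s)\|_{L^2(\Upsilon)}ds$ (and a boundary term in $\eta$ itself, already controlled by \eqref{NLPJH1}). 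Repeating the absorption-plus-Gronwall argument, now picking up the extra $h^k$-data from both $\|\partial_t u\|_{\infty;k+1}$ and $\|u\|_{\infty;k+1}$ (the latter because $\partial_t$ of the nonlinearity brings down a factor depending on $u$), gives \eqref{dtNLPJH1}.

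The main obstacle I anticipate is the interface term's \emph{non-local-in-time} dependence coming from the open-probability ODE: the factor $\int_0^t\|\eta(s)\|_{L^2(\Upsilon)}ds$ (and its time-differentiated analogue) cannot be absorbed by the elliptic coercivity and must instead be carried through a Gronwall argument in the $L^\infty(0,T;H^1)$ norm. One has to be careful that the constant $K_2$ from Lemma~\ref{g12u1u2} is $h$-independent (it is, since $\phi$ bounds the arguments uniformly), and that the trace-inequality absorption is done with $\epsilon$ chosen independently of $h$ and $\lambda$ chosen large enough to dominate the remaining $O(\epsilon^{-2})$ constants — precisely the mechanism already used in \eqref{eg6}. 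The rest is bookkeeping of interpolation estimates.
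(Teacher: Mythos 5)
Your proposal is correct and follows essentially the same route as the paper: an energy/coercivity argument on the projection equations \eqref{NLPJ}, the Lipschitz-in-history estimates of Lemma \ref{g12u1u2}, trace-inequality absorption with $\lambda$ large, and time-differentiation of the system for the second bound. The only differences are bookkeeping — you use the classical $\rho+\theta$ splitting and Gronwall, whereas the paper tests directly with $v=\delta u+\eta$ and absorbs the history integral via the $T$-scaling and large $\lambda$ (and note the history integral in the differentiated problem involves $\eta$, not $\partial_t\eta$, since $P'$ depends on $u$ only through the ODE state; either way the argument closes).
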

\begin{proof}
	First step, we prove \eqref{NLPJH1}. Letting $\delta u = w-u$, $\delta u_e = w_e-u_e$, 
	$w\in S_k(\Omega_c), w_e \in S_k(\Omega_e)$, \eqref{NLPJ} can be written as
	\begin{equation}\label{NLPJ2}
		\left\{
		\begin{aligned} 
			&a_c( \eta,v)+\lambda(\eta,v) 
			= <-\bar{g}_e(u,u_e)+\bar{g}_e(W,W_e),v>_{\Upsilon} \\
			&\qquad\qquad\qquad\qquad\quad
			+<\bar{g}_c(u)-\bar{g}_c(W),v>_{\partial\Omega}, \\
			&a_e(\eta_e,v_e)+\lambda(\eta_e,v_e) 
			= <\bar{g}_e(u,u_e)-\bar{g}_e(W,W_e),v_e>_{\Upsilon}.   
		\end{aligned}
		\right. 
	\end{equation} 
	Now set $v = w-W=\delta u+\eta$, $v_e = w_e-W_e=\delta u_e+\eta_e$, then
	\begin{equation*}\label{NLPJ3}
		\left\{
		\begin{aligned} 
			&a_c(\eta, \delta u+\eta)+\lambda(\eta,\delta u+\eta) 
			= <-\bar{g}_e(u,u_e)+\bar{g}_e(W,W_e),\delta u+\eta>_{\Upsilon}\\
			& \qquad\qquad\qquad\qquad\qquad\qquad\quad\   
			+<\bar{g}_c(u)-\bar{g}_c(W),\delta u+\eta>_{\partial\Omega}, \\
			&a_e(\eta_e, \delta u_e+\eta_e)+\lambda(\eta_e,\delta u_e+\eta_e) 
			= <\bar{g}_e(u,u_e)-\bar{g}_e(W,W_e),\delta u_e+\eta_e>_{\Upsilon},
		\end{aligned}
		\right. 
	\end{equation*} 
	moving the terms containing $\delta u$ to the right hand side leads to (with Schwarz inequality, Lemma \ref{g12u1u2} and trace theorem):
	\begin{align*}%\label{NLPJ4}
		&D_c\|\nabla \eta\|^2 +\lambda\|\eta\|^2 
		+
		D_e\|\nabla \eta_e\|^2 +\lambda\|\eta_e\|^2 \\
		&\leq 
		\frac{D_c}{4}\|\nabla \eta\|^2 
		+
		\frac{\lambda}{4}\|\eta\|^2 
		+
		\frac{D_e}{4}\|\nabla \eta_e\|^2 
		+
		\frac{\lambda}{4}\|\eta_e\|^2 
		\nonumber\\
		&+
		C_1(\|\eta\|^2_{L^2({\Upsilon})}+\|\eta_e\|^2_{L^2({\Upsilon})}+\|\eta\|^2_{L^2(\partial \Omega)}) 
		+
		C_2\int_0^t\|\eta(s)\|^2_{L^2({\Upsilon})}{\rm d}s \nonumber\\
		&+
		C_3(
		\|\eta(0)\|^2_{L^2({\Upsilon})}+\|\eta_e(0)\|^2_{L^2({\Upsilon})} 
		+
		\inf\limits_{w\in S_k(\Omega_c)}\|\delta u\|^2_{H^1(\Omega_c)}\nonumber\\
		&+
		\inf\limits_{w_e\in S_k(\Omega_e)}\|\delta u_e\|^2_{H^1(\Omega_e)}
		),
		\nonumber
	\end{align*} 
	from Lemma \ref{trace1}, we have 
	$$C_2\int_0^t\|\eta(s)\|^2_{L^2({\Upsilon})}{\rm d}s \leq \int_0^tD_c/{(4T)}\|\nabla\eta(s)\|^2+C\|\eta(s)\|^2{\rm d}s,$$ 
	the bounds for $C_1(\|\eta\|^2_{L^2({\Upsilon})}+\|\eta_e\|^2_{L^2({\Upsilon})}+\|\eta\|^2_{L^2(\partial \Omega)}) $ can be obtained similarly.  Taking $L^{\infty}$ norm with respect to $t$ then to $s$ for the right-hand side, then taking the $L^{\infty}$ norm with respect to $t$ for the left hand side, cancelling the corresponding terms, we obtain
	\begin{align}\label{NLPJ5}
		\|\eta\|^2_{\infty;1}
		+
		\|\eta_e\|^2_{\infty;1} 
		&\leq 
		C(
		\|\eta(0)\|^2_{H^1(\Omega_c)}+\|\eta_e(0)\|^2_{H^1(\Omega_e)} \nonumber \\
		&+
		\sup\limits_{t\in[0,T]}\inf\limits_{w\in S_k(\Omega_c)}\|\delta u\|^2_{H^1(\Omega_c)}
		\nonumber \\
		&
		+
		\sup\limits_{t\in[0,T]}\inf\limits_{w_e\in S_k(\Omega_e)}\|\delta u_e\|^2_{H^1(\Omega_e)}
		),\nonumber
	\end{align} 
	so that \eqref{NLPJH1} is proved. 
	
	Next step, we prove \eqref{dtNLPJH1}.
	Differentiating $\bar{g}_c$ with respect to (w.r.t.) $t$ produces
	\begin{align*}%\label{dtg1u}
		\frac{{\rm d} \bar{g}_c(u)}{{\rm d} t} 
		&= 
		\frac{\partial \bar{g}_c}{\partial u}(u)
		\frac{\partial u}{\partial t}, 
		\\
		\frac{\partial \bar{g}_c}{\partial u}(u)
		&=
		-
		C_1^c
		\frac{2uK_p^2}{(K_p^2+u^2)^2}
		-
		C_2^c
		\frac{K_n}{(K_n+{\phi}(u))^2}\frac{\partial \phi}{\partial u}
		-C_3^c.
	\end{align*}
	The derivative of $\bar{g}_e(u,u_e)$ w.r.t. $t$ is
	\begin{align*}%\label{dtg2uue}
		\frac{{\rm d} \bar{g}_e(u,u_e)}{{\rm d} t} 
		&= 
		C_1^{e}(c_1'(t)+c_2'(t))(\phi(u_e)-\phi(u))\\
		&-C_1^{e}P(t,\phi(u))
		\left(
		\frac{\partial \phi}{\partial u_e}
		\frac{\partial u_e}{\partial t}
		-
		\frac{\partial \phi}{\partial u}
		\frac{\partial u}{\partial t}
		\right)
		\nonumber \\
		&
		+
		\frac{\partial \psi}{\partial u}
		\frac{\partial u}{\partial t}
		+
		\frac{\partial \psi}{\partial u_e}
		\frac{\partial u_e}{\partial t},
		\nonumber 
	\end{align*}
	where ${\psi} = J_S(\phi(u),u_e)+J_{l,e}$,  and
	\begin{align*}
		\frac{\partial \psi}{\partial u}
		&=
		C_2^e
		\frac{K_s}{(K_s+{\phi}(u))^2\phi_{m}(u_e)}\frac{\partial \phi}{\partial u}
		-C_3^e,
		\\
		\frac{\partial \psi}{\partial u_e}
		&=
		-
		C_2^e\frac{\phi(u)}{(K_s+\phi(u))(\phi_{m}(u_e))^2}\frac{\partial \phi_m}{\partial u_e}
		+C_3^e.
	\end{align*}
	Then \eqref{NLPJ} can be differentiated w.r.t. $t$ as
	\begin{equation}\label{dtNLPJ}
		\left\{
		\begin{aligned}
			&a_c
			\left(
			\frac{\partial \eta}{\partial t}
			, v
			\right)
			+
			\lambda
			\left(
			\frac{\partial \eta}{\partial t}
			,v
			\right) 
			=
			<-
			\frac{{\rm d} \bar{g}_e(u,u_{e})}{{\rm d} t}  
			+
			\frac{{\rm d} \bar{g}_e(W,W_{e})}{{\rm d} t} 
			,v>_{\Upsilon} 
			\\
			&\qquad\qquad\qquad\qquad\qquad\qquad\ 		
			+<\frac{{\rm d} \bar{g}_c(u)}{{\rm d} t}
			-
			\frac{{\rm d} \bar{g}_c(W)}{{\rm d} t},v>_{\partial\Omega}, 
			\\
			&a_e
			\left(
			\frac{\partial \eta_e}{\partial t},  v_e
			\right)
			+
			\lambda
			\left(
			\frac{\partial \eta_e}{\partial t},v_e
			\right) 
			= <
			\frac{{\rm d} \bar{g}_e(u,u_e)}{{\rm d} t}
			-
			\frac{{\rm d} \bar{g}_e(W,W_e)}{{\rm d} t}
			,v_e>_{\Upsilon}.   
		\end{aligned}
		\right. 
	\end{equation} 
	All terms in the derivatives of $\bar{g}_c(u)$ and $\bar{g}_e(u,u_e)$ w.r.t. t, except $\frac{\rm d}{{\rm d} t} J_R(\phi(u),\phi(u_e))$,  are Lipschitz continuous. $\frac{\rm d}{{\rm d} t} J_R$ can be treated as in Lemma \ref{pu1u2}, which gives us
	\begin{align*}
		\left|
		\frac{{\rm d} \bar{g}_c(u)}{{\rm d} t} -\frac{{\rm d} \bar{g}_c(W)}{{\rm d} t} 
		\right|
		&\leq 
		C_1
		\left|\eta\right|
		\left|\frac{\partial u}{\partial t}\right|
		+
		\left|\frac{\partial \bar{g}_c}{\partial u}(W)\right|
		\left|\frac{\partial \eta}{\partial t}\right|,
	\end{align*}
	\begin{align*}
		\left|
		\frac{{\rm d} \bar{g}_e(u,u_{e})}{{\rm d} t} 
		-
		\frac{{\rm d} \bar{g}_e(W,W_e)}{{\rm d} t} 
		\right|
		&\leq
		C_2\left(|\eta|+|\eta_e|+|\eta(0)|+\int_0^t|\eta(s)|{\rm d}s\right)\\
		&
		+
		C_3\left(|\eta(0)|+\int_0^t|\eta(s)|{\rm d}s\right)
		\left|
		\frac{\partial \phi}{\partial u_{e}}
		\frac{\partial u_{e}}{\partial t}
		-
		\frac{\partial \phi}{\partial u}
		\frac{\partial u}{\partial t}
		\right|\\
		&+
		C_3
		\left|\eta\right|
		\left|\frac{\partial u}{\partial t}\right|
		+
		C_3
		\left|\frac{\partial \phi}{\partial u}(W)\right|
		\left|\frac{\partial \eta}{\partial t}\right|\\
		&
		+
		C_3
		\left|\eta_e\right|
		\left|\frac{\partial u_{e}}{\partial t}\right|
		+
		C_3
		\left|\frac{\partial \phi}{\partial u_e}(W_e)\right|
		\left|\frac{\partial \eta_e}{\partial t}\right|
		\\
		&
		+
		C_4
		\left(|\eta|+|\eta_e|\right)
		\left|\frac{\partial u}{\partial t}\right|
		+
		\left|\frac{\partial \psi}{\partial u}(W,W_e)\right|
		\left|\frac{\partial \eta}{\partial t}\right|
		\\
		&
		+
		C_5
		\left(|\eta|+|\eta_e|\right)
		\left|\frac{\partial u_{e}}{\partial t}\right|
		+
		\left|\frac{\partial \psi}{\partial u_e}(W,W_e)\right|
		\left|\frac{\partial \eta_e}{\partial t}\right|,
	\end{align*}
	where $\frac{\partial \phi}{\partial u}$, $\frac{\partial \phi}{\partial u_e}$, $\frac{\partial \psi}{\partial u}$, $\frac{\partial \psi}{\partial u_e}$ are bounded from the definitions of $\phi(\cdot)$ in  Section \ref{NLP1LA} and $\phi_m(\cdot)$ in Section \ref{intro}. Similar as the estimates for $\eta$, $\eta_e$, \eqref{dtNLPJH1} can be obtained.
\end{proof}

\section{Error Estimate for the Semi-discrete Galerkin Method}\label{SD}
Without further notice, in the sequel, we use the modified system (change $f(b,u)$ to $f(\phi(b),\phi(u))$, $g_c(u)$ to $\bar{g}_c(u)$ and $g_e(u,u_e)$ to $\bar{g}_e(u,u_e)$ as in Section \ref{errorGP}) to obtain the error analysis since it has the same solution as the original system, see a similar proof in Theorem \ref{EU3}.  Letting $U(t), U_b(t) \in S_k(\Omega_c)$, $U_e(t) \in S_k(\Omega_e)$, for $t\in[0,T]$ and $v, v_b\in S_k(\Omega_c)$, $v_e \in S_k(\Omega_e)$, the semi-discrete form of the model is given as
\begin{equation}\label{bomegaU}
	\left\{
	\begin{aligned}
		&		\left(
		{\partial_t U},v
		\right)
		+
		a_c(U,v) 
		= 
		<g_c(U),v>_{\partial\Omega}
		-
		<g_e(U,U_e),v>_{\Upsilon}
		\\
		&	\qquad\qquad\qquad\qquad\qquad 		+
		(f(U_b,U),v),
		\\
		&		\left(
		{\partial_t U_b},v_b
		\right)
		+
		a_b( U_b, v_b) 
		=
		(f(U_b,U),v_b),
		\\
		&		\left(
		{\partial_t U_e},v_e
		\right)
		+
		a_e(U_e,v_e) 
		= 
		<g_e(U,U_e),v_e>_{\Upsilon}.
	\end{aligned}
	\right. 
\end{equation} 
Then from the model equations \eqref{uomega1}-\eqref{bBdyonOmega2} and the nonlinear projection \eqref{NLPJ}, we have
\begin{equation} 
	\left\{
	\begin{aligned}
		&		\left(
		{\partial_t W},v
		\right)
		+
		a_c(  W, v)  
		\\
		&\qquad\quad = 
		<g_c(W),v>_{\partial\Omega}
		-
		<g_e(W,W_e),v>_{\Upsilon}
		+
		(f(W_b,W),v)  
		\\
		&\qquad\quad\quad +\lambda (u-W,v)
		-\left(\partial_t\eta
		, v
		\right)  	
		+
		(f(b,u)-f(W_b,W),v),
		\\
		%%%%%%%%%%%%%%%%%%%%%%%%%%%%%%%%%%%%%%%%
		&		\left(
		{\partial_t W_b} ,v_b
		\right)
		+ 
		a_b( W_b, v_b)  \\
		&\qquad\quad = 
		(f(W_b,W),v_b)
		\label{bomegaW}
		\\
		&\qquad\quad \quad
		+(b-W_b,v_b)
		-\left(\partial_t\eta_b
		, v_b
		\right)  	
		+
		(f(b,u)-f(W_b,W),v_b),
		\\
		%%%%%%%%%%%%%%%%%%%%%%%%%%%%%%%%%%%%%%%%
		&		\left(
		{\partial_t W_e} ,v_e
		\right)
		+
		a_e( W_e, v_e) \\
		&\qquad\quad 		= 
		<g_e(W,W_e),v_e>_{\Upsilon}
		+\lambda (u_e-W_e,v_e)
		-\left(\partial_t\eta_e
		, v_e
		\right),  
	\end{aligned}
	\right. 
\end{equation} 
where $\eta= u-W,  \eta_b=b-W_b, \eta_e=u_e-W_e$, and $W_b(t)\in S_k(\Omega_c)$ is the Galerkin Projection of $b$ from
$({D_b} \nabla (b-W_b),\nabla v_b) +(b-W_b,v_b) = 0.$
Letting
$e=u-U,e_b=b-U_b, e_e = u_e-U_e,$
we get the following theorem:
\begin{theorem} If the exact solutions $u, b, u_e$ for \eqref{uomega1}-\eqref{bBdyonOmega2} are smooth enough, then with $P_k$ elements, $k\geq 1$, and appropriately chosen $U(0),\, U_e(0),\, U_b(0)$, we have 
	\begin{align}
		\|e\|^2_{ \infty;0}
		&+
		\|e_b\|^2_{ \infty;0}
		+
		\|e_e\|^2_{ \infty;0} \nonumber \\
		&+
		\int_0^T
		\|\nabla e(s)\|^2 
		+
		\|\nabla e_b(s)\|^2 
		+
		\|\nabla e_e(s)\|^2 
		{\rm d}s
		\nonumber 
		\\
		&\leq
		Ch^{2k}
		\left(
		\|u\|^2_{\infty;k+1}
		+
		\left\|
		{\partial_t u} 
		\right\|^2_{\infty;k+1} 	
		+
		\|b\|^2_{\infty;k+1}
		+
		\left\|
		{\partial_t b}
		\right\|^2_{\infty;k+1} \right. \nonumber \\
		&	
		+
		\left.
		\|u_e\|^2_{\infty;k+1}
		+
		\left\|
		{\partial_t u_e} 
		\right\|^2_{\infty;k+1} 	
		\right),
		\nonumber
	\end{align}
	where $h$ is the mesh size, $C$ does not depend on $h$.
\end{theorem}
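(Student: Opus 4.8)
The plan is to derive an energy estimate for the error system obtained by subtracting the semi-discrete scheme \eqref{bomegaU} from the consistency identities \eqref{bomegaW}, using the Galerkin-projection errors $\eta, \eta_b, \eta_e$ from Theorem \ref{p_teta} as the bridge between the exact solution and the discrete solution. Writing $e = \eta + (W - U)$, $e_b = \eta_b + (W_b - U_b)$, $e_e = \eta_e + (W_e - U_e)$, and setting $\theta := W - U \in S_k(\Omega_c)$, $\theta_b := W_b - U_b \in S_k(\Omega_c)$, $\theta_e := W_e - U_e \in S_k(\Omega_e)$, I would first note that the projection was constructed precisely so that the bilinear-form-plus-$\lambda$-terms in the difference cancel against the $\lambda(u-W,v)$ etc.\ correction terms, leaving an equation for $\theta, \theta_b, \theta_e$ driven only by (a) the time-derivative projection errors $\partial_t\eta, \partial_t\eta_b, \partial_t\eta_e$, (b) differences of the nonlinear reaction term $f(\phi(b),\phi(u)) - f(\phi(U_b),\phi(U))$, and (c) differences of the boundary fluxes $g_c, g_e$ evaluated at $(U,U_e)$ versus $(W,W_e)$. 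Then I would test the three error equations with $v=\theta$, $v_b=\theta_b$, $v_e=\theta_e$ respectively and add them.

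The key steps, in order: (1) After testing and summing, the left side gives $\frac12\frac{d}{dt}(\|\theta\|^2+\|\theta_b\|^2+\|\theta_e\|^2)$ plus the coercive terms $D_c\|\nabla\theta\|^2 + D_b\|\nabla\theta_b\|^2 + D_e\|\nabla\theta_e\|^2$; crucially the boundary flux contributions from $\Upsilon$ combine as $\langle g_e(W,W_e)-g_e(U,U_e),\theta_e-\theta\rangle_\Upsilon$, exactly the structure exploited in Lemma \ref{NLP1L}. (2) Bound the $f$-differences: since $\phi$ is Lipschitz (globally, by construction in Section \ref{NLP1LA}) and $u, b$ are bounded, $|f(\phi(b),\phi(u))-f(\phi(U_b),\phi(U))| \le C(|\eta|+|\theta|+|\eta_b|+|\theta_b|)$, giving $L^2$ volume terms controlled by Young's inequality and the $h^k$ estimate of Theorem \ref{p_teta}. (3) Bound the boundary differences using Lemma \ref{g12u1u2}: $g_c, g_e$ are Lipschitz in their arguments up to an additional $\int_0^t|\cdot|\,ds$ term coming from the ODE memory in $P(t,u)$; trace inequality (Lemma \ref{trace1}) with a small parameter $\epsilon$ absorbs $\|\theta\|_{L^2(\partial\Omega)}$ and $\|\theta\|_{L^2(\Upsilon)}$, $\|\theta_e\|_{L^2(\Upsilon)}$ into the $\epsilon\|\nabla\theta\|^2$ terms on the left at the cost of $\epsilon^{-1}\|\theta\|^2$, which feeds into Gronwall. (4) Bound the data terms $(\partial_t\eta,\theta)$ etc.\ by Young, using \eqref{dtNLPJH1}. (5) Integrate in $t$, handle the double time integral $\int_0^t\!\int_0^s\|\theta(r)\|^2_{L^2(\Upsilon)}\,dr\,ds$ by Fubini and trace, then apply Gronwall's inequality; since $\theta(0)=W(0)-U(0)$ can be made zero (or $O(h^{k+1})$) by the ``appropriately chosen'' initial data, the constant stays $h$-independent. (6) Finally reconstruct $e$ via the triangle inequality $\|e\|_{\infty;0}\le \|\eta\|_{\infty;0}+\|\theta\|_{\infty;0}$ and $\int_0^T\|\nabla e\|^2 \le 2\int_0^T(\|\nabla\eta\|^2+\|\nabla\theta\|^2)$, then invoke Theorem \ref{p_teta} to replace all $\eta$-norms by $Ch^{2k}$ times the stated Sobolev norms of $u, b, u_e$ and their time derivatives.

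I expect the main obstacle to be the careful bookkeeping of the non-Lipschitz, history-dependent flux $g_e$ through the open-probability $P(t,u)$: the $\int_0^t|\eta(s)|\,ds$ and $\int_0^t|\theta(s)|\,ds$ terms on $\Upsilon$ must be moved inside the time integration, split via trace into gradient and $L^2$ parts, and then the resulting nested time integrals controlled so that Gronwall still closes without any inverse power of $h$. A secondary delicate point is ensuring the coefficient in front of $\|\nabla\theta\|^2$, $\|\nabla\theta_e\|^2$ stays strictly positive after all the trace-absorption steps — this forces the choice of $\epsilon$ small relative to $\min\{D_c, D_b, D_e\}$ and mirrors the role of ``$\lambda$ large enough'' in the projection estimates, though here $\lambda$ is replaced by the time-derivative term providing the needed $\|\theta\|^2$ coercivity. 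Everything else — the $f$-term estimates, the $\partial_t\eta$ estimates, the final triangle-inequality reconstruction — is routine given Theorems \ref{p_teta} and the lemmas of Section \ref{Lemmas}.
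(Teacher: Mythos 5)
Your proposal is correct and follows essentially the same route as the paper's proof: the same splitting $e=\eta+(W-U)$ with the Galerkin projection of Theorem \ref{p_teta}, testing the subtracted error equations with the discrete error itself, Lemma \ref{g12u1u2} plus the trace inequality to absorb the interface terms, Fubini for the nested time integral from the ODE memory, and Gronwall to close. The only cosmetic difference is that you combine the two $\Upsilon$-flux terms into a single pairing against $\theta_e-\theta$ as in Lemma \ref{NLP1L}, whereas the paper bounds each separately before summing; the estimates are identical either way.
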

\begin{proof}
	Letting 
	$\xi = W-U,\,  \xi_b = W_b-U_b,\,  \xi_e = W_e-U_e$
	and 
	$\eta= u-W,\,  \eta_b=b-W_b,\,  \eta_e=u_e-W_e$, 
	subtracting \eqref{bomegaU} from \eqref{bomegaW}, we have
	\begin{equation}\label{bomegaXi} 
		\left\{
		\begin{aligned}
			\left(
			{\partial_t \xi},v
			\right)
			+&
			a_c(\xi,v)  \\
			&=
			<g_c(W)-g_c(U),v>_{\partial\Omega}
			-
			<g_e(W,W_e)-g_e(U,U_e),v>_{\Upsilon}
			\\
			&  +
			(f(W_b,W)-f(U_b,U),v)  \\
			&  
			+\lambda (\eta,v)
			-\left(
			{\partial_t \eta} 
			, v
			\right)  	
			+
			(f(b,u)-f(W_b,W),v),
			\\
			\left(
			{\partial_t \xi_b} ,v_b
			\right)
			&+
			a_b(\xi_b,v_b) \\
			&=
			(f(W_b,W)-f(U_b,U),v_b)
			+(\eta_b,v_b)
			\\
			&  
			-\left(
			{\partial_t \eta_b} 
			, v_b
			\right)  	
			+
			(f(b,u)-f(W_b,W),v_b),
			\\
			\left(
			{\partial_t \xi_e} ,v_e
			\right)
			&+
			a_e(\xi_e,v_e) \\
			&=
			<g_e(W,W_e)-g_e(U,U_e),v_e>_{\Upsilon}
			+\lambda (\eta_e,v_e)
			-\left(
			{\partial_t \eta_e} 
			, v_e
			\right). 
		\end{aligned}
		\right. 
	\end{equation} 
	From Lemma \ref{g12u1u2} and letting $W(0)=U(0)$, we have
	\begin{align}
		|g_c(W)-g_c(U)|
		&\leq 
		K_1|\xi|,
		\\
		|g_e(W,W_e)-g_e(U,U_e)|
		&\leq 
		K_2\left(|\xi|+|\xi_e|+\int_0^t|\xi(s)|{\rm d}s\right),
		\\
		|f(W_b,W)-f(U_b,U)|
		&\leq
		K_3(|\xi_b|+|\xi_e|).
	\end{align}
	Then in \eqref{bomegaXi}, let
	$
	v = \xi,\, v_b=\xi_b,\, v_e = \xi_e
	$, we obtain \eqref{bomegaNorm1} - \eqref{bomegaNorm3}:
	\begin{align}
		\frac{1}{2}\frac{{\rm d}}{{\rm d} t}
		\|\xi\|^2 
		&+
		{D_c} 
		\|\nabla \xi\|^2 \nonumber \\
		&\leq
		C_1
		\left(
		\|\xi\|^2_{L^2(\partial\Omega)}
		+
		\|\xi\|^2_{L^2({\Upsilon})}
		+
		\|\xi_e\|^2_{L^2({\Upsilon})}
		+
		\int_0^t\|\xi(s)\|^2_{L^2({\Upsilon})}{\rm d}s
		\right)
		\nonumber\\
		&
		+
		C_1
		\left(
		\|\xi_b\|^2 
		+
		\|\xi\|^2 
		\right)
		\label{bomegaNorm1}\\
		&
		+\frac{\lambda+K_3}{2} \|\eta\|^2 
		+\frac{1}{2}
		\left\|
		{\partial_t \eta} 
		\right\|^2  	
		+
		\frac{K_3}{2} \|\eta_b\|^2 ,
		\nonumber
	\end{align}
	\begin{align}
		\frac{1}{2}\frac{{\rm d}}{{\rm d} t}
		\|\xi_b\|^2 
		+
		{D_b} 
		\|\nabla \xi_b\|^2 
		&\leq
		C_1
		\left(
		\|\xi_b\|^2 
		+
		\|\xi\|^2 
		\right) 
		\label{bomegaNorm2}
		\\
		&
		+\frac{1+K_3}{2}\|\eta_b\|^2 
		+\frac{1}{2}
		\left\|
		{\partial_t \eta_b} 
		\right\|^2  	
		+
		\frac{K_3}{2} \|\eta\|^2 ,
		\nonumber
	\end{align}
	\begin{align}
		\frac{1}{2}\frac{{\rm d}}{{\rm d} t}
		\|\xi_e\|^2 
		&+
		{D_e} 
		\|\nabla \xi_e\|^2 \nonumber \\
		&\leq 
		C_3
		\left(
		\|\xi\|^2_{L^2({\Upsilon})}
		+
		\|\xi_e\|^2_{L^2({\Upsilon})}
		+
		\int_0^t\|\xi(s)\|^2_{L^2({\Upsilon})}{\rm d}s
		+
		\|\xi_e\|^2 
		\right)	\label{bomegaNorm3}
		\\
		&
		+\frac{\lambda}{2}\|\eta_e\|^2 
		+\frac{1}{2}
		\left\|
		{\partial_t \eta_e} 
		\right\|^2 . 		\nonumber
	\end{align}
	The sum of \eqref{bomegaNorm1}, \eqref{bomegaNorm2} and \eqref{bomegaNorm3} gives the following inequality: 
	\begin{align}\label{bomegaNorm510}
		\frac{1}{2}\frac{{\rm d}}{{\rm d} t}
		(
		\|\xi\|^2 
		&+
		\|\xi_b\|^2 
		+
		\|\xi_e\|^2 
		)
		+
		{D_c}
		\|\nabla \xi\|^2 
		+
		{D_b} 
		\|\nabla \xi_b\|^2 
		+
		{D_e} 
		\|\nabla \xi_e\|^2 
		\nonumber 
		\\ 
		&\leq
		C_1
		\left(
		\|\xi\|^2_{L^2(\partial\Omega)}
		+
		\|\xi\|^2_{L^2({\Upsilon})}
		+
		\|\xi_e\|^2_{L^2({\Upsilon})}
		+
		\int_0^t\|\xi(s)\|^2_{L^2({\Upsilon})}{\rm d}s
		\right)
		\nonumber
		\\
		&
		+
		C_2
		\left(
		\|\xi_b\|^2 
		+
		\|\xi\|^2 
		+
		\|\xi_e\|^2 
		\right)
		\\
		&
		+
		C_3
		\left( 
		\|\eta\|^2 
		+
		\left\|
		{\partial_t \eta}
		\right\|^2  	
		+
		\|\eta_b\|^2 
		+
		\left\|
		{\partial_t \eta_b} 
		\right\|^2  	
		\right.
		\nonumber \\
		&+
		\left.
		\|\eta_e\|^2 
		+
		\left\|
		{\partial_t \eta_e} 
		\right\|^2 	
		\right).
		\nonumber
	\end{align}
	By Lemma \ref{trace1} and 
	$$
	\int_0^t\int_0^s\|\xi(w)\|^2_{L^2({\Upsilon})}{\rm d}w{\rm d}s
	\leq
	\int_0^t\int_0^t\|\xi(w)\|^2_{L^2({\Upsilon})}{\rm d}w{\rm d}s
	\leq
	T\int_0^t\|\xi(s)\|^2_{L^2({\Upsilon})}{\rm d}s
	$$
	from \eqref{bomegaNorm510}, we have
	\begin{align}\label{bomegaNorm511}
		\|\xi(t)\|^2 
		&+
		\|\xi_b(t)\|^2 
		+
		\|\xi_e(t)\|^2 
		+
		\int_0^t
		\|\nabla \xi(s)\|^2 
		+
		\|\nabla \xi_b(s)\|^2 
		+
		\|\nabla \xi_e(s)\|^2 
		{\rm d}s
		\nonumber 
		\\ 
		&\leq
		C_1
		\int_0^t
		\|\xi_b(s)\|^2 
		+
		\|\xi(s)\|^2 
		+
		\|\xi_e(s)\|^2 
		{\rm d}s
		\\
		&
		+
		C_2
		\int_0^t
		\|\eta\|^2 
		+
		\left\|
		{\partial_t \eta} 
		\right\|^2  	
		+
		\|\eta_b\|^2 
		+
		\left\|
		{\partial_t \eta_b} 
		\right\|^2  +
		\|\eta_e\|^2 
		+
		\left\|
		{\partial_t \eta_e} 
		\right\|^2 	
		{\rm d}s.
		\nonumber
	\end{align}
	From \eqref{bomegaNorm511}, by Gronwall's Lemma, the following estimate can be obtained:
	\begin{align}
		\|\xi(t)\|^2 
		&+
		\|\xi_b(t)\|^2 
		+
		\|\xi_e(t)\|^2 \nonumber 
		\\
		&+
		\int_0^t
		\|\nabla \xi(s)\|^2 
		+
		\|\nabla \xi_b(s)\|^2 
		+
		\|\nabla \xi_e(s)\|^2 
		{\rm d}s
		\nonumber 
		\\ 
		&\leq
		C
		\int_0^t
		\|\eta\|^2 
		+
		\left\|
		{\partial_t \eta} 
		\right\|^2  	
		+
		\|\eta_b\|^2 
		+
		\left\|
		{\partial_t \eta_b} 
		\right\|^2  	+
		\|\eta_e\|^2 
		+
		\left\|
		{\partial_t \eta_e} 
		\right\|^2 	
		{\rm d}s,
		\nonumber
	\end{align}
	then by $\xi=e-\eta,\, \xi_b=e_b-\eta_b,\, \xi_e=e_e-\eta_e,$ we have
	\begin{align}
		\|e\|^2_{ \infty;0}
		&+
		\|e_b\|^2_{ \infty;0}
		+
		\|e_e\|^2_{ \infty;0}
		+
		\int_0^T
		\|\nabla e(s)\|^2 
		+
		\|\nabla e_b(s)\|^2 
		+
		\|\nabla e_e(s)\|^2 
		{\rm d}s
		\nonumber 
		\\
		&\leq
		C
		\left(
		\|\eta\|^2_{ \infty;1}
		+
		\left\|
		{\partial_t \eta} 
		\right\|^2_{ \infty;1} 	
		+
		\|\eta_b\|^2_{ \infty;1}
		+
		\left\|
		{\partial_t \eta_b} 
		\right\|^2_{ \infty;1} 	
		\right.
		\nonumber \\
		&
		+
		\|\eta_e\|^2_{ \infty;1}
		+\left.
		\left\|
		{\partial_t \eta_e} 
		\right\|^2_{ \infty;1} 	
		\right)\\
		&+
		C\left(
		\|\eta(0)\|^2_{H^1(\Omega_c)}
		+
		\|\eta_b(0)\|^2_{H^1(\Omega_c)}
		+
		\|\eta_e(0)\|^2_{H^1(\Omega_e)}
		\right),
		\nonumber
	\end{align}
	and with Theorem \ref{p_teta},  the proof is completed. 
\end{proof}
%	where
%$
%	e=u-U,\quad, e_b=b-U_b,\quad e_e = u_e-U_e.
%$
%	\begin{theorem} If the exact solutions $u, u_e, b$ are smooth enough, then with appropriate chosen $U(0), U_e(0), U_b(0)$ and $P_k$ elements, we have 
	%		\begin{eqnarray}
		%			\|e\|^2_{ \infty;0}
		%			&+&
		%			\|e_b\|^2_{ \infty;0}
		%			+
		%			\|e_e\|^2_{ \infty;0} \nonumber \\
		%			&+&
		%			\int_0^T
		%			\|\nabla e(s)\|^2 
		%			+
		%			\|\nabla e_b(s)\|^2 
		%			+
		%			\|\nabla e_e(s)\|^2 
		%			{\rm d}s
		%			\nonumber 
		%			\\
		%			&\leq&
		%			Ch^k
		%			\left(
		%			\|u\|^2_{\infty;k+1}
		%			+
		%			\left\|
		%			\frac{\partial u}{\partial t}
		%			\right\|^2_{\infty;k+1} 	
		%			+
		%			\|b\|^2_{\infty;k+1}
		%			+
		%			\left\|
		%			\frac{\partial b}{\partial t}
		%			\right\|^2_{\infty;k+1} \right. \nonumber \\
		%			&&	
		%			+
		%			\left.
		%			\|u_e\|^2_{\infty;k+1}
		%			+
		%			\left\|
		%			\frac{\partial u_e}{\partial t}
		%			\right\|^2_{\infty;k+1} 	
		%			\right)
		%			\nonumber
		%		\end{eqnarray}
	%		where $h$ is the mesh size, $C$ doesn't depend on $h$.
	%	\end{theorem}

\section{Error Estimate for the Fully Discrete Implicit-Explicit Scheme}\label{FD}
We define $d_t U^{n+1} = (U^{n+1} -U^{n} )/{\Delta t}$, where  $\Delta t = T/N$, $N$ is a positive integer, $n=0,1,2,\cdots, N-1$, $U^n \in S_k(\Omega_c)$. Also $d_t U_b^{n+1}$, $d_t U_e^{n+1}$ can be defined similarly, where $U_b^{n} \in S_k(\Omega_c)$, $U_e^{n} \in S_k(\Omega_e)$.  Then let $v, v_b\in S_k(\Omega_c)$, $v_e \in S_k(\Omega_e)$ and $t_n=n\Delta t$, we have the fully discrete form of the model:
\begin{equation} \label{FDFEM}
	\left\{
	\begin{aligned}
		\left(
		d_t{U^{n+1} } ,v
		\right)
		+
		a_c( U^{n+1},  v) 
		=&
		<g_c(U^n),v>_{\partial\Omega}
		-
		<g_e(U^n,U_e^n),v>_{\Upsilon} 
		\\
		&+
		(f(U_b^n,U^n),v)
		\\
		\left(
		d_t{U^{n+1}_b} 
		,v_b
		\right)
		+
		a_b(  U_b^{n+1},  v_b) 
		&=
		(f(U_b^n,U^n),v_b) 
		\\
		\left(
		d_t{U^{n+1}_e } 
		,v_e
		\right)
		+
		a_e(  U_e^{n+1}, v_e) 
		&=
		<g_e(U^n,U_e^n),v_e>_{\Upsilon},
	\end{aligned}
	\right. 
\end{equation} 
for the ODE part, we employ
$U_h(t,x) = \sum\limits_{i=0}^{n}\phi_i(t)U^i(x)$ for $0\leq t\leq t_n,$
where $\phi_i$ is the one-dimensional hat function and $\phi_i(t_i)=1$. 
Let $e^n= u(t_n)-U^n$, $e_b^n=b(t_n)-U_b^n,$  $e_e^n=u_e(t_n) - U_e^n$, where $u, b, u_e$ are the exact solutions for \eqref{uomega1}-\eqref{bBdyonOmega2}, then we have the following theorem: 
\begin{theorem} If the exact solutions $u, b, u_e$ are sufficiently smooth, then with $P_k$ elements, $k\geq 1$, appropriately chosen $U^0, U_b^0, U_e^0$, and sufficiently small ${\Delta t}$ which doesn't depend on the spatial mesh size, for any $n\leq N-1$, we have 
	\begin{align*}
		\|e^{n+1}\|^2 
		&+
		\|e_b^{n+1}\|^2 
		+
		\|e_e^{n+1}\|^2 
		\\
		&+
		\Delta t 
		\sum\limits_{l=1}^{n+1}
		\|\nabla e^{l}\|^2
		+
		\Delta t 
		\sum\limits_{l=1}^{n+1}
		\|\nabla e_b^{l}\|^2
		+
		\Delta t 
		\sum\limits_{l=1}^{n+1}
		\|\nabla e_e^{l}\|^2
		\leq
		C
		(
		\Delta t^2+h^{2k}
		),
		\nonumber
	\end{align*}
	where $\Delta t$ is the time-step size, $h$ is the mesh size, $C$ does not depend on $n, h$ and $\Delta t$.
\end{theorem}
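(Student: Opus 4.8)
The plan is to follow the same error-splitting strategy used for the semi-discrete method, now coupled with a discrete Gronwall argument. Since we work with the modified system, the fluxes $\bar g_c,\bar g_e$ and the reaction term are globally Lipschitz with constants independent of the solutions (Lemma \ref{g12u1u2}), which is what makes the energy estimates below close. First I would introduce the Galerkin projection $W,W_e$ of $u,u_e$ from \eqref{NLPJ} together with the standard elliptic projection $W_b$ of $b$, and decompose the errors as $e^n=\eta(t_n)+\xi^n$, $e_b^n=\eta_b(t_n)+\xi_b^n$, $e_e^n=\eta_e(t_n)+\xi_e^n$ with $\xi^n=W(t_n)-U^n$, $\xi_b^n=W_b(t_n)-U_b^n$, $\xi_e^n=W_e(t_n)-U_e^n$. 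The $\eta$-parts (and their time derivatives $\partial_t\eta,\partial_t\eta_b,\partial_t\eta_e$) are already controlled by Theorem \ref{p_teta} and the classical projection estimates for $b$, so they account for the $h^{2k}$ contribution; it remains to estimate $\xi^n,\xi_b^n,\xi_e^n$.

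Second, I would derive the error equations for the $\xi$'s by evaluating \eqref{NLPJ} at $t=t_{n+1}$, subtracting \eqref{FDFEM}, and isolating the truncation quantities. This yields a consistency term $(\partial_t W(t_{n+1})-d_t W^{n+1},v)$ which by Taylor expansion is $O(\Delta t)$ in the relevant norm (controlled by $\|\partial_{tt}u\|$ plus $\partial_t\eta$), a term $\lambda(\eta(t_{n+1}),v)$, and the crucial explicit-versus-implicit lag terms $f(U_b^n,U^n)-f(b(t_{n+1}),u(t_{n+1}))$, $\bar g_c(U^n)-\bar g_c(u(t_{n+1}))$, $\bar g_e(U^n,U_e^n)-\bar g_e(u(t_{n+1}),u_e(t_{n+1}))$. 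Each of these I would split into three pieces: a time-lag piece $F(\cdot(t_n))-F(\cdot(t_{n+1}))$, bounded by $\Delta t$ times a time-regularity norm of $u,b,u_e$; a projection piece $F(W^n,\dots)-F(u(t_n),\dots)$, bounded by $\eta^n$-norms; and the genuine error piece $F(U^n,\dots)-F(W^n,\dots)$, bounded by $|\xi^n|+|\xi_b^n|+|\xi_e^n|$ via Lemma \ref{g12u1u2}. For $\bar g_e$ the last piece additionally produces a discrete memory term $\Delta t\sum_{l\le n}\|\xi^l\|_{L^2(\Upsilon)}$ plus a quadrature error from replacing $\int_0^{t_n}|\xi(s)|\,{\rm d}s$ by its trapezoidal sum; choosing $U^0,U_b^0,U_e^0$ as the projections removes the initial-value contribution in that estimate.

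Third, I would test the $\xi$-, $\xi_b$-, $\xi_e$-equations with $v=\xi^{n+1}$, $v_b=\xi_b^{n+1}$, $v_e=\xi_e^{n+1}$, use the identity $(\xi^{n+1}-\xi^n,\xi^{n+1})=\tfrac12\|\xi^{n+1}\|^2-\tfrac12\|\xi^n\|^2+\tfrac12\|\xi^{n+1}-\xi^n\|^2$ for the discrete time derivative, sum the three inequalities, multiply by $\Delta t$ and sum over $n$ from $0$ to some $m\le N-1$. The boundary $L^2$-norms $\|\xi^l\|^2_{L^2(\partial\Omega)}$ and $\|\xi^l\|^2_{L^2(\Upsilon)}$ on the right are handled by Lemma \ref{trace1}: the $\epsilon\|\nabla\xi^l\|^2$ parts get absorbed into $D_c\Delta t\sum\|\nabla\xi^l\|^2$ (and the $D_b,D_e$ analogues) on the left for a fixed small $\epsilon$, while the $\epsilon^{-1}\|\xi^l\|^2$ parts are retained for Gronwall; the memory double sum is controlled by $\Delta t\sum_n\Delta t\sum_{l\le n}\|\xi^l\|^2\le T\,\Delta t\sum_l\|\xi^l\|^2$, exactly mirroring its continuous analogue. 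After collecting terms, the only obstruction is a leftover $C\Delta t\bigl(\|\xi^{m+1}\|^2+\|\nabla\xi^{m+1}\|^2\bigr)$ on the right (the price of explicitness); requiring $\Delta t$ small enough that $C\Delta t\le\tfrac12\min\{1,D_c,D_b,D_e\}$ absorbs it into the left side, and since $C$ is built only from Lipschitz constants, trace constants and $T$ — no inverse inequality is ever used — this restriction on $\Delta t$ is independent of $h$.

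Finally, the remaining right-hand side has the form $C\Delta t\sum_{l\le m}(\|\xi^l\|^2+\|\xi_b^l\|^2+\|\xi_e^l\|^2)+C(\Delta t^2+\text{projection terms})$, so the discrete Gronwall inequality gives the bound on $\|\xi^{m+1}\|^2+\|\xi_b^{m+1}\|^2+\|\xi_e^{m+1}\|^2+\Delta t\sum_{l=1}^{m+1}(\|\nabla\xi^l\|^2+\|\nabla\xi_b^l\|^2+\|\nabla\xi_e^l\|^2)$; combining with the $\eta$-estimates of Theorem \ref{p_teta} and the triangle inequality $e=\eta+\xi$ yields the stated $O(\Delta t^2+h^{2k})$ bound. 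I expect the main obstacle to be the bookkeeping of the explicit-to-implicit lag in the ODE-controlled flux $\bar g_e$ — in particular, producing the discrete memory term with a mesh-independent quadrature error and verifying that its double summation does not force a mesh-dependent step restriction — while the rest is a lengthy but routine energy-plus-discrete-Gronwall computation.
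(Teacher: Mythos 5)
Your proposal is correct and follows essentially the same route as the paper: the same $e^n=\eta(t_n)+\xi^n$ splitting through the nonlinear Galerkin projection, the same consistency/lag decomposition of the explicit flux terms, the same treatment of the ODE memory integral via a piecewise-linear-in-time interpolant (the paper's $W_h,U_h$ with hat functions is exactly your ``discrete sum plus quadrature error''), absorption of boundary terms through Lemma \ref{trace1}, and a discrete Gronwall step with a $\Delta t$ restriction that is $h$-independent because no inverse inequality is invoked. No gaps worth flagging.
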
	
\begin{proof}	
	Let
	$u^n=u(t_n),$  $d_t u^{n+1} = ({u^{n+1}-u^n})/{\Delta t}$ and similarly we have $d_t \eta^{n+1}$, $d_t \eta_b^{n+1},$ $d_t \eta_e^{n+1}$, where $\eta^n= u(t_n)-W(t_n)$, $\eta_b^n=b(t_n)-W_b(t_n),$  $\eta_e^n=u_e(t_n)-W_e(t_n)$, $W, W_b, W_e$ are the previously defined Galerkin projections for $u, b, u_e$.
	Then we have
	\begin{equation}\label{FDFEM2} 
		\left\{
		\begin{aligned}
			\left(
			d_tW^{n+1},v
			\right)
			+
			a_c( W^{n+1}, v) 
			= &
			<g_c(W^{n}),v>_{\partial\Omega}\\
			&-
			<g_e(W^{n},W_e^{n}),v>_{\Upsilon} \\
			& 
			+
			(f(W_b^{n},W^{n}),v) 
			+
			E^{n+1}(v),
			\\
			%%--------------------------------------------
			\left(
			d_t W_b^{n+1},v_b
			\right)
			+
			a_b( W_b^{n+1}, v_b) 
			&=
			(f(W_b^{n},W^{n}),v_b)+E_b^{n+1}(v_b),
			\\
			%%--------------------------------------------
			\left(
			d_t W_e^{n+1},v_e
			\right)
			+
			a_e(  W_e^{n+1}, v_e) 
			&= 
			<g_e(W^{n},W_e^{n}),v_e>_{\Upsilon}
			+E_e^{n+1}(v_e),
		\end{aligned}
		\right. 
	\end{equation}
	where
	\begin{align*}
		E^{n+1}(v)
		&= 
		<g_c(W^{n+1})-g_c(W^{n}),v>_{\partial\Omega}\\
		&-
		<g_e(W^{n+1},W_e^{n+1})-g_e(W^{n},W_e^{n}),v>_{\Upsilon}
		\nonumber\\ 
		&+
		(f(W_b^{n+1},W^{n+1})-f(W_b^{n},W^{n}),v) 
		-(d_t\eta^{n+1}, v)  \nonumber\\ 
		&+
		\left(
		d_tu^{n+1}-
		{\partial_t u^{n+1}},v
		\right)
		+\lambda (\eta^{n+1},v)	\\
		&
		+
		(f(b^{n+1},u^{n+1})-f(W_b^{n+1},W^{n+1}),v),
		\nonumber
		\\
		%%--------------------------------------------
		E_b^{n+1}(v_b)
		&=
		(f(W_b^{n+1},W^{n+1})-f(W_b^{n},W^{n}),v_b)
		-(d_t\eta^{n+1}_b, v_b)  \nonumber\\
		&+
		\left(
		d_tb^{n+1}-
		{\partial_t b^{n+1}},v_b
		\right)
		+(\eta_b^{n+1},v_b)
		\\
		&
		+
		(f(b^{n+1},u^{n+1})-f(W_b^{n+1},W^{n+1}),v_b),
		\nonumber
		\\
		%%--------------------------------------------
		E_e^{n+1}(v_e)
		&= 
		<g_e(W^{n+1},W_e^{n+1})-g_e(W^{n},W_e^{n}),v_e>_{\Upsilon}
		-(d_t\eta_e^{n+1}, v_e)  \nonumber \\
		&+
		\left(
		d_tu_e^{n+1}-
		{\partial_t u_e^{n+1}},v_e
		\right)
		+\lambda (\eta_e^{n+1},v_e).
	\end{align*}
	Subtracting \eqref{FDFEM} from \eqref{FDFEM2}, letting
	$\xi^{n+1} = W^{n+1}-U^{n+1},$ $\xi_b^{n+1} = W_b^{n+1}-U_b^{n+1},$ $\xi_e^{n+1} = W_e^{n+1}-U_e^{n+1}$,
	and choosing 
	$v = \xi^{n+1}, v_b = \xi_b^{n+1}, v_e = \xi_e^{n+1}$, with the definition $W_h(t,x) := \sum\limits_{i=0}^{n}\phi_i(t)W(t_i,x)$ and Lemma \ref{g12u1u2},
	we have the following equations \eqref{FDFEMer1}, \eqref{FDFEMer2} and \eqref{FDFEMer3} respectively
	\begin{align}
		&\frac{\|\xi^{n+1}\|^2}{2\Delta t}
		-
		\frac{\|\xi^{n}\|^2}{2\Delta t}
		+
		{D_c} 
		\|\nabla \xi^{n+1}\|^2\nonumber \\
		&\leq 
		C_1
		\left(
		\|\xi^n\|^2_{L^2(\partial\Omega)}
		+
		\|\xi^n\|^2_{L^2({\Upsilon})}
		+
		\|\xi_e^n\|^2_{L^2({\Upsilon})}
		+
		\int_0^{t_n}\|(W_h-U_h)(s)\|^2_{L^2({\Upsilon})}{\rm d}s
		\right)
		\nonumber		\\
		&
		+
		C_1
		(
		\|\xi_b^n\|^2 
		+
		\|\xi^n\|^2 
		+
		\|\xi^{n+1}\|^2 
		)
		\label{FDFEMer1}
		\\
		&
		+
		C_1
		(\|\xi^{n+1}\|^2_{L^2(\partial\Omega)}
		+
		\|\xi^{n+1}\|^2_{L^2({\Upsilon})}
		)
		+E^{n+1}(\xi^{n+1})
		\nonumber \\
		&
		+
		C_1
		\int_0^{t_n}\|(W-W_h)(s)\|^2_{L^2({\Upsilon})}{\rm d}s,\nonumber
	\end{align}
	\begin{align}
		%%------------------------------------------------------------
		\frac{\|\xi_b^{n+1}\|^2}{2\Delta t}
		&-
		\frac{\|\xi_b^{n}\|^2}{2\Delta t}
		+
		{D_b} 
		\|\nabla \xi_b^{n+1}\|^2\nonumber \\
		&\leq
		C_1
		\left(
		\|\xi_b^n\|^2 
		+
		\|\xi^n\|^2 
		+
		\|\xi_b^{n+1}\|^2 
		\right)
		+E_b^{n+1}(\xi_b^{n+1}),
		\label{FDFEMer2}
	\end{align}
	\begin{eqnarray}
		%%---------------------------------------------------------
		\frac{\|\xi_e^{n+1}\|^2}{2\Delta t}
		&-&
		\frac{\|\xi_e^{n}\|^2}{2\Delta t}
		+
		{D_e} 
		\|\nabla \xi_e^{n+1}\|^2\nonumber \\
		&\leq& 
		C_3
		\left(
		\|\xi^n\|^2_{L^2({\Upsilon})}
		+
		\|\xi_e^n\|^2_{L^2({\Upsilon})}
		+
		\int_0^{t_n}\|(W_h-U_h)(s)\|^2_{L^2({\Upsilon})}{\rm d}s
		\right)
		\nonumber
		\\
		&&
		+
		C_3
		\|\xi_e^{n+1}\|^2 
		+E_e^{n+1}(\xi_e^{n+1})
		\label{FDFEMer3} \\
		&&
		+
		C_3
		\int_0^{t_n}\|(W-W_h)(s)\|^2_{L^2({\Upsilon})}{\rm d}s.\nonumber
	\end{eqnarray}
	%--------------------------
	% $$ u_{n+1}^2-u_{n+1}u_{n} \geq u^2_{n+1}/2-u^2_{n}/2 $$
	%--------------------------
	Adding the equations \eqref{FDFEMer1}, \eqref{FDFEMer2} and \eqref{FDFEMer3} as $n$ varies leads to equations \eqref{FDFEMer1S}, \eqref{FDFEMer2S} and \eqref{FDFEMer3S} correspondingly 
	\begin{align}
		&\frac{\|\xi^{n+1}\|^2}{2\Delta t}
		-
		\frac{\|\xi^{0}\|^2}{2\Delta t}
		+
		{D_c} 
		\sum\limits_{l=1}^{n+1}
		\|\nabla \xi^{l}\|^2\nonumber \\
		&\leq 
		C_1
		\sum\limits_{l=0}^{n}\left(
		\|\xi^l\|^2_{L^2(\partial\Omega)}
		+
		\|\xi^l\|^2_{L^2({\Upsilon})}
		+
		\|\xi_e^l\|^2_{L^2({\Upsilon})}
		+
		\int_0^{t_l}\|(W_h-U_h)(s)\|^2_{L^2({\Upsilon})}{\rm d}s
		\right)
		\nonumber\\
		&
		+
		C_1
		\sum\limits_{l=0}^{n}(
		\|\xi_b^l\|^2 
		+
		\|\xi^l\|^2 
		+
		\|\xi^{l+1}\|^2 
		)
		\label{FDFEMer1S}
		\\
		&
		+
		C_1
		\sum\limits_{l=0}^{n}
		(\|\xi^{l+1}\|^2_{L^2(\partial\Omega)}
		+
		\|\xi^{l+1}\|^2_{L^2({\Upsilon})}
		)
		+
		\sum\limits_{l=0}^{n}
		E^{l+1}(\xi^{l+1})
		\nonumber \\
		&
		+
		C_1
		\sum\limits_{l=0}^{n}\int_0^{t_l}\|(W-W_h)(s)\|^2_{L^2({\Upsilon})}{\rm d}s,	\nonumber
	\end{align}
	\begin{align}
		\frac{\|\xi_b^{n+1}\|^2}{2\Delta t}
		&-
		\frac{\|\xi_b^{0}\|^2}{2\Delta t}
		+
		{D_b} 
		\sum\limits_{l=1}^{n+1}
		\|\nabla \xi_b^{l}\|^2\nonumber \\
		&\leq
		C_1
		\sum\limits_{l=0}^{n}
		\left(
		\|\xi_b^l\|^2 
		+
		\|\xi^l\|^2 
		+
		\|\xi_b^{l+1}\|^2 
		\right)
		+
		\sum\limits_{l=0}^{n}
		E_b^{l+1}(\xi_b^{l+1}), 
		\label{FDFEMer2S}
	\end{align}
	\begin{align}
		%%---------------------------------------------------------
		&\frac{\|\xi_e^{n+1}\|^2}{2\Delta t}
		-
		\frac{\|\xi_e^{0}\|^2}{2\Delta t}
		+
		{D_e} 
		\sum\limits_{l=1}^{n+1}\|\nabla \xi_e^{l}\|^2\nonumber \\
		&\leq 
		C_3
		\sum\limits_{l=0}^{n}
		\left(
		\|\xi^l\|^2_{L^2({\Upsilon})}
		+
		\|\xi_e^l\|^2_{L^2({\Upsilon})}
		+
		\int_0^{t_l}\|(W_h-U_h)(s)\|^2_{L^2({\Upsilon})}{\rm d}s
		\right)
		\label{FDFEMer3S}
		\\
		&
		+
		C_3
		\sum\limits_{l=0}^{n}
		\|\xi_e^{l+1}\|^2 
		+
		\sum\limits_{l=0}^{n}
		E_e^{l+1}(\xi_e^{l+1})
		+
		C_3
		\sum\limits_{l=0}^{n}
		\int_0^{t_l}\|(W-W_h)(s)\|^2_{L^2({\Upsilon})}{\rm d}s.\nonumber 
	\end{align}
	In the right hand side of \eqref{FDFEMer1S}, the following estimates \eqref{estFD1} to \eqref{estFD1ev} can be obtained. Notice that each $\phi_i(s), i=0,\cdots,l$, has a compact support and the product
	$\phi_i(s) \phi_j(s) = 0$, if $|j-i|>1$, so that we have
	\begin{align}\label{estFD1}
		\begin{split}
			\int_0^{t_l}\|(W_h-U_h)(s)\|^2_{L^2({\Upsilon})}
			{\rm d}s
			&=
			\int_0^{t_l}\|\sum\limits_{i=0}^{l}\phi_i(s)\xi^i\|^2_{L^2({\Upsilon})}
			{\rm d}s
			\\
			&\leq
			\int_0^{t_l}3\sum\limits_{i=0}^{l}|\phi_i(s)|^2
			\|\xi^i\|^2_{L^2({\Upsilon})}{\rm d}s\\
			&\leq
			2{\Delta t}\sum\limits_{i=0}^{l}
			\|\xi^i\|^2_{L^2({\Upsilon})},
		\end{split}
	\end{align}
	where  $\int_0^{t_l}3|\phi_i(s)|^2{\rm d}s\leq 2{\Delta t}$. Then from \eqref{estFD1}, with the Trace Theorem, we get
	\begin{align}\label{estFD2}
		\begin{split}
			\sum\limits_{l=0}^{n}
			\int_0^{t_l}
			\|(W_h-U_h)(s)\|^2_{L^2({\Upsilon})}
			{\rm d}s
			&\leq
			\sum\limits_{l=0}^{n}
			2{\Delta t} 
			\sum\limits_{i=0}^{n}
			\|\xi^i\|^2_{L^2({\Upsilon})}
			\\
			&\leq
			2T 
			\sum\limits_{i=0}^{n}
			\|\xi^i\|^2_{L^2({\Upsilon})}
			\\
			&\leq
			C \sum\limits_{l=0}^{n}
			\left(
			\|\xi^l\|^2 
			+
			\|\nabla \xi^l\|^2 
			\right).
		\end{split}
	\end{align}
	By Lemma \ref{trace1}, we have
	\begin{align}\label{xin+1}
		\begin{split}
			\Delta t C_1
			\sum\limits_{l=0}^{n}
			(\|\xi^{l+1}\|^2_{L^2(\partial\Omega)}
			+
			\|\xi^{l+1}\|^2_{L^2({\Upsilon})}
			)  
			\leq&
			\frac{\Delta t D_c}{8}
			\sum\limits_{l=1}^{n+1}
			\|\nabla \xi^{l}\|^2 \\
			&+
			\Delta t C
			\|\xi^{n+1}\|^2 
			+
			\Delta t C
			\sum\limits_{l=1}^{n}
			\|\xi^{l}\|^2 ,
		\end{split}
	\end{align}
	the first and second terms in the right hand side of \eqref{xin+1} can be canceled in \eqref{FDFEMer1S} if  $\Delta t$ is sufficiently small, however, this small $\Delta t$ doesn't depend on the spatial mesh size. Then  with Lemma \ref{g12u1u2}, Lemma \ref{trace1}, by the estimates of $\partial_t\eta$, $\partial_t\eta_e$ in Theorem \ref{p_teta} and the estimates for $\partial_t\eta_b$, which is easier to obtain from the definition of $W_b$ in Section \ref{SD}, we know $\partial_t W$, $\partial_t W_e$ and $\partial_t W_b$ are bounded with $H^1$ norm, so that   
	\begin{align}\label{estFD1ev}
		\begin{split}
			\sum\limits_{l=0}^{n}
			E^{l+1}(\xi^{l+1})
			\leq&
			C\Delta t +C\frac{h^{2k}}{\Delta t} 
			+
			\frac{D_c}{8}
			\sum\limits_{l=1}^{n+1}
			\|\nabla \xi^{l}\|^2  \\
			&+
			C
			\|\xi^{n+1}\|^2 
			+
			C
			\sum\limits_{l=1}^{n}
			\|\xi^{l}\|^2 ,
		\end{split}
	\end{align}
	where $C$ is a positive constant and doesn't depend on $n$.
	Other terms on the right-hand side of \eqref{FDFEMer1S} can be treated similarly.\\
	So that from \eqref{FDFEMer1S} and the estimates \eqref{estFD2} to \eqref{estFD1ev}, we have \eqref{FDFEMer1S1}
	\begin{align}
		\frac{1}{2}
		\|\xi^{n+1}\|^2
		&+
		\frac{3D_c}{4}\Delta t 
		\sum\limits_{l=1}^{n+1}
		\|\nabla \xi^{l}\|^2\nonumber \\
		&\leq 
		C_{c1}\Delta t
		\sum\limits_{l=0}^{n} 
		\|\xi^l\|^2_{L^2(\Omega)}
		+
		C_{c2}\Delta t
		\sum\limits_{l=0}^{n}
		\|\xi_b^l\|^2 
		+
		C_{c3}\Delta t
		\sum\limits_{l=0}^{n}
		\|\xi_e^l\|^2  
		\label{FDFEMer1S1}\\
		&
		+
		\frac{D_e}{8}\Delta t
		\sum\limits_{l=0}^{n}
		\|\nabla\xi_b^l\|^2 
		+
		C_{c4}
		\|\xi^{0}\|_{H^1(\Omega_c)}^2
		+
		C_c(\Delta t^2+h^{2k}).\nonumber
	\end{align}
	From \eqref{FDFEMer2S}, with similar estimates as \eqref{FDFEMer1S}, we have \eqref{FDFEMer2S1}
	\begin{align}
		&\frac{1}{2}
		\|\xi_b^{n+1}\|^2
		+
		{D_b} \Delta t
		\sum\limits_{l=1}^{n+1}
		\|\nabla \xi_b^{l}\|^2	\label{FDFEMer2S1}\\
		&\leq
		C_{b1}\Delta t
		\sum\limits_{l=0}^{n}
		\|\xi_b^l\|^2 
		+
		C_{b2}\Delta t
		\sum\limits_{l=0}^{n}
		\|\xi^l\|^2 
		+
		C_{b3}
		\|\xi_b^{0}\|^2_{H^1(\Omega_c)}
		+
		C_b(\Delta t^2+h^{2k}).\nonumber 	
	\end{align}
	From \eqref{FDFEMer3S}, with similar estimates as \eqref{FDFEMer1S}, we have \eqref{FDFEMer3S1}
	\begin{equation}\label{FDFEMer3S1}
		\begin{aligned}
		\frac{1}{2}\|\xi_e^{n+1}\|^2
		&+
		\frac{3D_e}{4}\Delta t 
		\sum\limits_{l=1}^{n+1}\|\nabla \xi_e^{l}\|^2  \\
		&\leq
		C_{e1}\Delta t
		\sum\limits_{l=0}^{n}
		\|\xi^l\|^2 
		+
		C_{e2}\Delta t
		\sum\limits_{l=0}^{n}
		\|\xi_e^l\|^2 
		+
		\frac{D_c}{8}\Delta t
		\sum\limits_{l=0}^{n}
		\|\xi^l\|^2 \\
		&
		+
		C_{e3}
		\|\xi_e^{0}\|^2_{H^1(\Omega_e)}
		+
		C_e(\Delta t^2+h^{2k}). 
		\end{aligned}
	\end{equation}
	$C_c, C_b, C_e$ in \eqref{FDFEMer1S1},\eqref{FDFEMer2S1} and \eqref{FDFEMer3S1} do not depend on $n$.
	
	Summing \eqref{FDFEMer1S1}, \eqref{FDFEMer2S1} and \eqref{FDFEMer3S1}, we have
	\begin{align*}
		\|\xi^{n+1}\|^2
		&+
		\|\xi_b^{n+1}\|^2
		+
		\|\xi_e^{n+1}\|^2
		+
		\Delta t 
		\sum\limits_{l=1}^{n+1}
		\|\nabla \xi^{l}\|^2
		+
		\Delta t 
		\sum\limits_{l=1}^{n+1}
		\|\nabla \xi_b^{l}\|^2
		+
		\Delta t 
		\sum\limits_{l=1}^{n+1}
		\|\nabla \xi_e^{l}\|^2
		\nonumber
		\\
		& \leq
		C_1\Delta t
		\sum\limits_{l=0}^{n} 
		\left(
		\|\xi^l\|^2
		+
		\|\xi_b^l\|^2 
		+
		\|\xi_e^l\|^2 
		\right)\\
		&\quad +
		C_{2}
		(
		\|\xi^{0}\|^2_{H^1(\Omega_c)}
		+
		\|\xi_b^{0}\|^2_{H^1(\Omega_c)}
		+
		\|\xi_e^{0}\|^2_{H^1(\Omega_e)}
		+
		\Delta t^2+h^{2k}
		).
	\end{align*}
	If $\Delta t$ is small enough, by discrete Gronwall's Inequality,  the estimate below follows
	\begin{align*}
		\|\xi^{n+1}\|^2
		&+
		\|\xi_b^{n+1}\|^2
		+
		\|\xi_e^{n+1}\|^2\\
		&+
		\Delta t 
		\sum\limits_{l=1}^{n+1}
		\|\nabla \xi^{l}\|^2
		+
		\Delta t 
		\sum\limits_{l=1}^{n+1}
		\|\nabla \xi_b^{l}\|^2
		+
		\Delta t 
		\sum\limits_{l=1}^{n+1}
		\|\nabla \xi_e^{l}\|^2
		\leq
		C
		(
		\Delta t^2+h^{2k}
		),
	\end{align*}
	where $C$ does not depend on $n, \Delta t$ and $h$. 	Then by $e^n =\xi^n+\eta^n,\, e_b^n = \xi_b^n+\eta_b^n,\, e_e^n=\xi_e^n+\eta_e^n,$ and Theorem \ref{p_teta}, we complete the proof.
\end{proof}

\section{Numerical Experiments}\label{Num}
In this section, we illustrate the convergence theorem for the fully discrete scheme \eqref{FDFEM} using Examples 1 and 2 below, and then apply the methodology to show the existence of $Ca^{2+}$ wave propagation numerically in Example 3 and 4. The ODE system \eqref{matrixf1} plays the key role for calcium wave initiation and propagation, {\color{black} which is solved by backward Euler's method. The numerical schemes in this section are implemented in FreeFem++, see \cite{Hecht12}.} All presented examples are in 2D, but theorems and simulations are also valid in 3D.

{\bf Example }1. In this problem we cosider two coupled PDEs with the unknowns $u$ and $u_e$:
\begin{equation}
	\left\{
	\begin{aligned}
		&{\partial_t u} -\Delta u = 
		f_1(x,y,t) 
		\quad \text{on } \Omega_c\times(0,T]\\
		& {\partial_t u_e} -\Delta u_e = f_2(x,y,t)  \quad \text{on } \Omega_e\times(0,T]
	\end{aligned}
	\right. 
\end{equation}
where $T=1.3$ and the boundary conditions are: $\partial_n u 
= g_1(x,y,t) \text{ on } \partial\Omega\times(0,T]$ and
\begin{align} 
	\partial_n u 
	&= 
	P(t,u)(u_e-u)%-\frac{1}{20}\frac{u}{(2+u)u_e}+u_e-u 
	+ g_2(x,y,t) \ \text{ on } \Upsilon\times(0,T]\\
	\partial_n u_e
	&= 
	P(t,u)(u-u_e)%+\frac{1}{20}\frac{u}{(2+u)u_e}+u-u_e 
	+ g_3(x,y,t) \ \text{ on } \Upsilon\times(0,T] 
\end{align}
Here, {\color{black} let $(n_x,n_y)$ be the unit outer normal vector on $\partial\Omega_c$ for $g_1, g_2$;
on $\Upsilon$ for $g_3$, then the exact solution $u, u_e$ and corresponding functions are listed below:
% \begin{table}\label{tab1}
% \caption{Coefficients $\alpha_i,\gamma_i$ for different $q$}
\begin{center}
\begin{tabular}{ |c|c| } 
 \hline
 $u$   & $e^{\frac{x^2+y^2+4t}{4}}/10$ \\ \hline
 $u_e$ & $e^{x+y}(\sin t +2)/8$\\ \hline
%  $\partial_x u$, $\partial_y u$  & $e^{\frac{x^2+y^2+4t}{4}}x/20$, $e^{\frac{x^2+y^2+4t}{4}}y/20$ \\ \hline
%  $\partial_x u_e$, $\partial_y u_e$ & $e^{x+y}(\sin t +2)/8$, $e^{x+y}(\sin t +2)/8$ \\ \hline
 $f_1$ & $-e^{x^2/4+y^2/4+t}(x^2+y^2)/40$\\ \hline
 $f_2$ & $e^{x+y}(\cos(t)-2(\sin(t)+2))/8$\\ \hline
 $g_1$  & $n_x e^{\frac{x^2+y^2+4t}{4}}x/20+n_y e^{\frac{x^2+y^2+4t}{4}}y/20$ \\ \hline
 $g_2$  & $n_x e^{\frac{x^2+y^2+4t}{4}}x/20+n_y e^{\frac{x^2+y^2+4t}{4}}y/20-P(t,u)(u_e-u)$ \\ \hline
 $g_3$  & $n_x e^{x+y}(\sin t +2)/8+n_y e^{x+y}(\sin t +2)/8+P(t,u)(u_e-u)$ \\ \hline
\end{tabular}
\end{center}
}
% \end{table}
% $f_1, f_2$, $g_1$ to $g_3$ and the initial conditions $u(x,y,0)$, $u_e(x,y,0)$ can be obtained from the exact solutions:
% \begin{eqnarray*}
% 	u = e^{\frac{x^2+y^2+4t}{4}}/10, \quad 
% 	u_e = e^{x+y}(\sin t +2)/8
% \end{eqnarray*}
The coefficients in ODE \eqref{matrixf1} are taken from \cite{Keizer1996}:
\begin{eqnarray}\label{odeparas}
	k_a^- = 28.8,\
	k_a^+ = 1500,\
	k_b^- = 385.9,\
	k_b^+ = 1500,\
	k_c^- = 0.1,\
	k_c^+ = 1.75
\end{eqnarray}
The initial conditions for \eqref{matrixf1} are chosen as:
$
c_1(0) = 0.5,\ o(0) = 0,\ c_2(0) = 0.5.
$

{\bf Example }2. In this example we consider three coupled PDEs with unknowns $u$, $b$ and $u_e$: 
\begin{equation}
	\left\{
	\begin{aligned}
		&{\partial_t u}-\Delta u = 
		f_1(x,y,t) -b u 
		\quad \text{on } \Omega_c\times(0,T] \\
		&{\partial_t b}-\Delta b = 
		f_2(x,y,t) -b u 
		\quad \text{on } \Omega_c\times(0,T] \\
		& {\partial_t u_e} -\Delta u_e = f_3(x,y,t)  \quad \text{on } \Omega_e\times(0,T] 
	\end{aligned}
	\right. 
\end{equation}
where $T=1.3$, and the boundary conditions are:
$\partial_n u = g_1(x,y,t)$ on $\partial\Omega\times(0,T]$, 
$\partial_n b = g_2(x,y,t)$  on  $\partial\Omega_c\times(0,T]$
and 
\begin{align} 
	\partial_n u
	&= 
	P(t,u)(u_e-u)-\frac{u}{(1+u)u_e} + g_3(x,y,t)\
	\text{ on } \Upsilon\times(0,T]\\
	\partial_n u_e
	&= 
	P(t,u)(u-u_e)+\frac{u}{(1+u)u_e} + g_4(x,y,t)\
	\text{ on } \Upsilon\times(0,T]
\end{align}
Here, {\color{black} let $(n_x,n_y)$ be the unit outer normal vector on $\partial\Omega_c$ for $g_1, g_2, g_3$;
on $\Upsilon$ for $g_4$, then the exact solution $u, b, u_e$ and corresponding functions are listed below:
% \begin{table}\label{tab1}
% \caption{Coefficients $\alpha_i,\gamma_i$ for different $q$}
\begin{center}
\begin{tabular}{ |c|c| } 
 \hline
 $b$   & $e^{xyt/16}$ \\ \hline
 $u$   & $e^{\frac{x^2+y^2+4t}{4}}/10$ \\ \hline
 $u_e$ & $e^{x+y}(\sin t +2)/16+1$\\ \hline
%  $\partial_x u$, $\partial_y u$  & $e^{\frac{x^2+y^2+4t}{4}}x/20$, $e^{\frac{x^2+y^2+4t}{4}}y/20$ \\ \hline
%  $\partial_x u_e$, $\partial_y u_e$ & $e^{x+y}(\sin t +2)/8$, $e^{x+y}(\sin t +2)/8$ \\ \hline
 $f_1$ & $-e^{x^2/4+y^2/4+t}(x^2+y^2)/40+bu$\\ \hline
 $f_2$ & $-e^{xyt/16}( -xy+t^2(x^2+y^2)/16 )/16+bu$\\ \hline
 $f_3$ &  $e^{x+y}(\cos(t)-2(\sin(t)+2))/8$\\ \hline
 $g_1$  & $n_x e^{\frac{x^2+y^2+4t}{4}}x/20+n_y e^{\frac{x^2+y^2+4t}{4}}y/20$ \\ \hline
 $g_2$  & $n_x e^{xyt/16}ty/16+n_y e^{xyt/16}tx/16$ \\ \hline
 $g_3$  & $n_x e^{\frac{x^2+y^2+4t}{4}}x/20+n_y e^{\frac{x^2+y^2+4t}{4}}y/20-P(t,u)(u_e-u)+\frac{u}{(1+u)u_e}$  \\ \hline
 $g_4$  & $n_x e^{x+y}(\sin t +2)/8+n_y e^{x+y}(\sin t +2)/8+P(t,u)(u_e-u)-\frac{u}{(1+u)u_e}$ \\ \hline
\end{tabular}
\end{center}
 }   
% $f_1$ -- $f_3$, $g_1$ -- $g_4$ and the initial conditions $u(x,y,0)$, $b(x,y,0)$, $u_e(x,y,0)$ can be obtained from the exact solutions:
% \begin{eqnarray*}
% 	u = e^{\frac{x^2+y^2+4t}{4}}/10,\   
% 	b = e^{xyt/16},  \
% 	u_e = e^{x+y}(\sin t +2)/16+1.
% \end{eqnarray*}
The ODE system and its initial conditions are the same as in Example 1.
Example 1 and 2 share the same space-time meshes (see Figure \ref{fig:2} for examples of spatial meshes). The spatial mesh sizes are: $\pi/8, \pi/16, \pi/32, \pi/64, \pi/128$. Letting $\Delta t$ be the time step size and $h$ the spatial mesh size, we have the relation between the two defined as $\Delta t = C h^2 $, where $C={32T}/(5\pi^2)$, $T=1.3$. Both examples have exact solutions. 
We define the error in the $L^2$ norm for $u$ as:
$\|u-u_h\| = \sum_{i=1}^{N}\|u(t_i)-u_h(t_i)\|/N$ 
and the error in $H^1$ semi-norm as:
$\|\nabla u-\nabla u_h\| = \sum_{i=1}^{N}\|\nabla u(t_i)-\nabla u_h(t_i)\|/N$ where $N=T/\Delta t$, $t_i = i\Delta t$, $u$ is the exact solution, $u_h$ is the numerical solution. Then let $b_h$, $u_e^h$ be the numerical solutions, the errors are defined similarly. In Figure \ref{tab3}, we show the 
convergence rates for $P_1$ elements in space, which are optimal.

{\bf Example }3. In this example we present a minimal system that produces $Ca^{2+}$ waves. Units were adjusted so that $t$ has unit $s$, $u, u_e$ have unit $\upmu$M:
\begin{equation}
	\left\{
	\begin{aligned}
		&{\partial_t u} -\Delta u = 0 
		\quad \text{on } \Omega_c\times(0,T]\\
		& {\partial_t u_e} -\Delta u_e = 0  \quad \text{on } \Omega_e\times(0,T]
	\end{aligned}
	\right. 
\end{equation}
where $T=12$ and the boundary conditions are: 
\begin{align}
	\partial_n u 
	&= C_3(1000-u)-C_2\frac{u}{1+u}-C_1\frac{u^2}{1+u^2}
	+f(x,y,t) \ {\text{ on } \partial\Omega}\times(0,T] \label{ueBdyonGamma441}  \\
	\partial_n u  
	&= 
	C_{1}^e P(t,u)(u_e-u)-C_2^e\frac{u}{(2+u)u_e}+C_3^e(u_e-u) 
	\ \text{ on } \Upsilon\times(0,T]\\
	\partial_n u_e
	&= 
	C_{1}^e P(t,u)(u-u_e)+C_2^e\frac{u}{(2+u)u_e}-C_3^e(u_e-u) 
	\ \text{ on } \label{ueBdyonGamma443} \Upsilon\times(0,T]
\end{align}
The initial conditions are $u(x,y,0) = 0.05$, $u_e(x,y,0)=180$, the ODE system is the same as in Example 1, and the initial conditions of the ODE are  $c_1(0) = 0.798$, $o(0) = 0,$ $c_2(0) = 0.202.$ 
In \eqref{ueBdyonGamma441}, the value $1,000$ is the extracellular Ca$^{2+}$ concentration, and $f$ is a calcium influx function: 
$f(x,y,t) = 3$ if $0.05\leq t\leq 0.65$ and $y-x\geq 2.5$; $f(x,y,t) = 0$ elsewhere. The coefficients in \eqref{ueBdyonGamma441} to \eqref{ueBdyonGamma443} are: $C_{1}^e = 0.17,$ $C_3^e = 1/150,$ $C_2^e = 8853.54,$ $C_3 = 1/540000,$ $C_1=C_2=19954C_3.$ 

Example 3 is constructed to show the initiation and propagation of a calcium wave which plays a critical role in neuronal signal processing, see Figure \ref{fig:ca2+wave}. For the computation, we use a similar geometry as in Examples 1 and 2. The radii of the two circles, with center (0,0), are 1 and 2, but with different meshes and elements, where $T=12$, $\Delta t=0.00375,$ and the spatial mesh size is $h=\pi/24$ with $P_3$ elements in space. %However, it is not physiologically reasonable since the diffusion coefficients are chosen to be $1$, smaller compared with those in \cite{Breit2018} and the buffer $b$ in $\Omega_c$ is removed. So that $Ca^{2+}$ diffuses and recovers to equilibrium state slowly which enables us to show more details of the propagation. 
With the help of Figure \ref{fig:ca2+wave}, a calcium wave can be described as follows. 
An extracellular stimulus produces Ca$^{2+}$ influx across the outer interface (the plasma membrane) raising the calcium concentration in parts of cytosol ($\Omega_c$) and ER ($\Omega_e$) see Figures \ref{fig:a}, \ref{fig:b}. In this example, the influx goes from $0.05s$ to $0.65s$. Then, an increased concentration activates the release of Ca$^{2+}$ from the ER at $t=0.72s$, see Figure \ref{fig:c}, which in turn generates the calcium spike and thus mediates global activation of the cell, see Figures \ref{fig:d} to \ref{fig:g}. The calcium concentration reaches its peak around $3.12s$ in Figure \ref{fig:h}, from the color bar \ref{fig:bar1}, $u$ varies from $0.05$ $\upmu$M to value greater than $1.5$ $\upmu$M. Meanwhile, $u_e$ decreases to the value around $176$ $\upmu$M. After reaching the peak, $u$ decreases and $u_e$ increases to the initial state, see \ref{fig:i} to \ref{fig:l}. The term \eqref{J_R} is essential for generating calcium wave, without the ODE system, there is no calcium wave.
{\color{black} Figure \ref{fig:po3} shows the open probability function $P(t,u)$ in equation \eqref{ptu} for RyR channels on the ER membrane. It ranges from 0 to 0.81.}
Instability of the scheme \eqref{FDFEM} can be observed with time step size larger than $0.00375$. 

\begin{figure}[ht!] 
	\centering
	\subfigure[][]{%
		\includegraphics[width=0.35\linewidth]{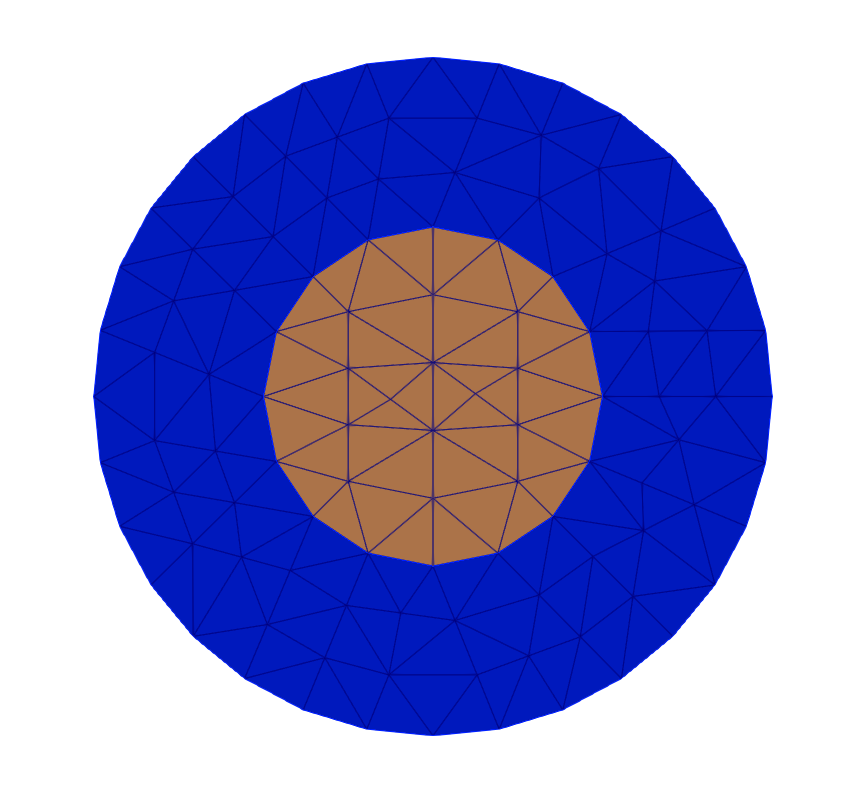}}%
	\hspace{4pt}%
	\subfigure[][]{%
		\includegraphics[width=0.35\linewidth]{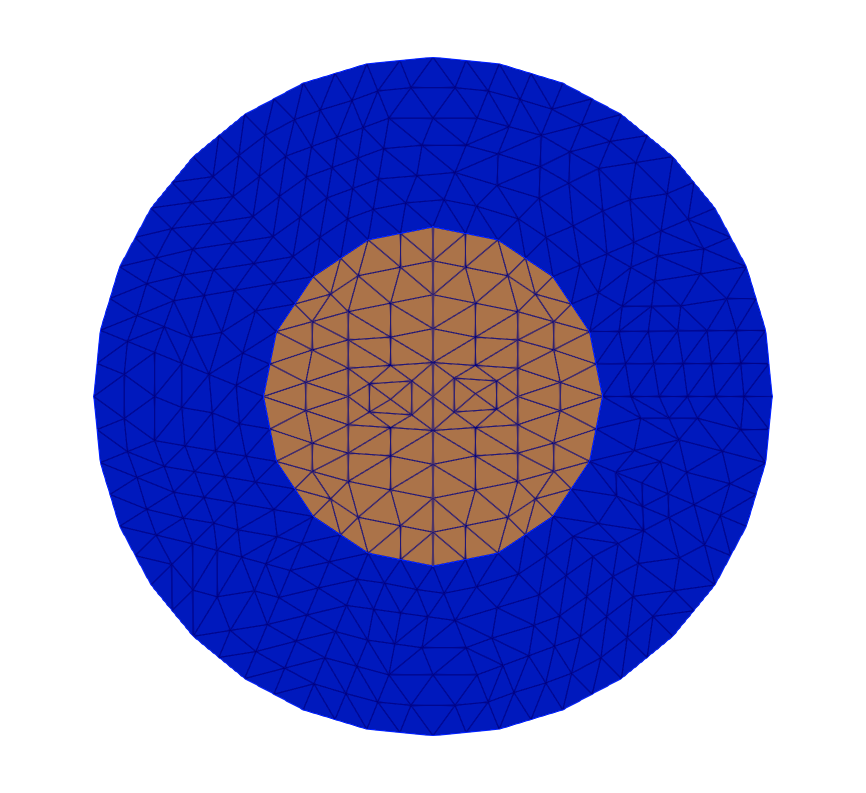}} 
	\caption[figures for meshes]{Meshes in which the black region is $\Omega_c$, the brown region is $\Omega_e$: (a) coarse with mesh size $\pi/8$; (b) refined mesh with mesh size $\pi/16$. The radius of the larger circle is 2, radius of the smaller one is 1.}
	\label{fig:2}%
\end{figure}

\begin{figure}[ht!]\label{tab3}
	\centering
	\subfigure[Convergence rates for Example 1]{%
		\includegraphics[width=0.48\linewidth]{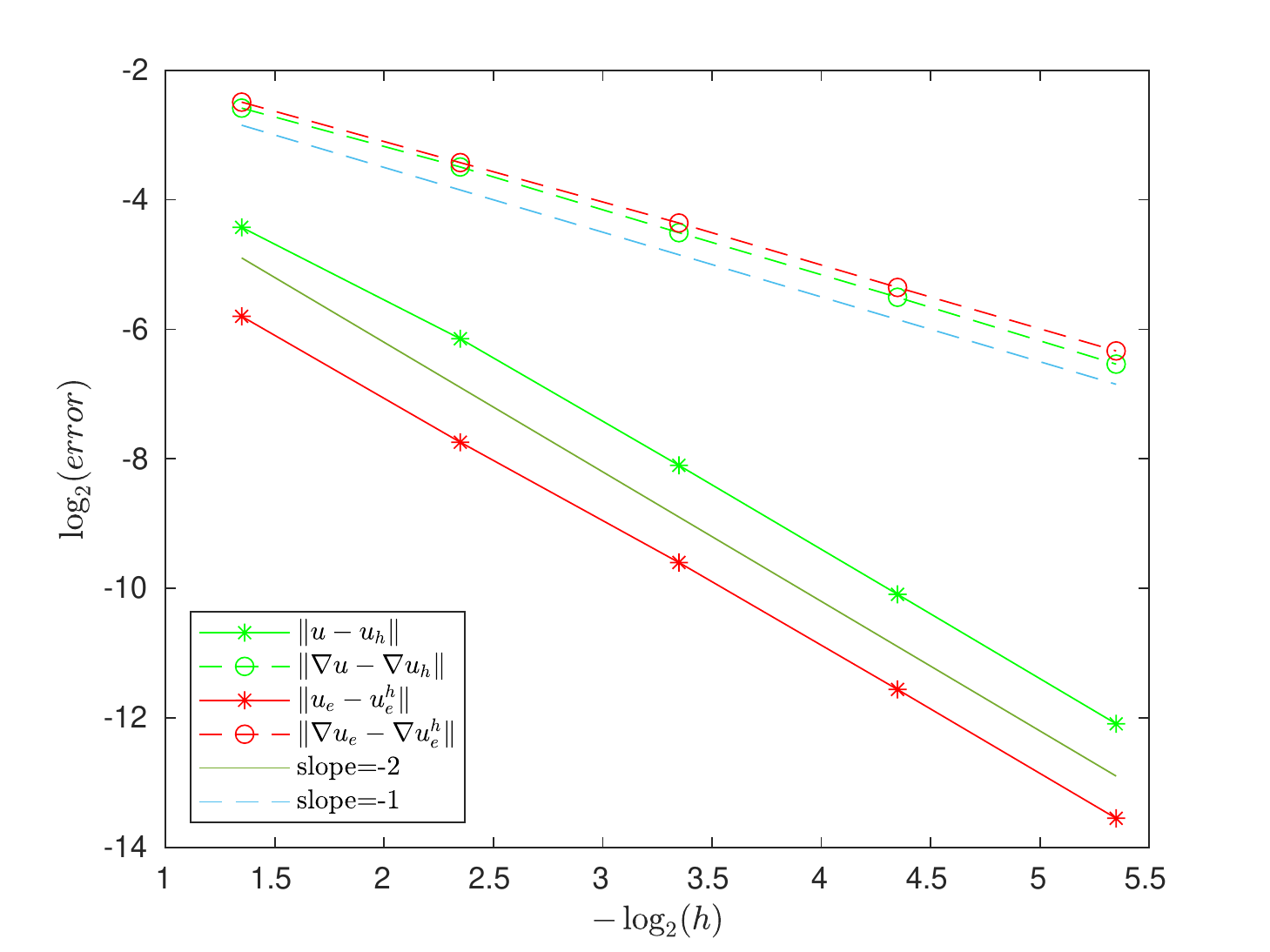}}%
	\hspace{4pt}%
	\subfigure[Convergence rates for Example 2]{%
		\includegraphics[width=0.48\linewidth]{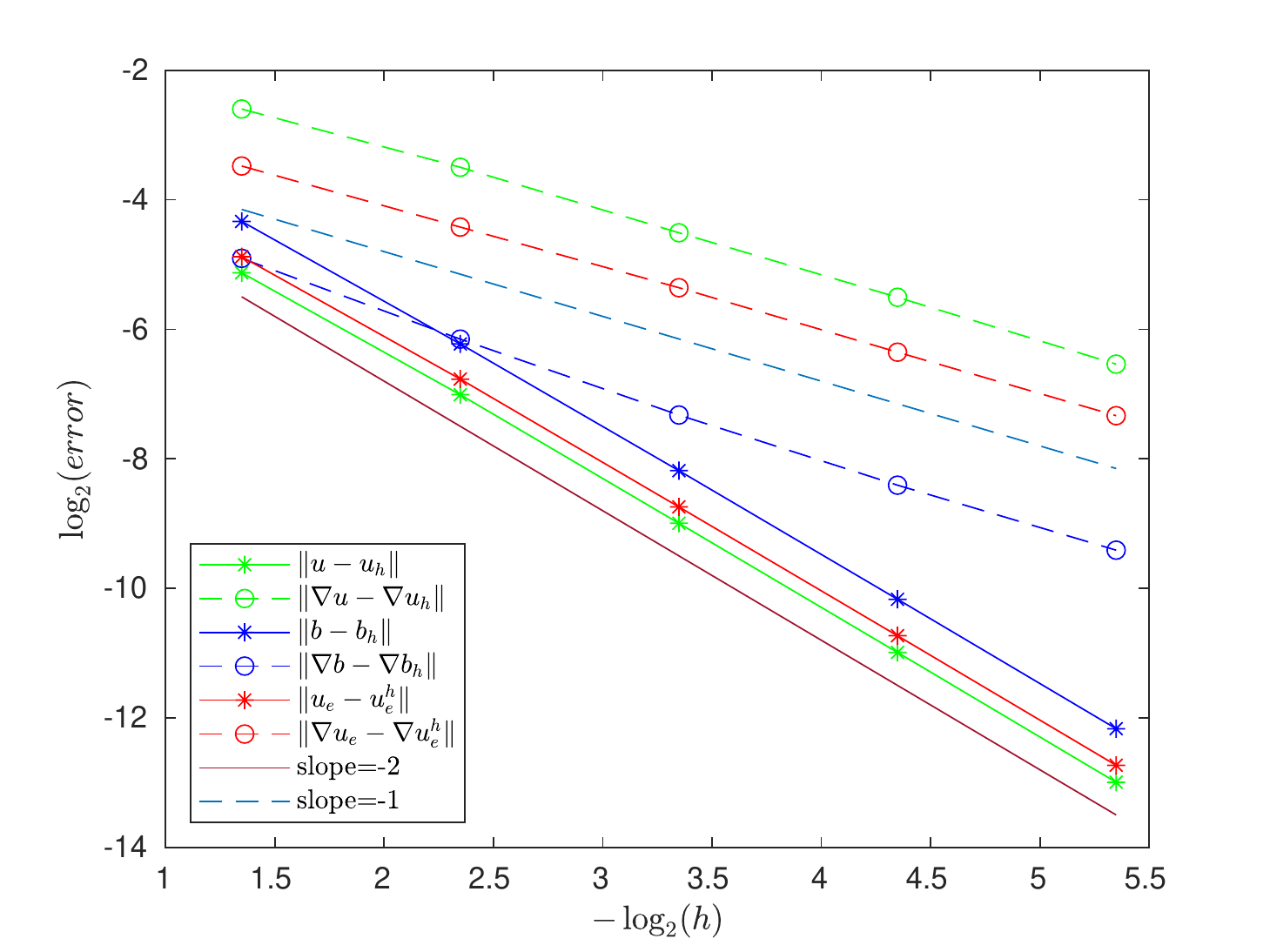}} 
	\caption[figures for meshes]{Both examples employ piecewise continuous $P_1$ element in space, for all solutions the errors with $L^2$ norm have rate 2, the errors with $H^1$ semi-norm have rate 1.}
	\label{fig:3}%
\end{figure}

\begin{figure}[ht!]
	\centering
	\subfigure[$t=0.12$]{\label{fig:a}
		\includegraphics[width=0.225\textwidth]{./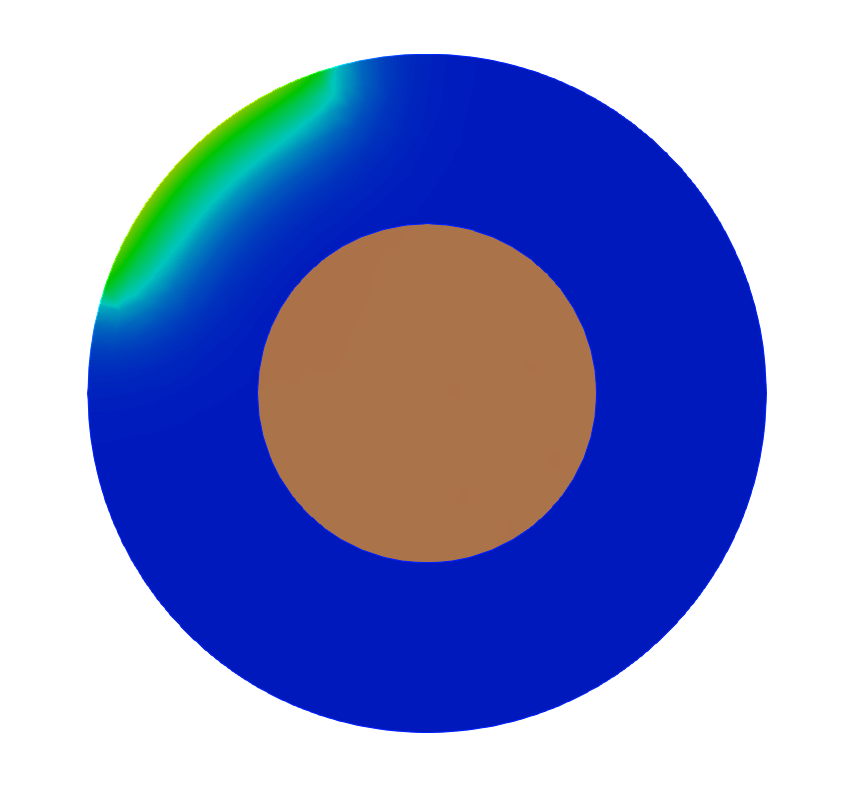}}
	\subfigure[$t=0.6$]{\label{fig:b}
		\includegraphics[width=0.225\textwidth]{./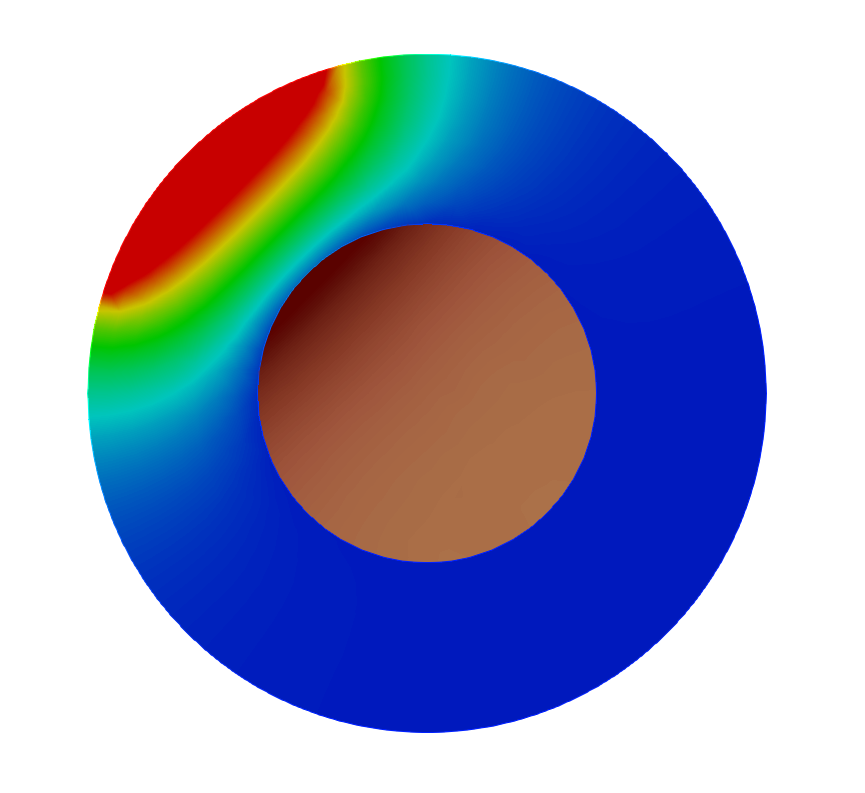}}
	\subfigure[$t=0.72$]{\label{fig:c}
		\includegraphics[width=0.225\textwidth]{./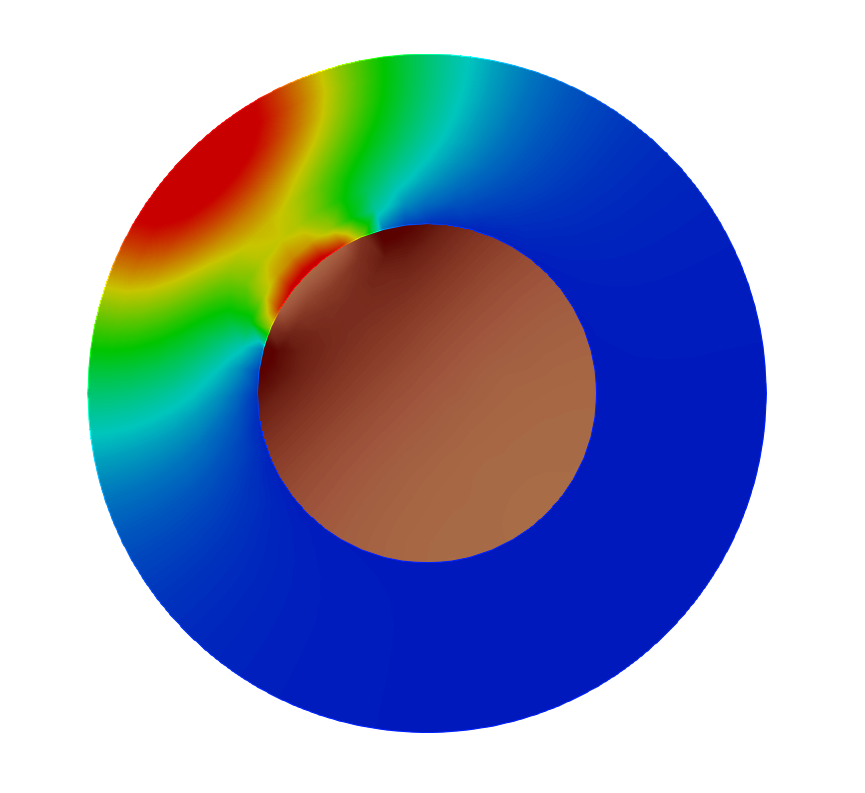}}
	\subfigure[$t=0.84$]{\label{fig:d}
		\includegraphics[width=0.225\textwidth]{./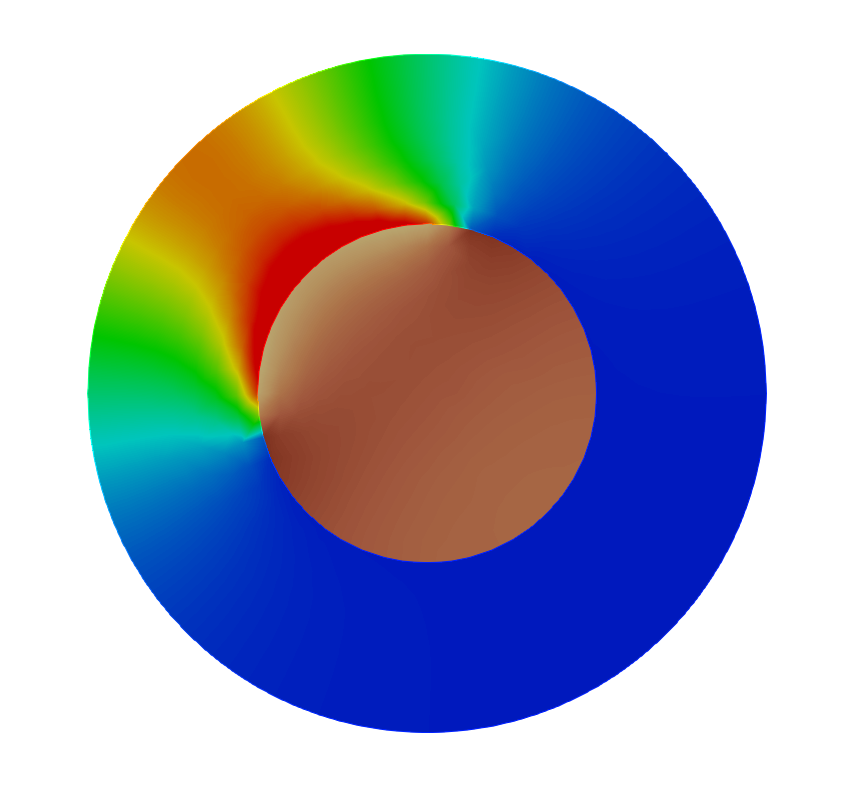}}
	\quad
	
	\subfigure[$t=1.08$]{\label{fig:e}
		\includegraphics[width=0.225\textwidth]{./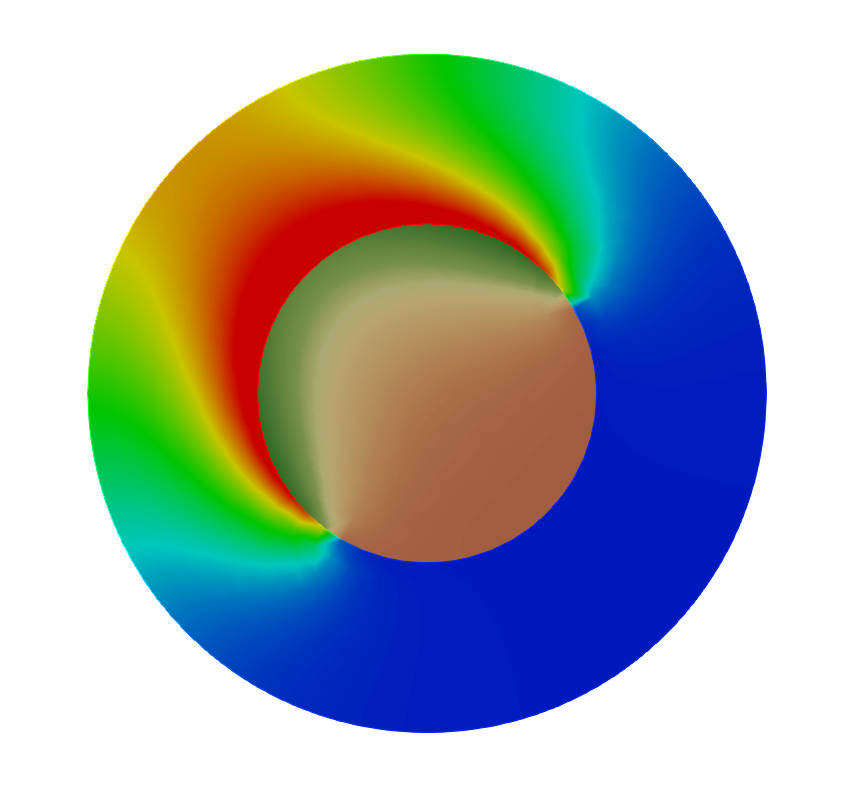}}
	\subfigure[$t=1.44$]{\label{fig:f}
		\includegraphics[width=0.225\textwidth]{./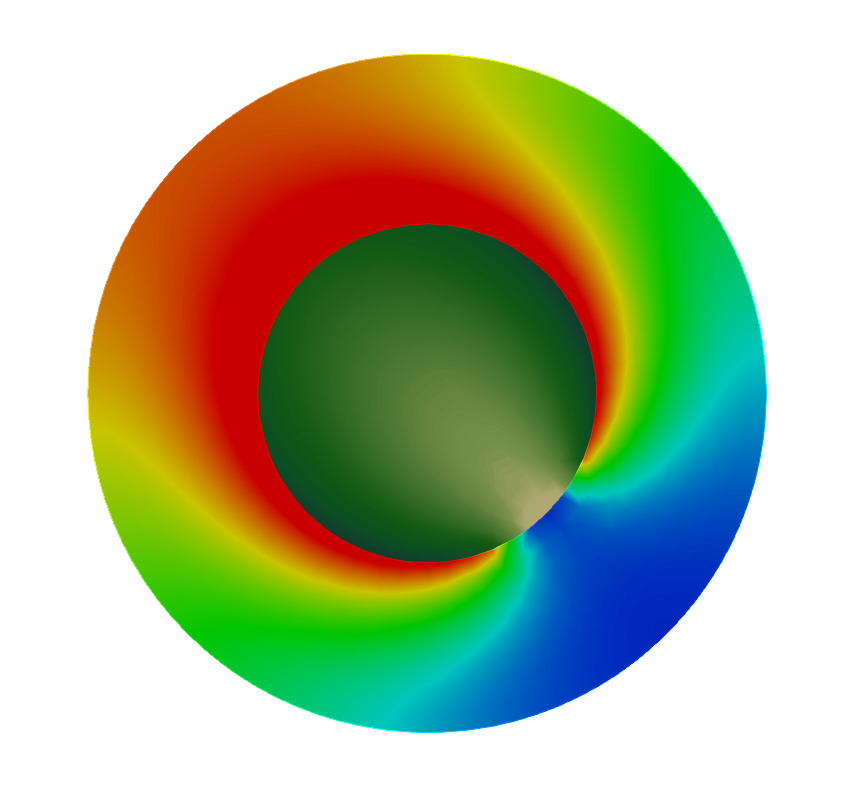}}
	\subfigure[$t=1.92$]{\label{fig:g}
		\includegraphics[width=0.225\textwidth]{./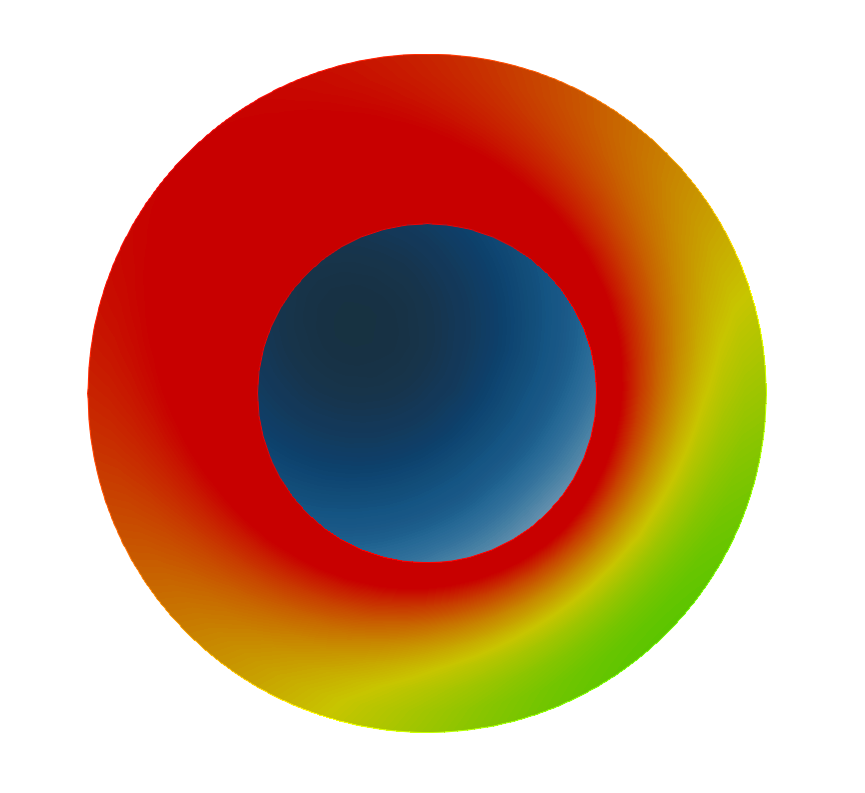}}
	\subfigure[$t=3.12$]{\label{fig:h}
		\includegraphics[width=0.225\textwidth]{./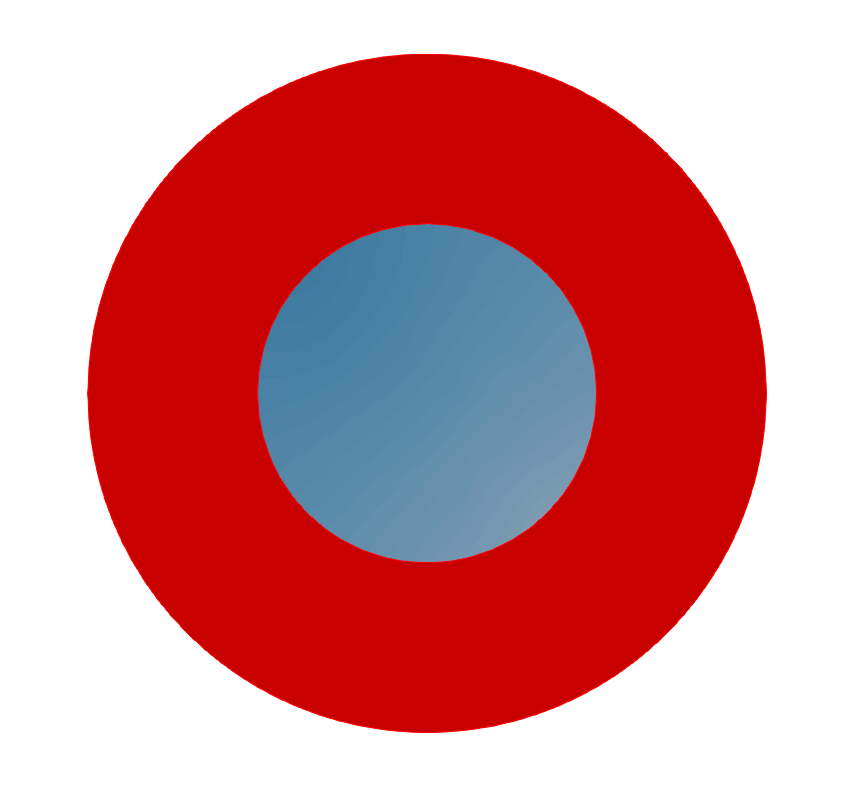}}
	\quad
	
	\subfigure[$t=4.32$]{\label{fig:i}
		\includegraphics[width=0.225\textwidth]{./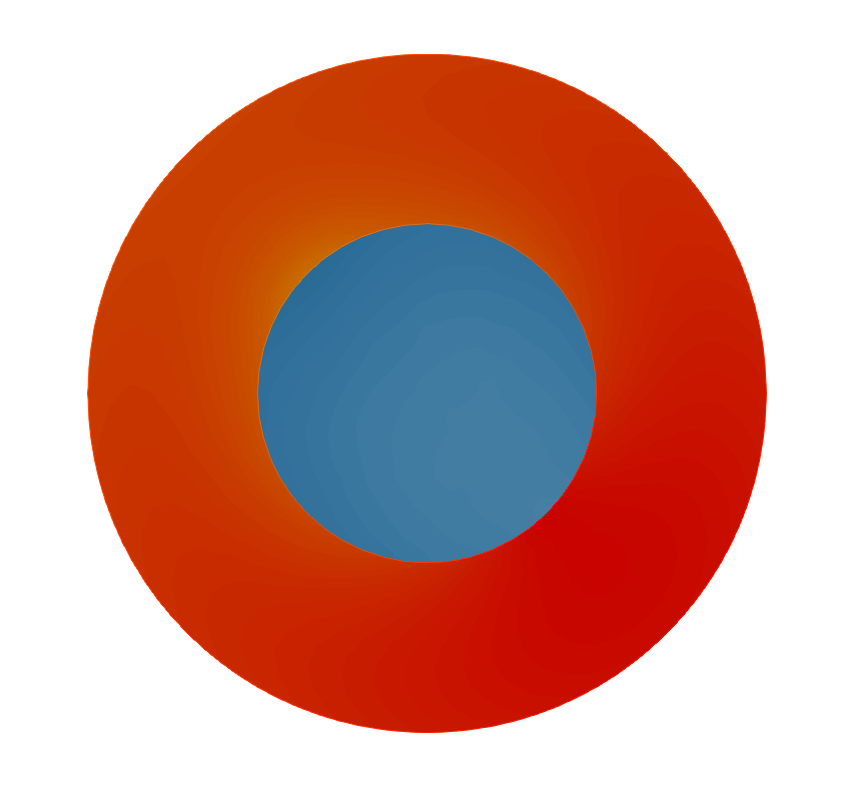}}
	\subfigure[$t=5.52$]{\label{fig:j}
		\includegraphics[width=0.225\textwidth]{./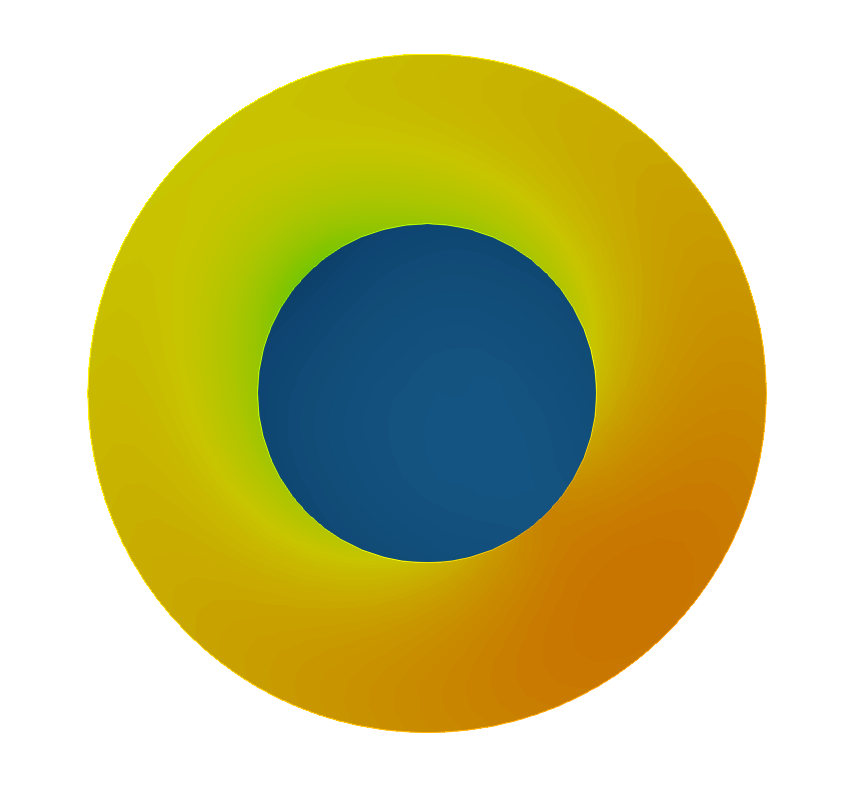}}
	\subfigure[$t=7.32$]{\label{fig:k}
		\includegraphics[width=0.225\textwidth]{./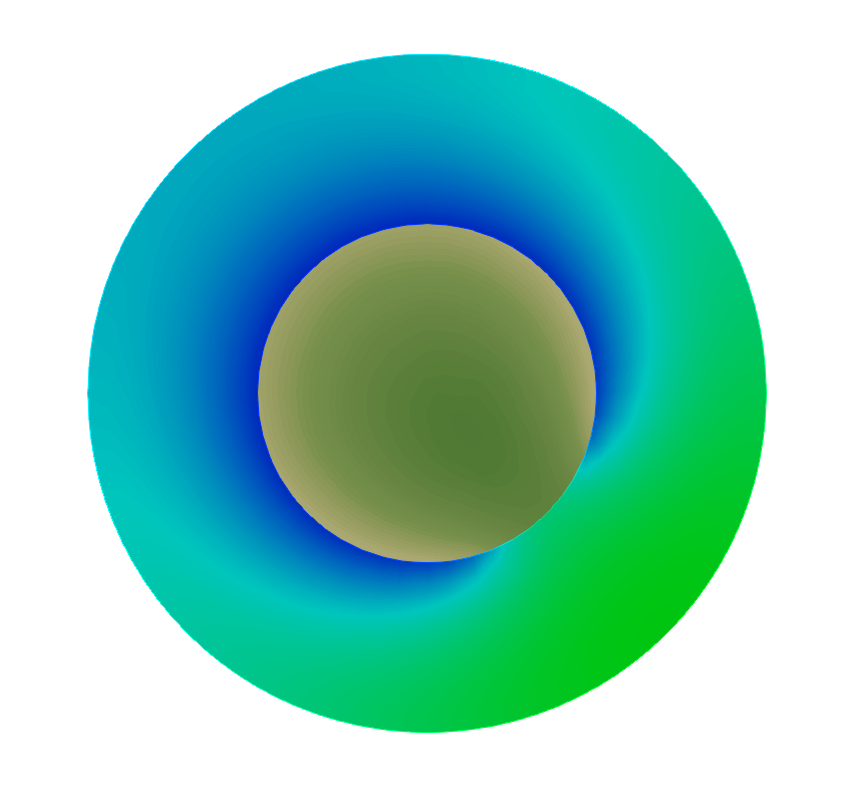}}
	\subfigure[$t=9.12$]{\label{fig:l}
		\includegraphics[width=0.225\textwidth]{./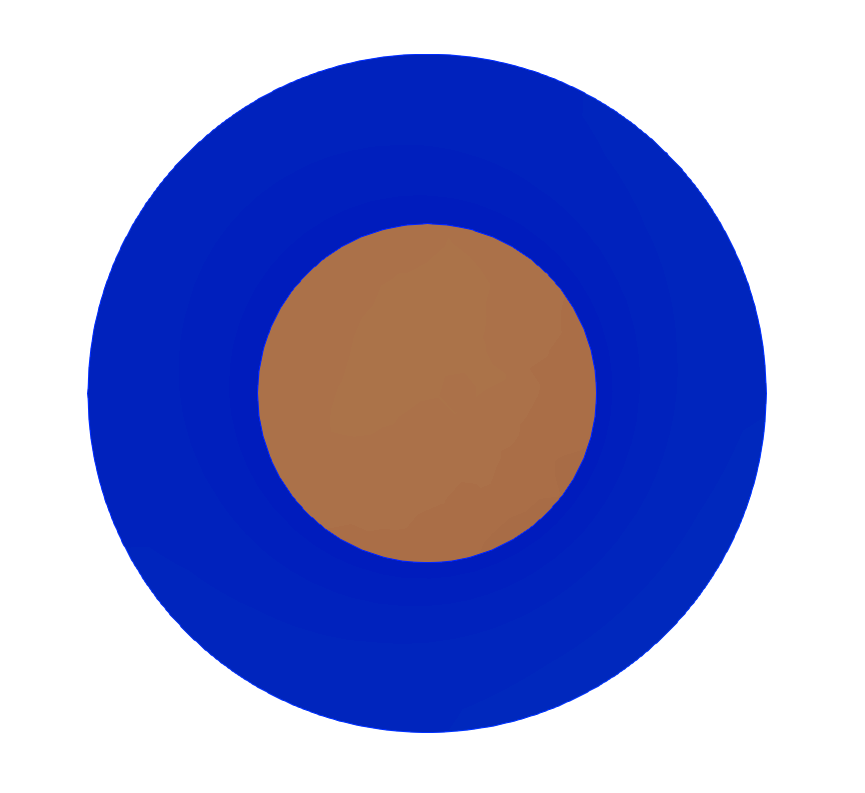}}
	\quad
	
	\subfigure[color bar for $u$ in $\Omega_c$]{\label{fig:bar1}
		\includegraphics[width=0.44\textwidth]{./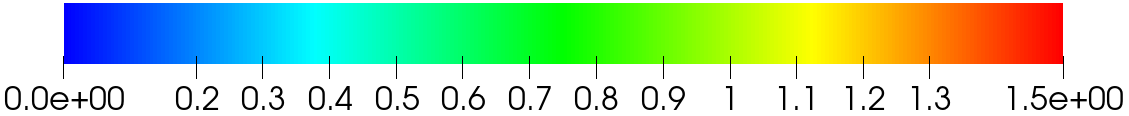}}
	\subfigure[color bar for $u_e$ in $\Omega_e$]{\label{fig:bar2}
		\includegraphics[width=0.44\textwidth]{./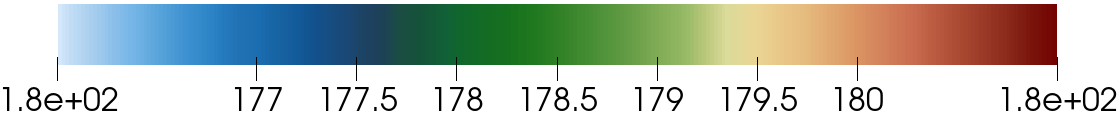}}
	\caption{Initiation (a)-(b), propagation (c)-(h) and recovery (i)-(l) of  the  calcium wave in a 2D cell. As in (l) - an equilibrium state, the black region is cytosol ($\Omega_c$), the brown region is ER ($\Omega_e$). $u$ and $u_e$ are the calcium concentrations in cytosol and ER respectively.}
	\label{fig:ca2+wave}
\end{figure}

\begin{figure}[ht!]
	\centering
	\subfigure[$t=0.72$]{\label{fig:ap}
		\includegraphics[width=0.225\textwidth]{./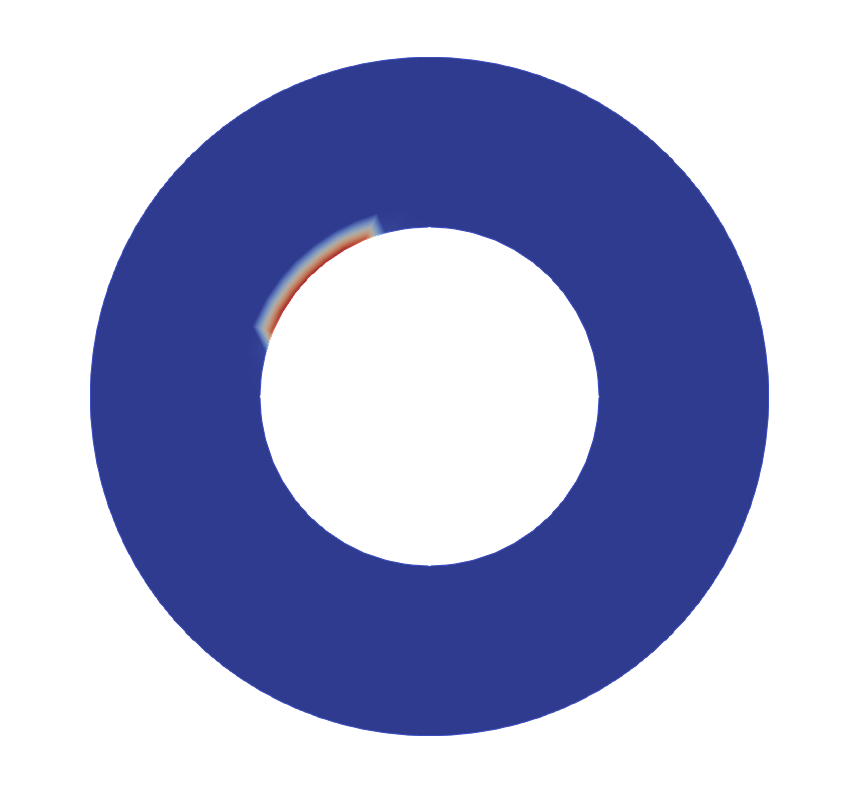}}
	\subfigure[$t=0.84$]{\label{fig:bp}
		\includegraphics[width=0.225\textwidth]{./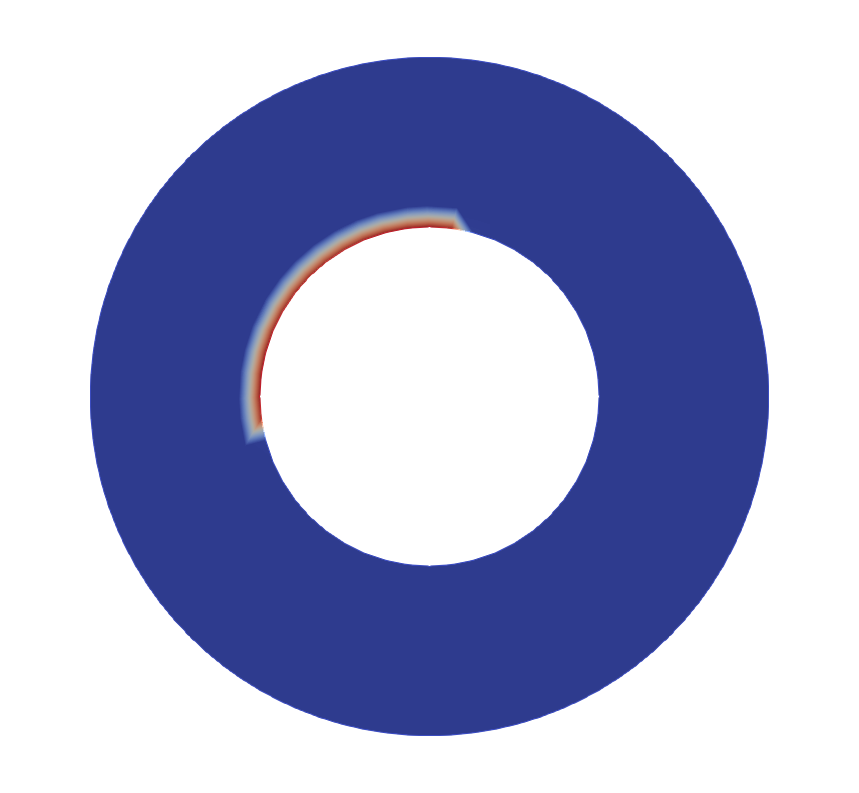}}
	\subfigure[$t=0.96$]{\label{fig:cp}
		\includegraphics[width=0.225\textwidth]{./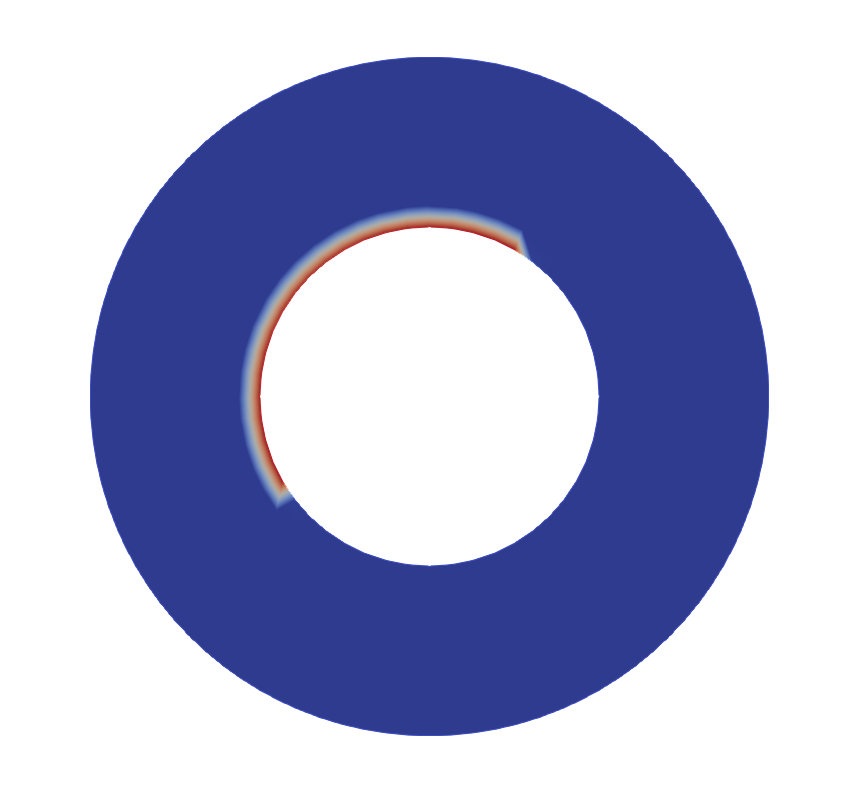}}
	\subfigure[$t=1.08$]{\label{fig:dp}
		\includegraphics[width=0.225\textwidth]{./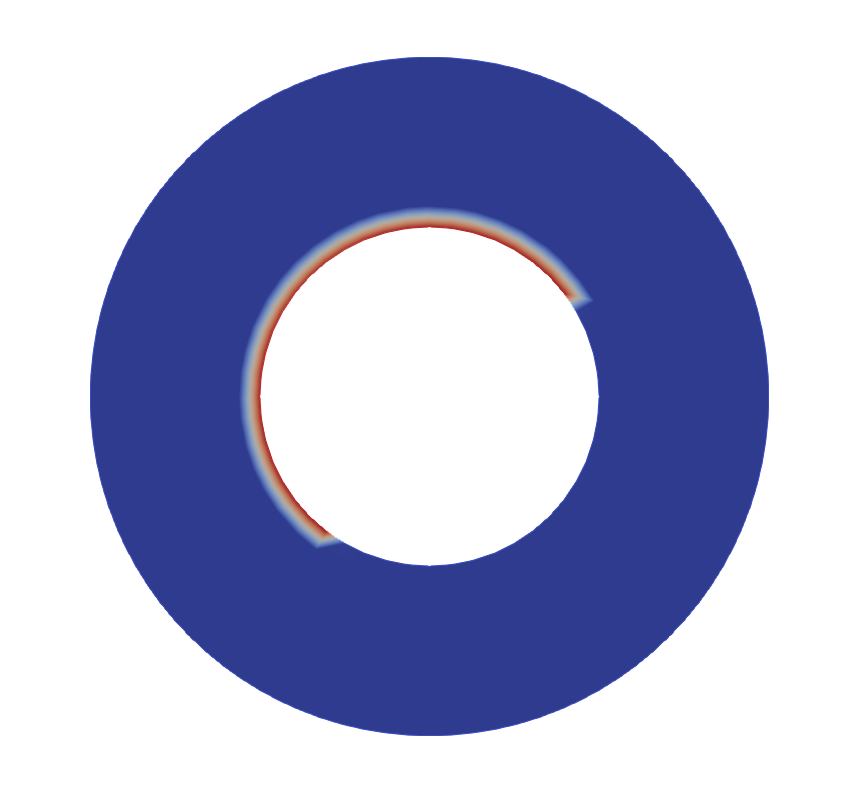}}
	\quad
	
	\subfigure[$t=1.20$]{\label{fig:ep}
		\includegraphics[width=0.225\textwidth]{./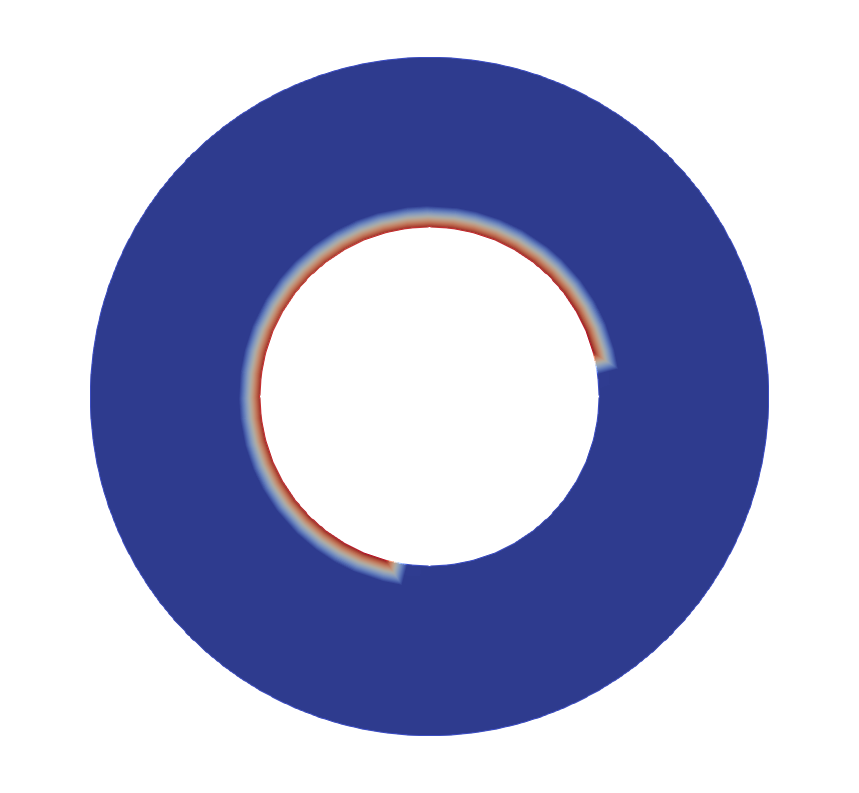}}
	\subfigure[$t=1.32$]{\label{fig:fp}
		\includegraphics[width=0.225\textwidth]{./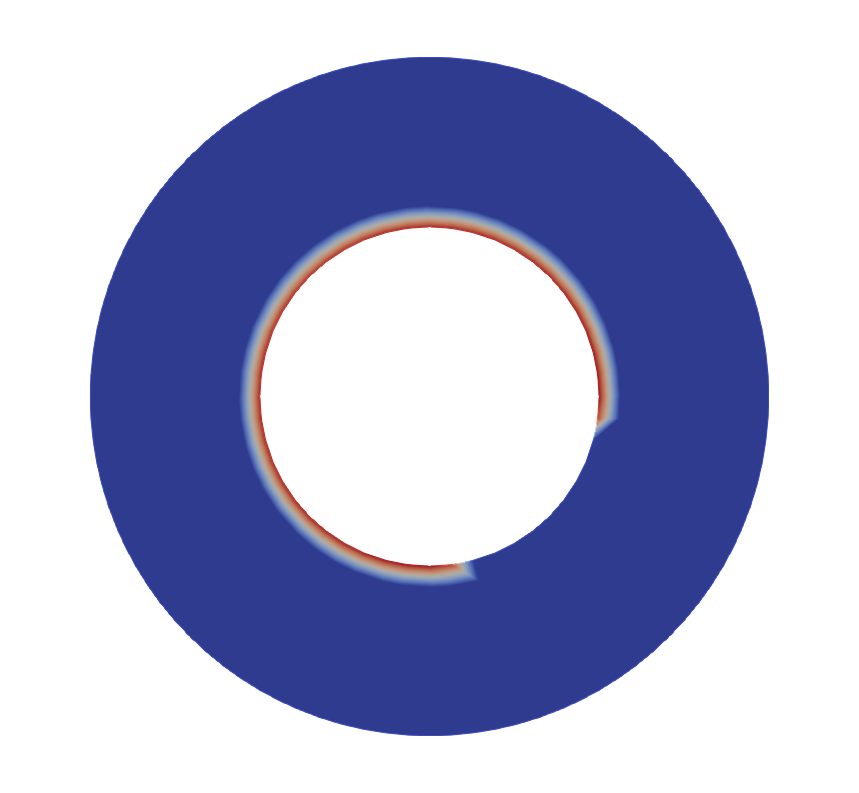}}
	\subfigure[$t=1.44$]{\label{fig:gp}
		\includegraphics[width=0.225\textwidth]{./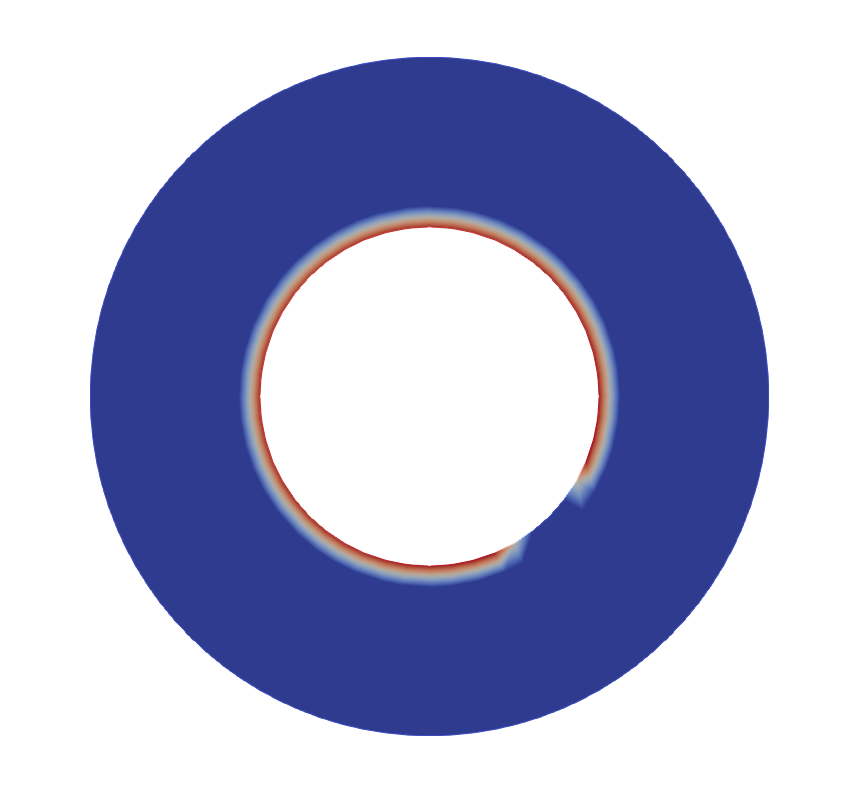}}
	\subfigure[$t=1.56$]{\label{fig:hp}
		\includegraphics[width=0.225\textwidth]{./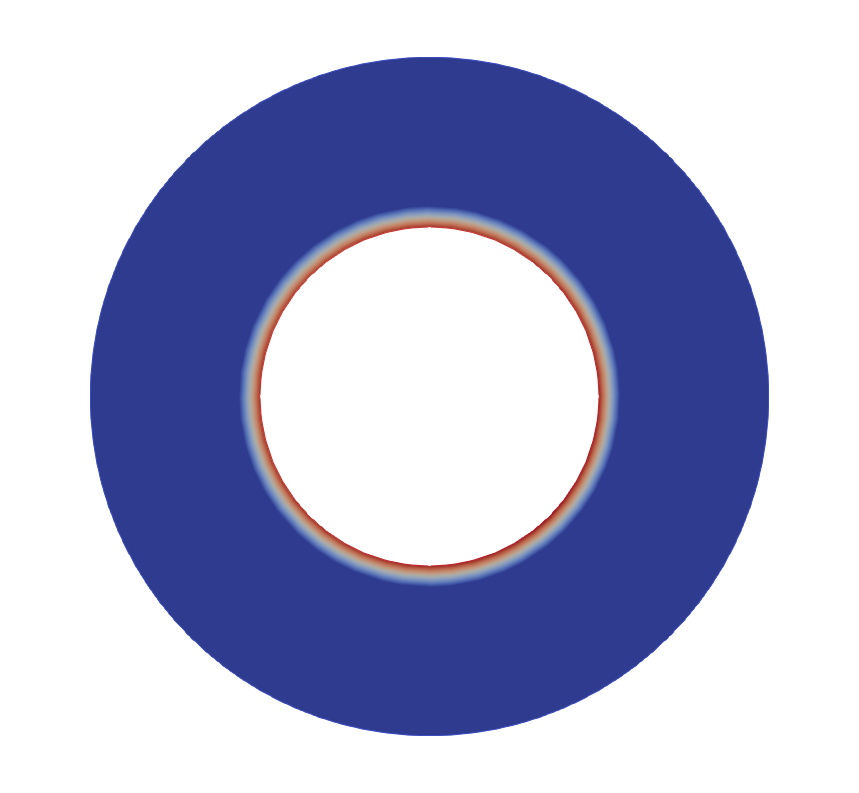}}
	\quad
	
	\subfigure[$t=4.80$]{\label{fig:ip}
		\includegraphics[width=0.225\textwidth]{./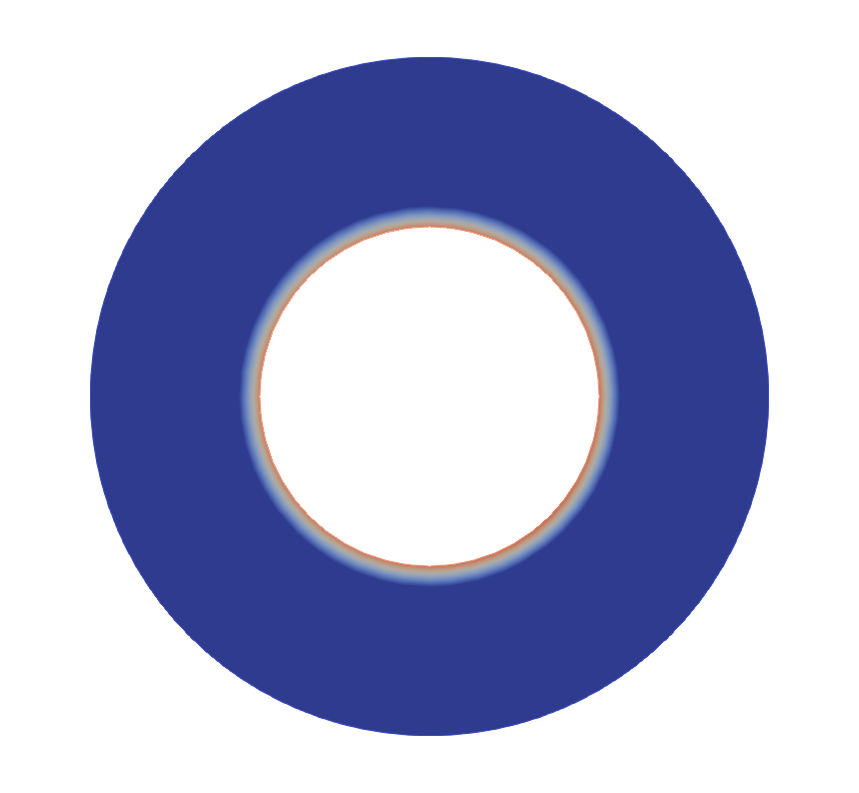}}
	\subfigure[$t=5.76$]{\label{fig:jp}
		\includegraphics[width=0.225\textwidth]{./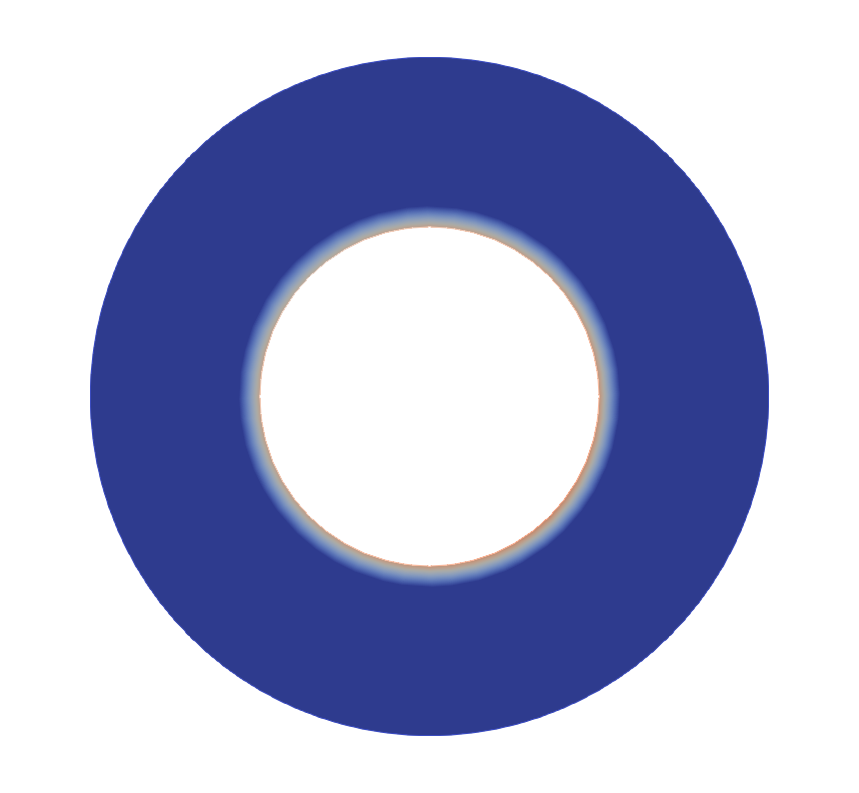}}
	\subfigure[$t=6.72$]{\label{fig:kp}
		\includegraphics[width=0.225\textwidth]{./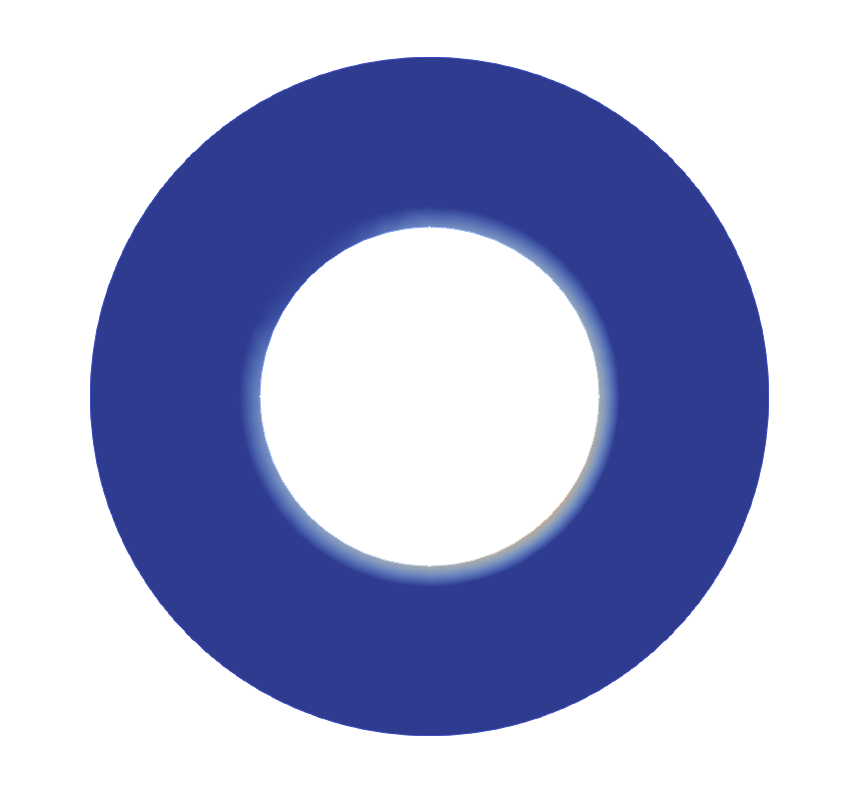}}
	\subfigure[$t=7.68$]{\label{fig:lp}
		\includegraphics[width=0.225\textwidth]{./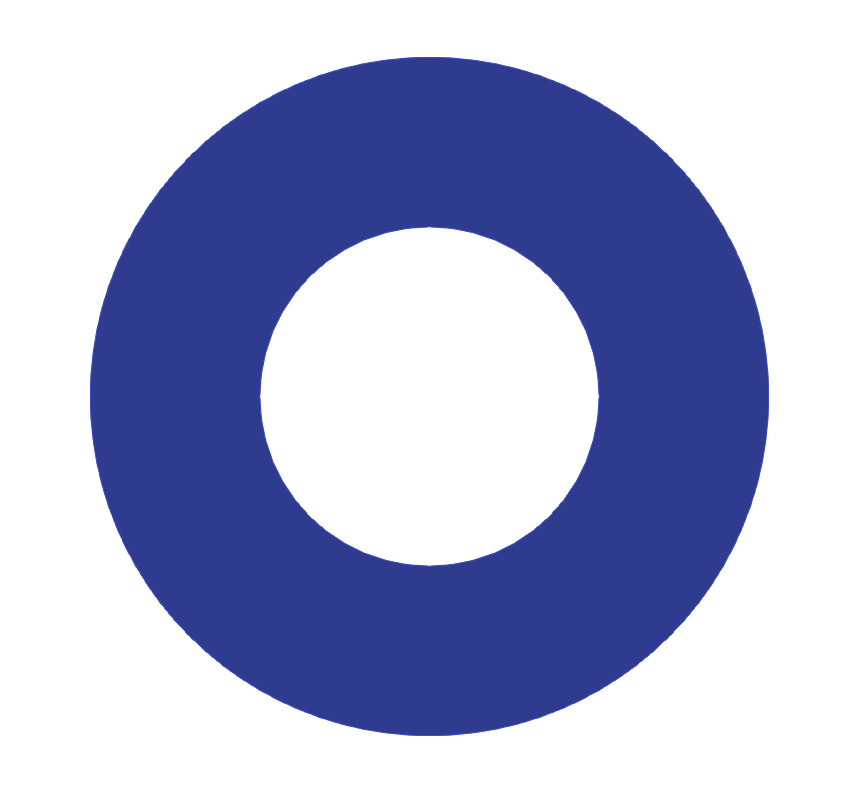}}
	\quad
	
	\subfigure[Color bar for the value of open probability on the inner 
	            circle -$\Upsilon$]{\label{fig:bar1p}
		\includegraphics[width=0.9\textwidth]{./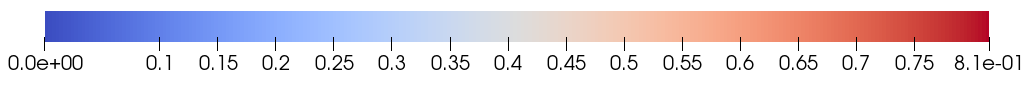}}
	\caption{The black region as in (l) is $\bar{\Omega}_c$, the value is always 0 on $\bar{\Omega}_c/\Upsilon$. Red color part on the inner circle (ER membrane) means the RyR channels are open. We can see that from (a)-(h), the open state propagates to the whole ER membrane which leads to the release of calcium from ER. From (i) to (l), the value of open probability decreases to the equilibrium state.}
	\label{fig:po3}
\end{figure}

{\color{black} {\bf Example }4. In this example we present a full system with different coefficients, taken from \cite{Breit2018}, which produces $Ca^{2+}$ waves. Units were adjusted so that $t$ has unit $s$, $u, u_e$ have unit $\upmu$M:
\begin{equation}
	\left\{
	\begin{aligned}
		&{\partial_t u} -220\Delta u = f(b,u) 
		\quad \text{on } \Omega_c\times(0,T]\\
		&{\partial_t b} -20\Delta b = f(b,u) 
		\quad \text{on } \Omega_c\times(0,T]\\
		& {\partial_t u_e} -220\Delta u_e = 0  \quad \text{on } \Omega_e\times(0,T]
	\end{aligned}
	\right. 
\end{equation}
where $T=80$, $f(b,u) = K_b^-(b^0-b)-K_b^+ bu$, $\partial_n b =0$ on $\partial\Omega_c$, and other boundary conditions are: 
\begin{align}
	\partial_n u 
	&= C_3(1000-u)-\frac{C_2u}{1.8+u}-\frac{C_1u^2}{0.06^2+u^2}
	+g(x,y,t) \ {\text{ on } \partial\Omega}\times(0,T] \label{ueBdyonGamma4414}  \\
	\partial_n u  
	&= 
	C_{1}^e P(t,u)(u_e-u)-C_2^e\frac{u}{(0.18+u)u_e}+C_3^e(u_e-u) 
	\ \text{ on } \Upsilon\times(0,T]\\
	\partial_n u_e
	&= 
	C_{1}^e P(t,u)(u-u_e)+C_2^e\frac{u}{(0.18+u)u_e}-C_3^e(u_e-u) 
	\ \text{ on } \label{ueBdyonGamma4434} \Upsilon\times(0,T]
\end{align}
The initial conditions are $u(x,y,0) = 0.05$, $b(x,y,0)=37$, $u_e(x,y,0)=250$, in $f(b,u)$, $b^0=40,$ $K_b^-=16.65,$ $K_b^+=27.$ The ODE system is the same as in Example 1, and the initial conditions of the ODE are  $c_1(0) = 0.994$, $o(0) =1.5721\times 10^{-7},$ $c_2(0) =5.6625\times 10^{-3}.$ 
In \eqref{ueBdyonGamma4414}, the value $1,000$ is the extracellular Ca$^{2+}$ concentration, and $g$ is a calcium influx function: 
$g(x,y,t) = 240 e^{-0.01/(0.01-(t-0.2)^2)+1}$ if $0.1<t<0.3$ and $y-x\geq 2.5$; $g(x,y,t) = 0$ elsewhere. The coefficients in \eqref{ueBdyonGamma4414} to \eqref{ueBdyonGamma4434} are: $C_{1}^e =0.829468,$ $C_2^e = 11000,$ $C_3^e =0.038,$ $C_1=8.5,$ $C_2=37.6,$ $C_3 =0.0045.$ 

Example 4 is constructed to show the initiation and propagation of the calcium wave in a full model, see Figure \ref{fig:ca2+wave4}. For computation, we use a similar geometry as in Examples 3. The radii of the two circles, with center (0,0), are 1.2 and 2, but with different meshes and $P_1$ elements, where $T=80$, $\Delta t=0.01/16,$ and the spatial mesh size is $h=\pi/32$. Due to larger diffusion coefficients and the buffer $b$, propagation of the calcium is much faster than Example 3, but the recovery is slower. Here, we don't show the graph of $b$ which varies from 2 to 38, since it's less important.  
Figure \ref{fig:ca2+wave4p} shows the open probability function $P(t,u)$ in equation \eqref{ptu} for RyR channels on the ER membrane. It ranges from 0 to 0.96.
Instability of the scheme \eqref{FDFEM} can be observed with time step size larger than $0.01/16$. 
}
\begin{figure}[ht!]
	\centering
	\subfigure[$t=0.04$]{\label{fig:a4}
		\includegraphics[width=0.225\textwidth]{./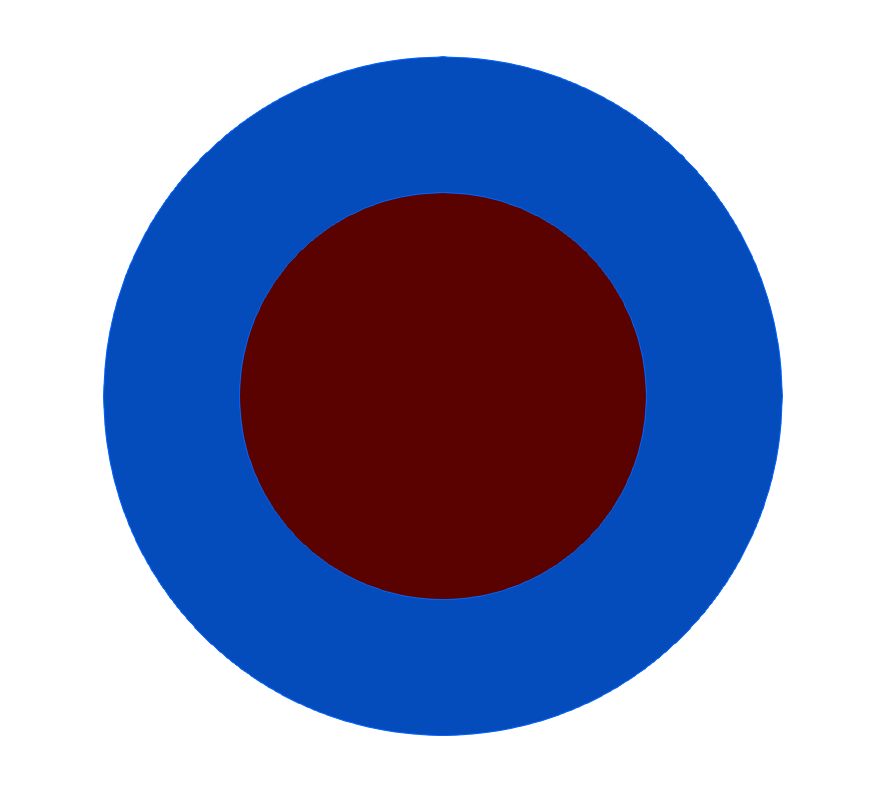}}
	\subfigure[$t=0.24$]{\label{fig:b4}
		\includegraphics[width=0.225\textwidth]{./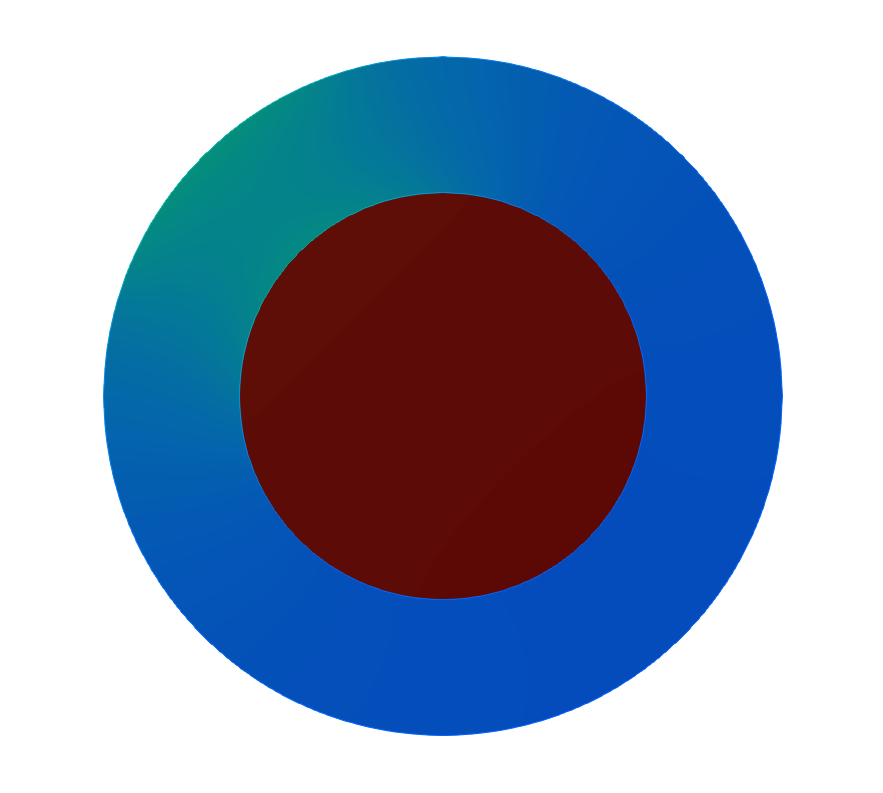}}
	\subfigure[$t=0.44$]{\label{fig:c4}
		\includegraphics[width=0.225\textwidth]{./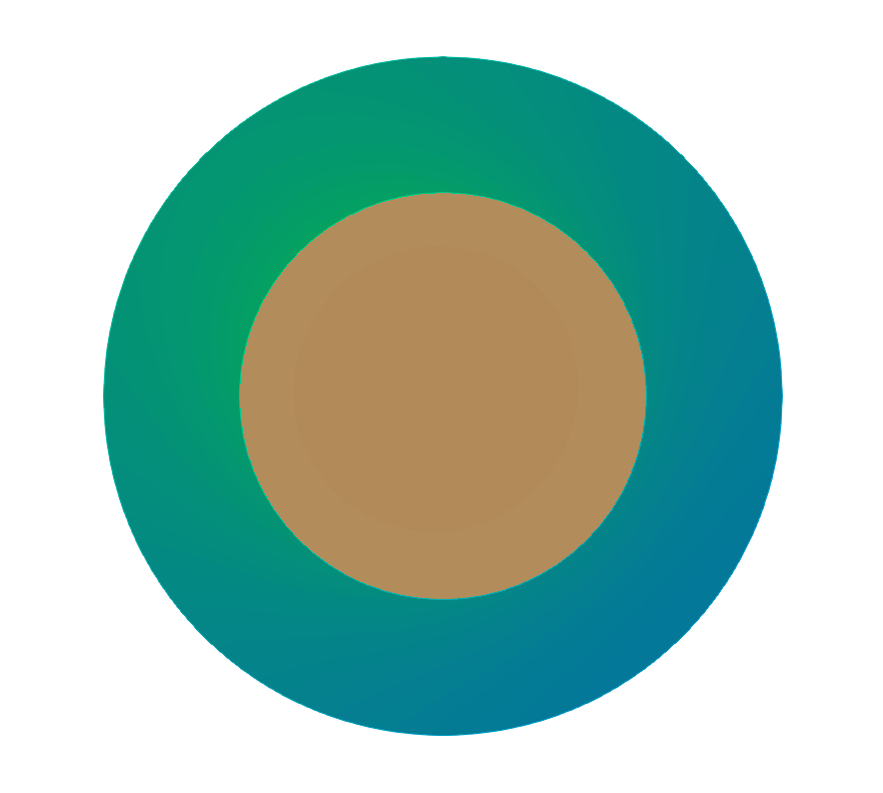}}
	\subfigure[$t=0.64$]{\label{fig:d4}
		\includegraphics[width=0.225\textwidth]{./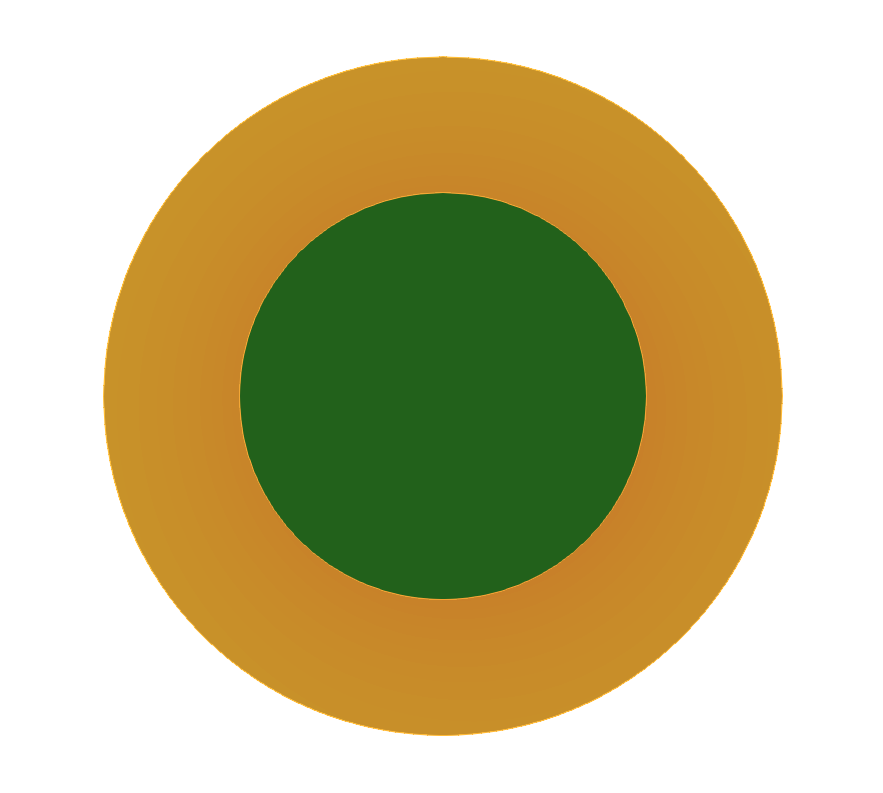}}
	\quad
	
	\subfigure[$t=0.84$]{\label{fig:e4}
		\includegraphics[width=0.225\textwidth]{./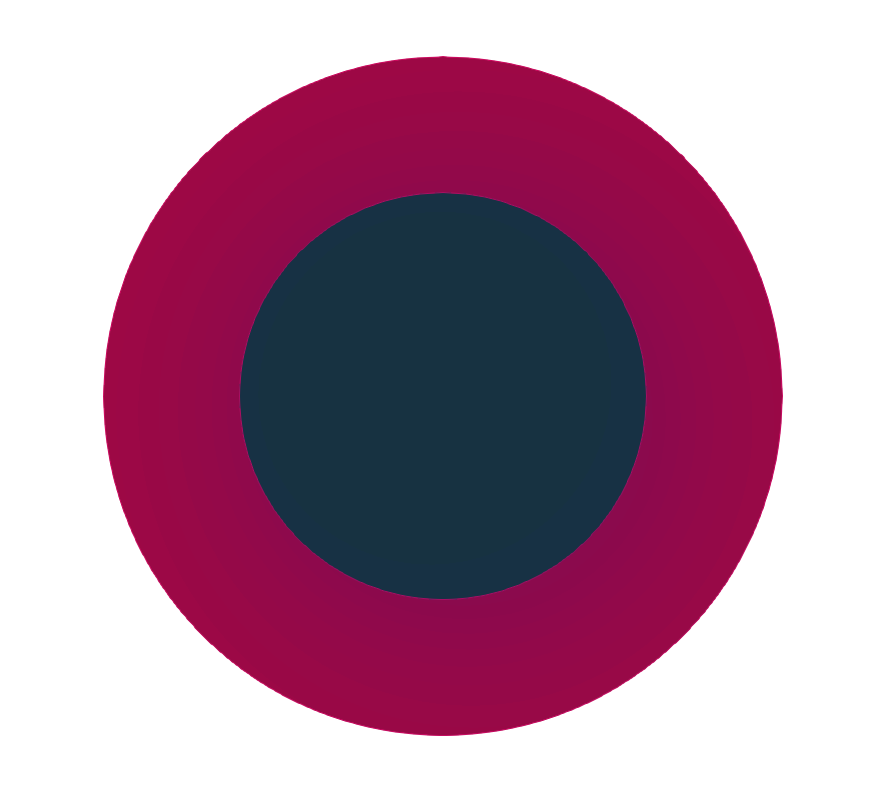}}
	\subfigure[$t=1.04$]{\label{fig:f4}
		\includegraphics[width=0.225\textwidth]{./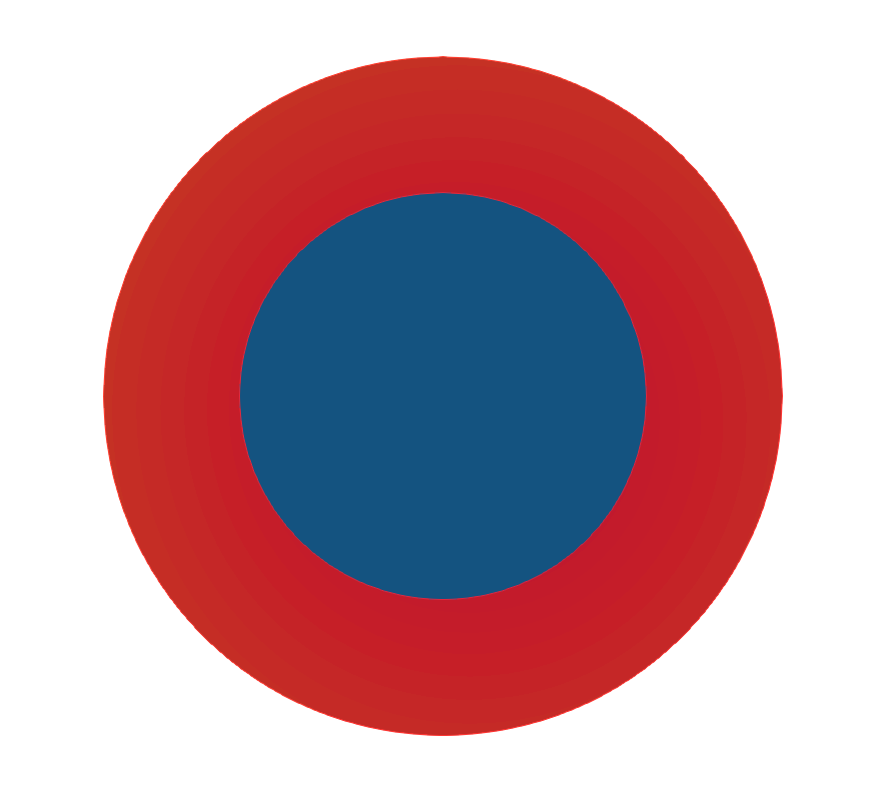}}
	\subfigure[$t=1.24$]{\label{fig:g4}
		\includegraphics[width=0.225\textwidth]{./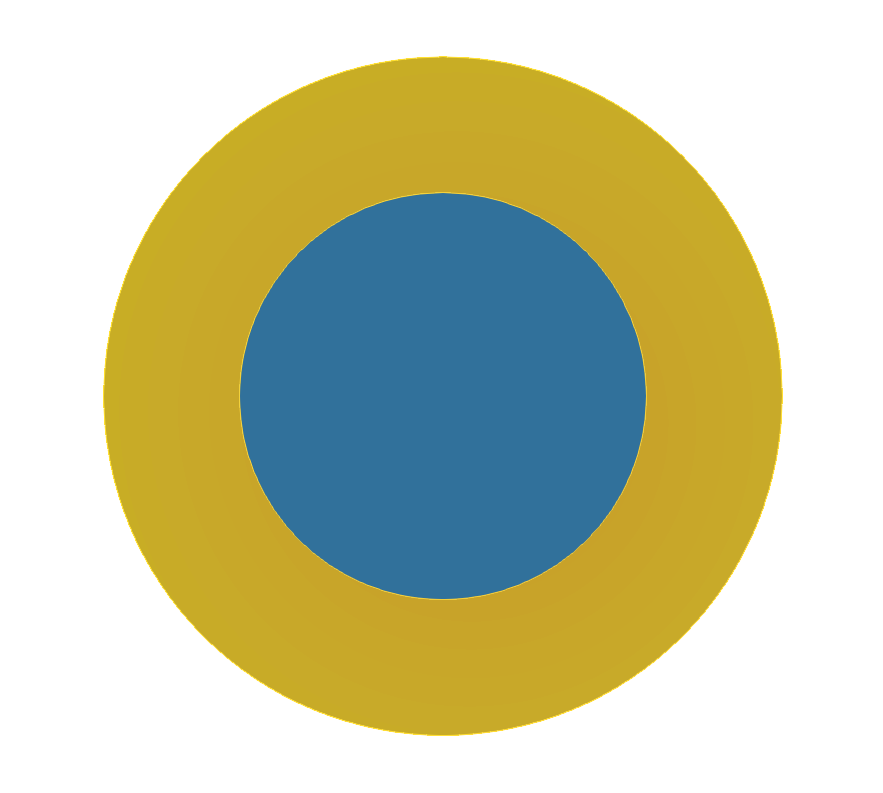}}
	\subfigure[$t=1.44$]{\label{fig:h4}
		\includegraphics[width=0.225\textwidth]{./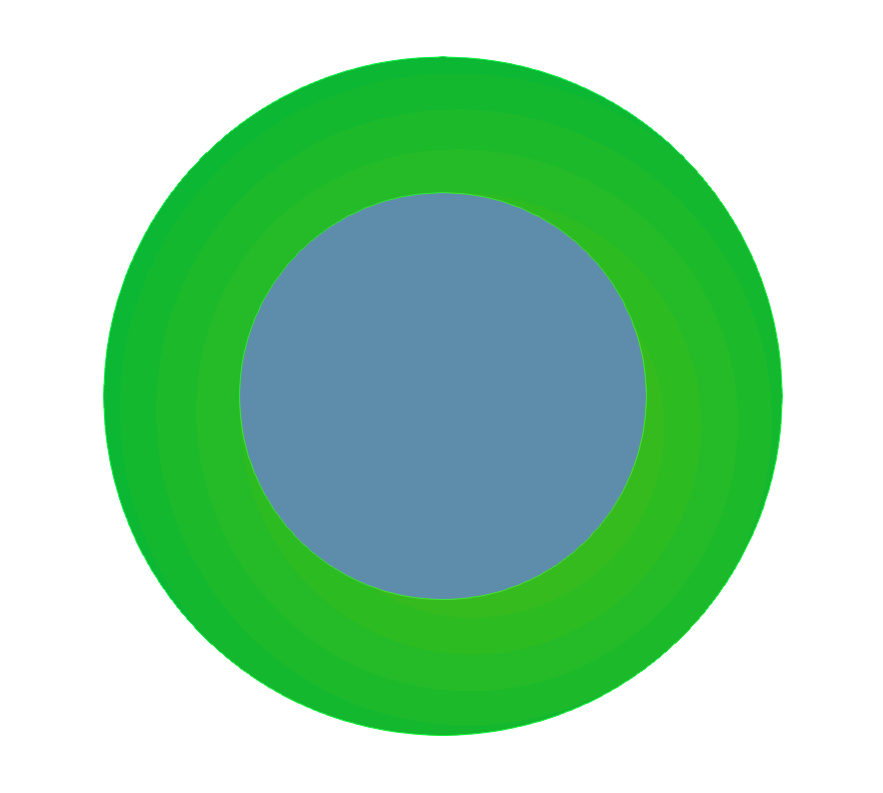}}
	\quad
	
	\subfigure[$t=1.64$]{\label{fig:i4}
		\includegraphics[width=0.225\textwidth]{./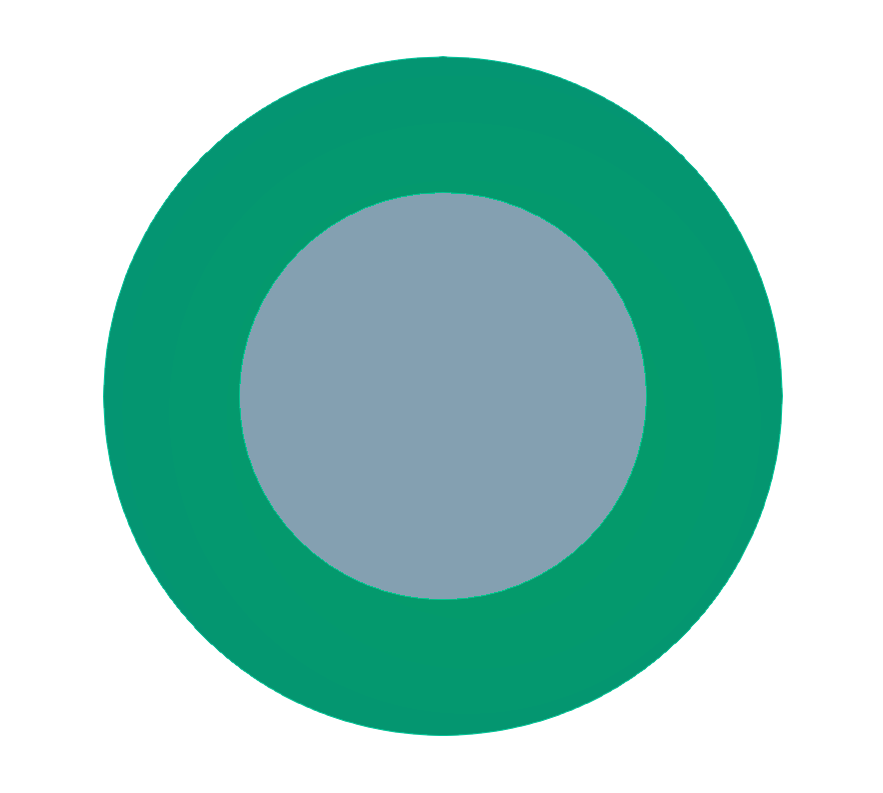}}
	\subfigure[$t=1.88$]{\label{fig:j4}
		\includegraphics[width=0.225\textwidth]{./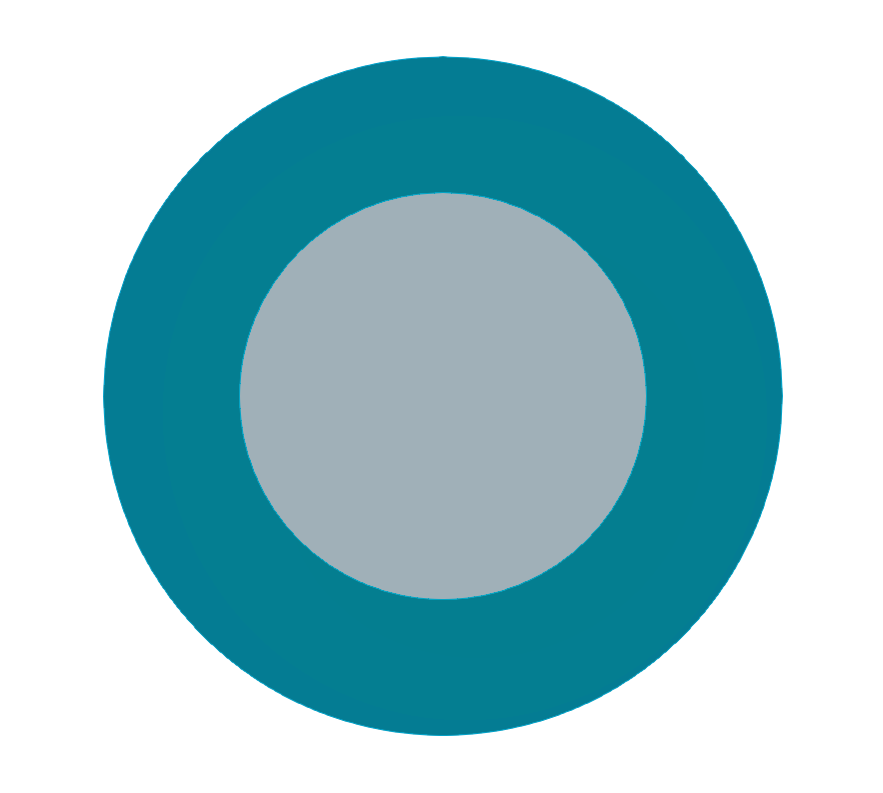}}
	\subfigure[$t=2.44$]{\label{fig:k4}
		\includegraphics[width=0.225\textwidth]{./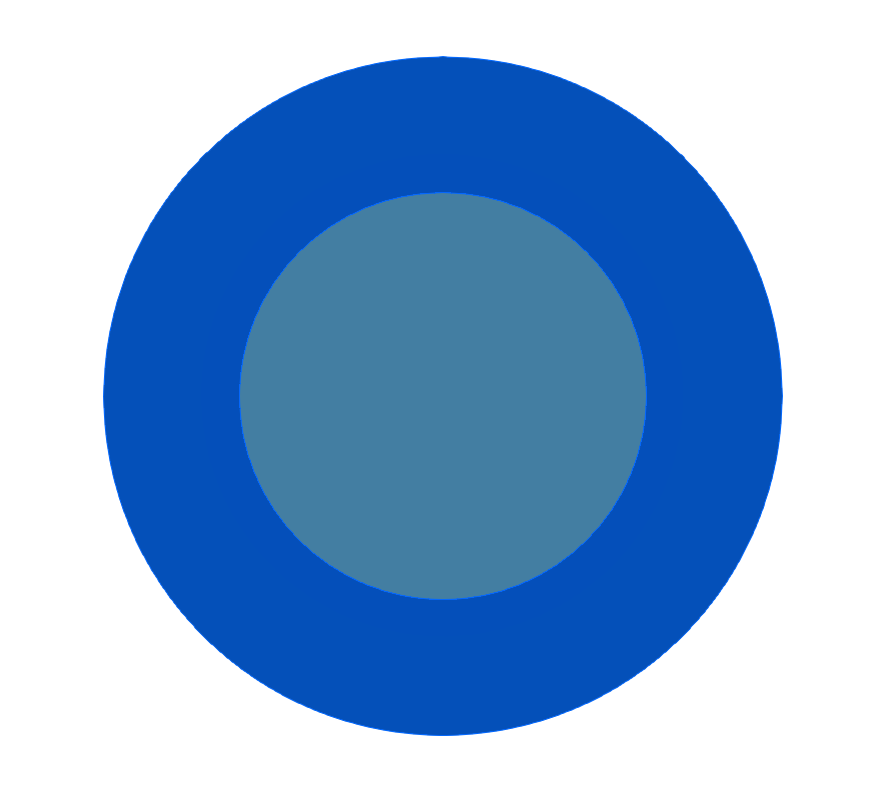}}
	\subfigure[$t=3.64$]{\label{fig:l4}
		\includegraphics[width=0.225\textwidth]{./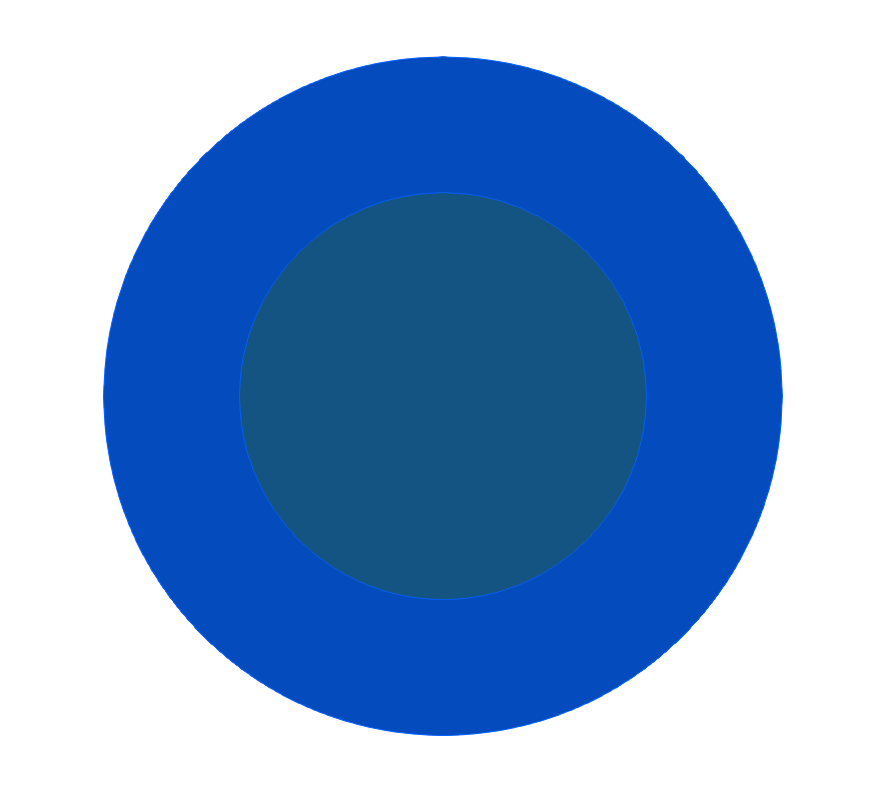}}
	\quad
	
	\subfigure[$t=4.8$]{\label{fig:m4}
		\includegraphics[width=0.225\textwidth]{./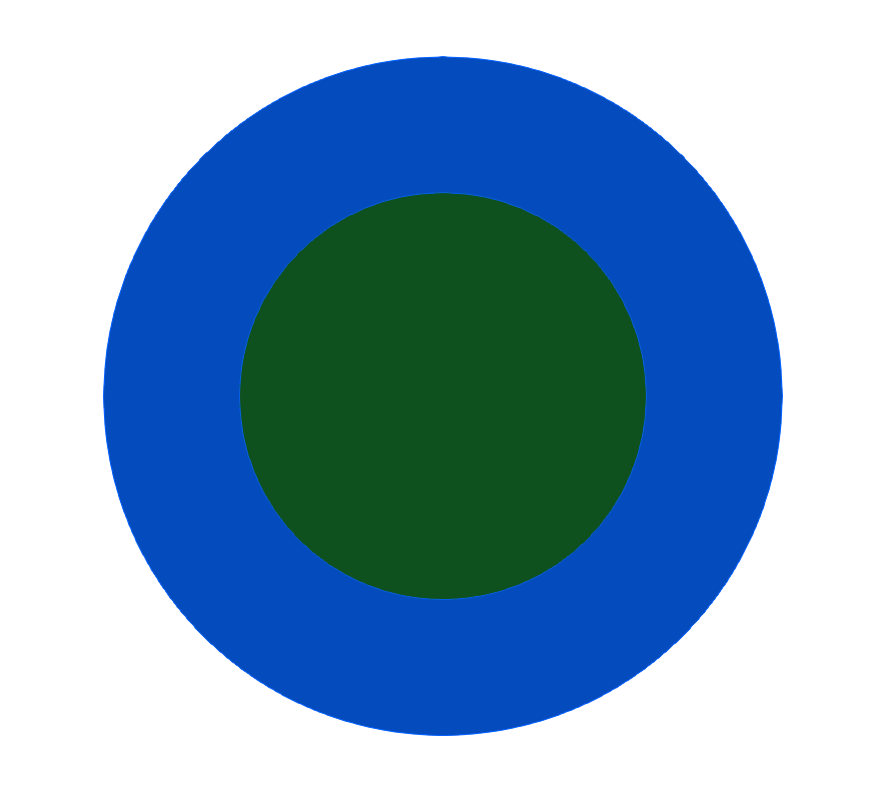}}
	\subfigure[$t=16.8$]{\label{fig:n4}
		\includegraphics[width=0.225\textwidth]{./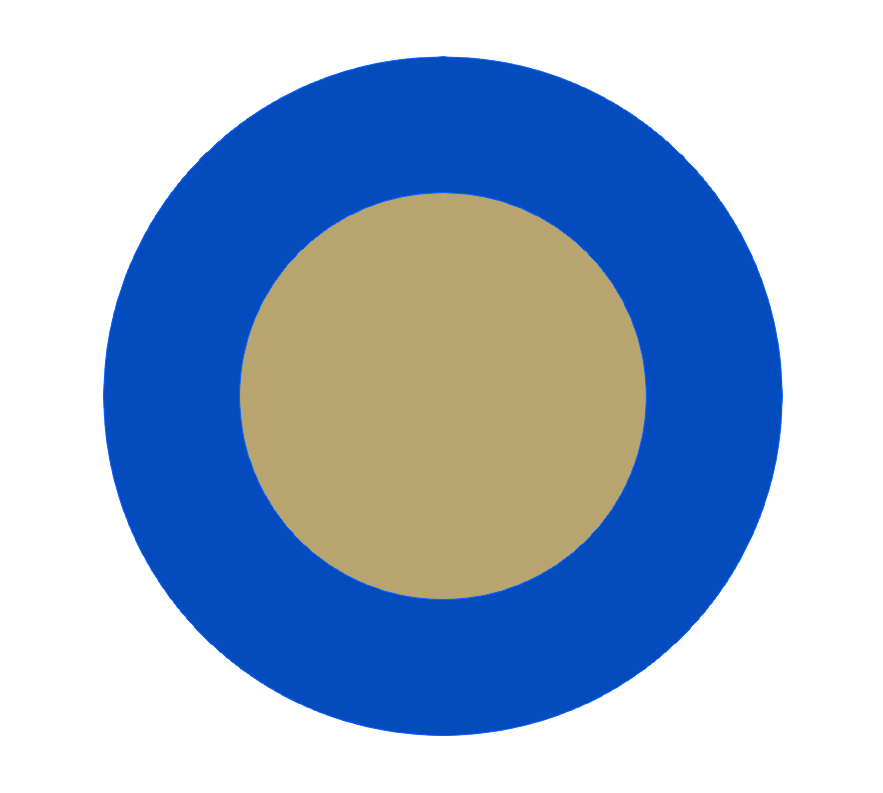}}
	\subfigure[$t=28.8$]{\label{fig:o4}
		\includegraphics[width=0.225\textwidth]{./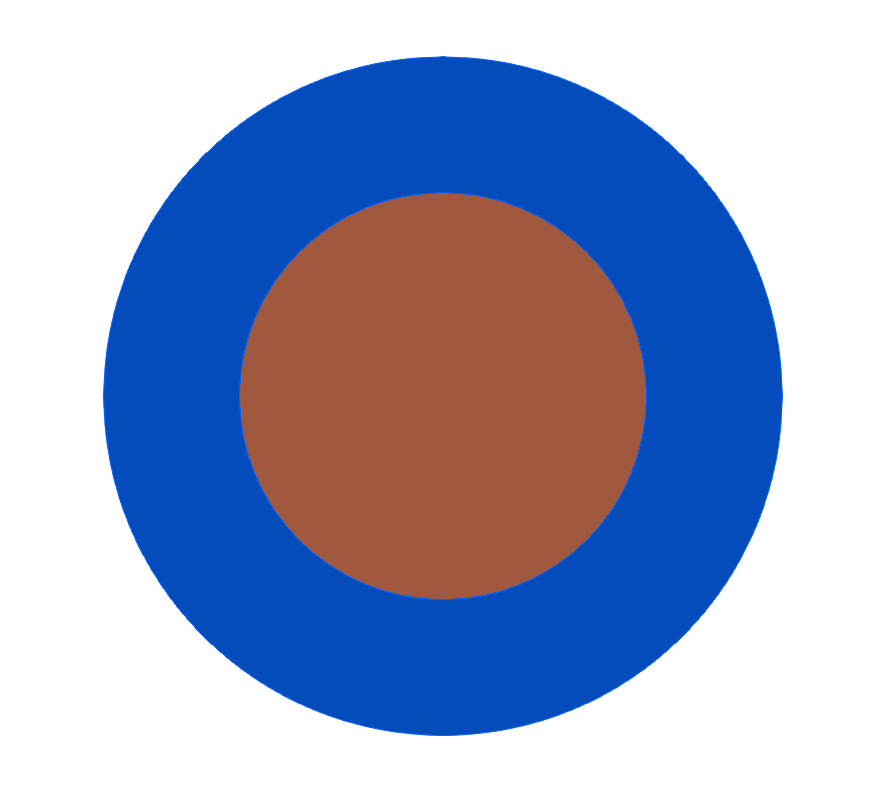}}
	\subfigure[$t=74.4$]{\label{fig:p4}
		\includegraphics[width=0.225\textwidth]{./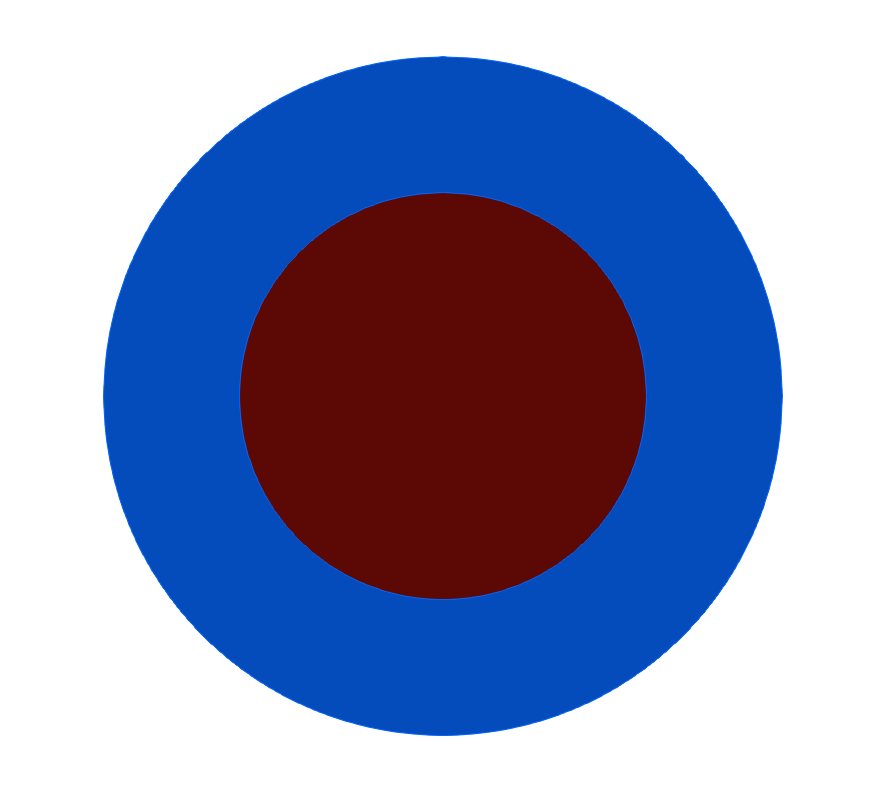}}
	\quad	
	\subfigure[color bar for $u$ in $\Omega_c$]{\label{fig:bar14}
		\includegraphics[width=0.44\textwidth]{./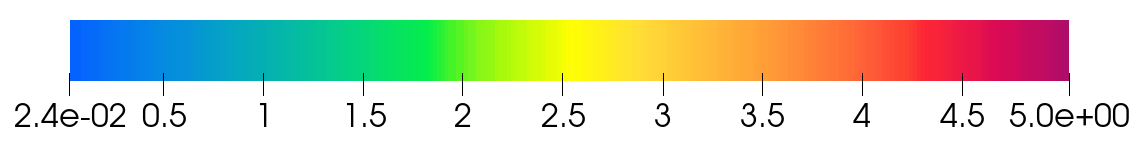}}
	\subfigure[color bar for $u_e$ in $\Omega_e$]{\label{fig:bar24}
		\includegraphics[width=0.44\textwidth]{./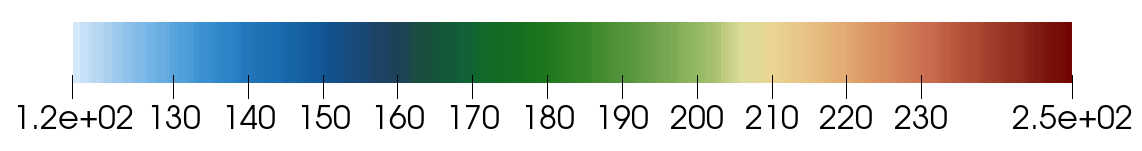}}
	\caption{Initiation (a)-(b), propagation (c)-(e) and recovery (f)-(p) of  the  calcium wave in a 2D cell. As in (a), (p) - the equilibrium state, the black region is cytosol ($\Omega_c$), the dark-red region is ER ($\Omega_e$). $u$ and $u_e$ are the calcium concentrations in cytosol and ER respectively.}
	\label{fig:ca2+wave4}
\end{figure}

\begin{figure}[ht!]
	\centering
	\subfigure[$t=0.16$]{\label{fig:a4p}
		\includegraphics[width=0.225\textwidth]{./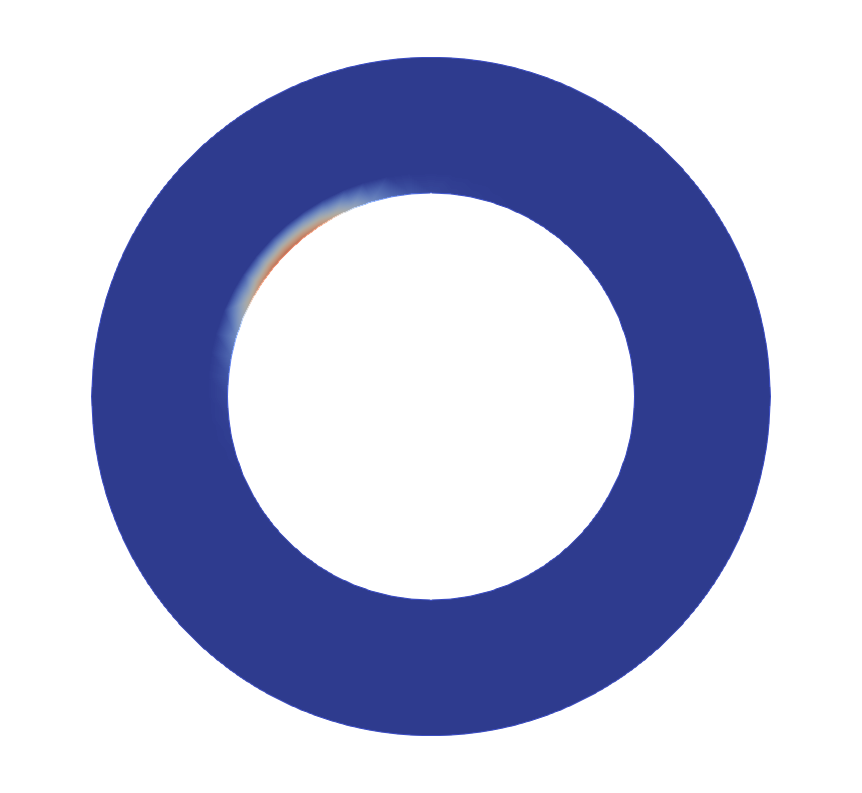}}
	\subfigure[$t=0.20$]{\label{fig:b4p}
		\includegraphics[width=0.225\textwidth]{./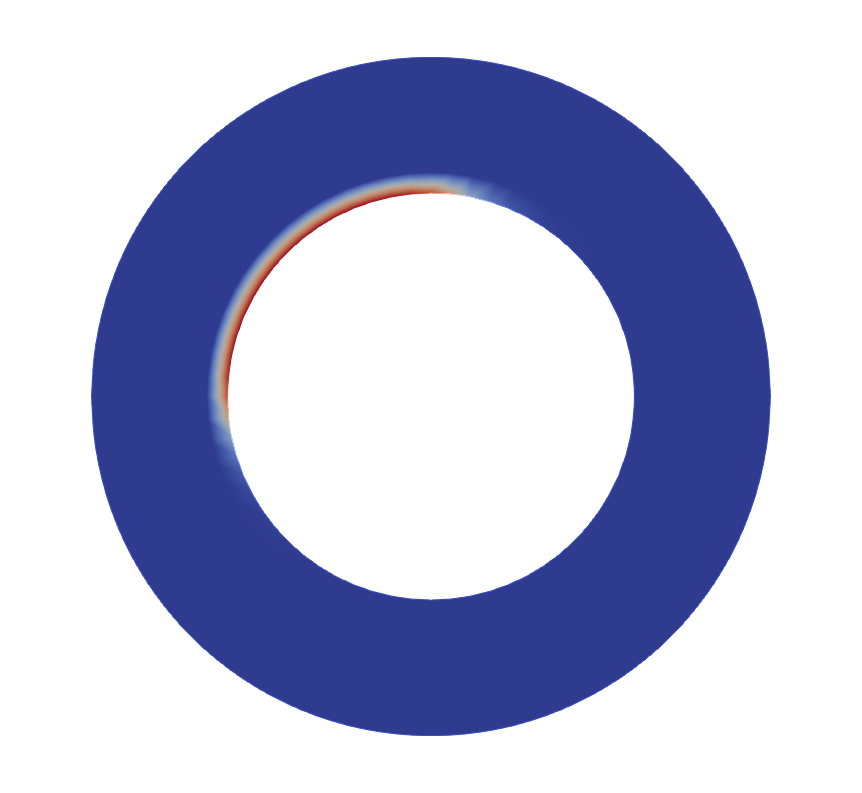}}
	\subfigure[$t=0.24$]{\label{fig:c4p}
		\includegraphics[width=0.225\textwidth]{./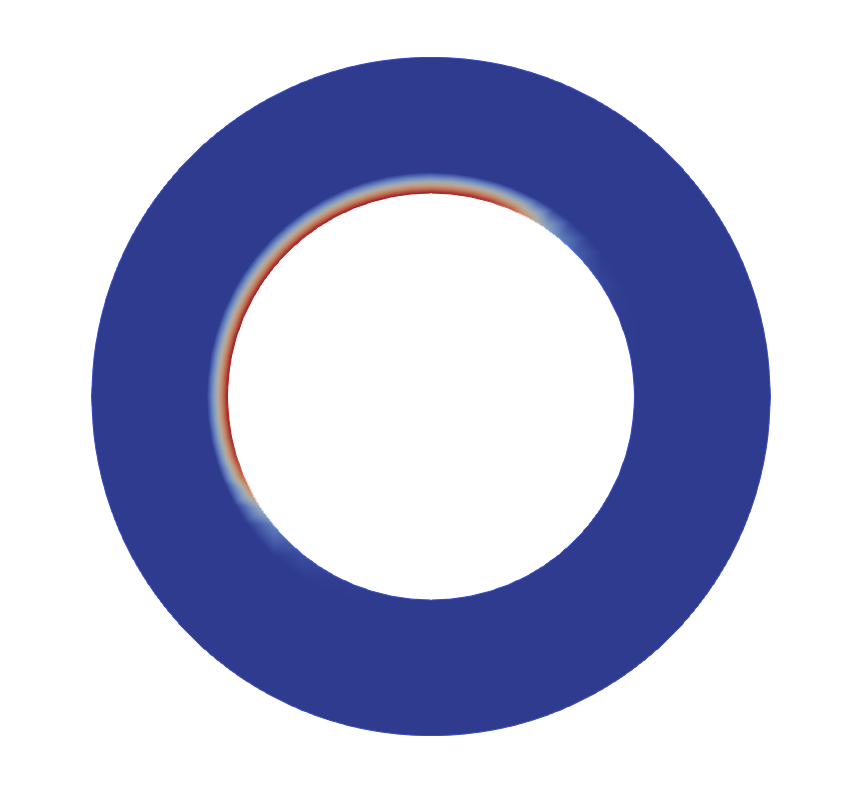}}
	\subfigure[$t=0.28$]{\label{fig:d4p}
		\includegraphics[width=0.225\textwidth]{./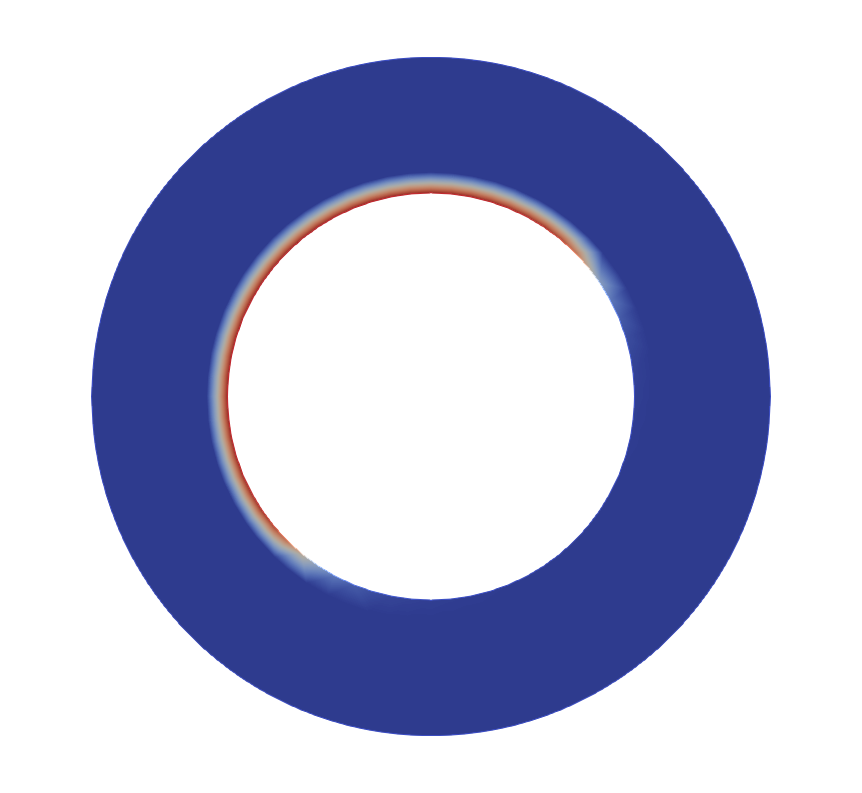}}
	\quad
	
	\subfigure[$t=0.32$]{\label{fig:e4p}
		\includegraphics[width=0.225\textwidth]{./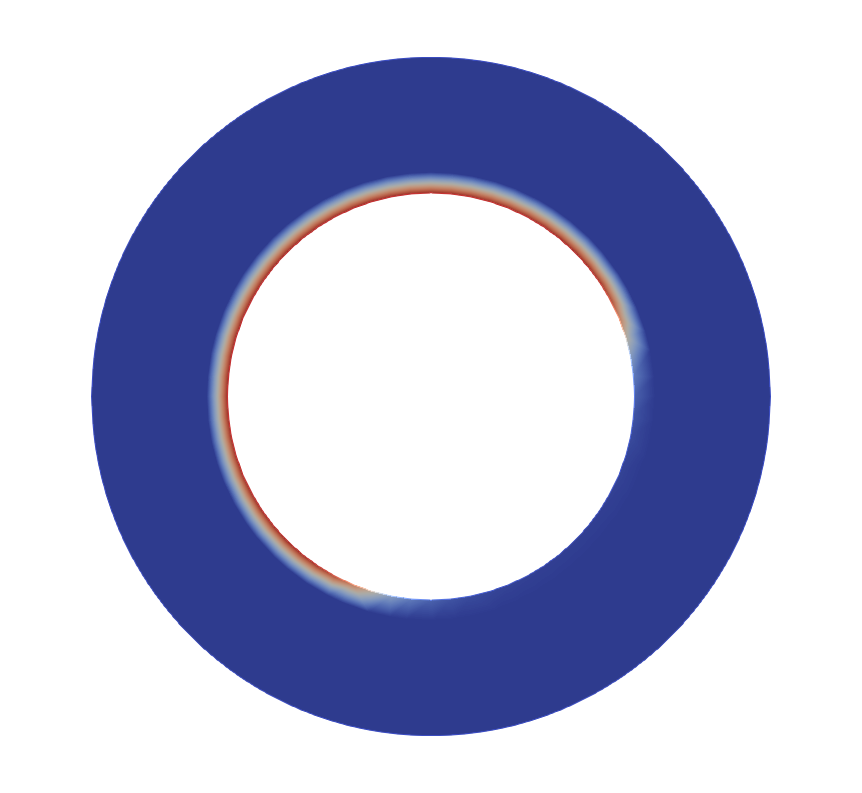}}
	\subfigure[$t=0.36$]{\label{fig:f4p}
		\includegraphics[width=0.225\textwidth]{./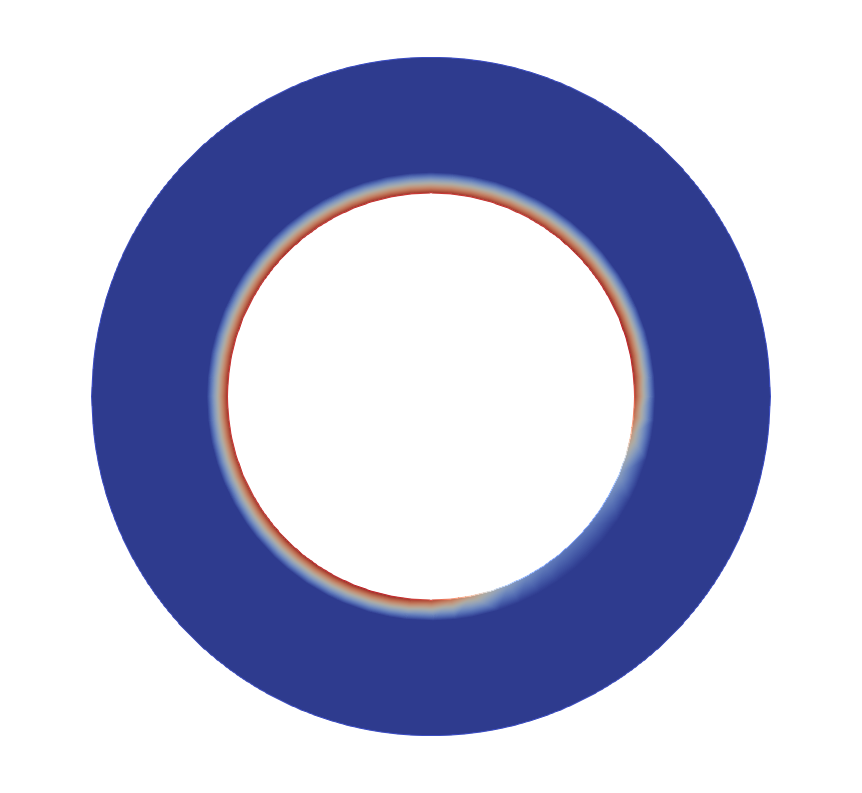}}
	\subfigure[$t=0.40$]{\label{fig:g4p}
		\includegraphics[width=0.225\textwidth]{./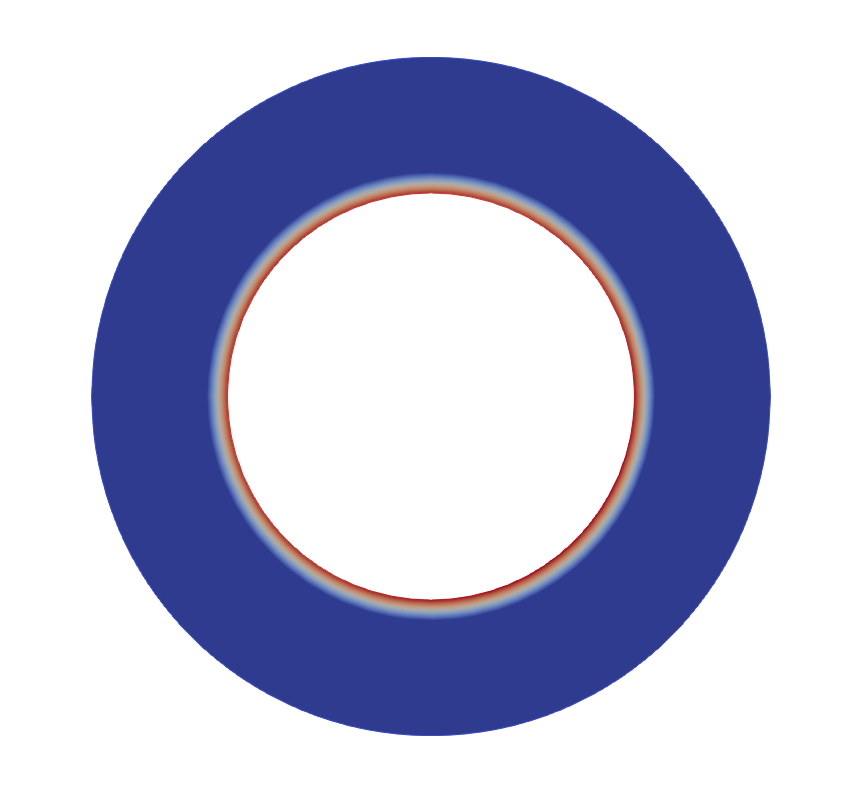}}
	\subfigure[$t=1.20$]{\label{fig:h4p}
		\includegraphics[width=0.225\textwidth]{./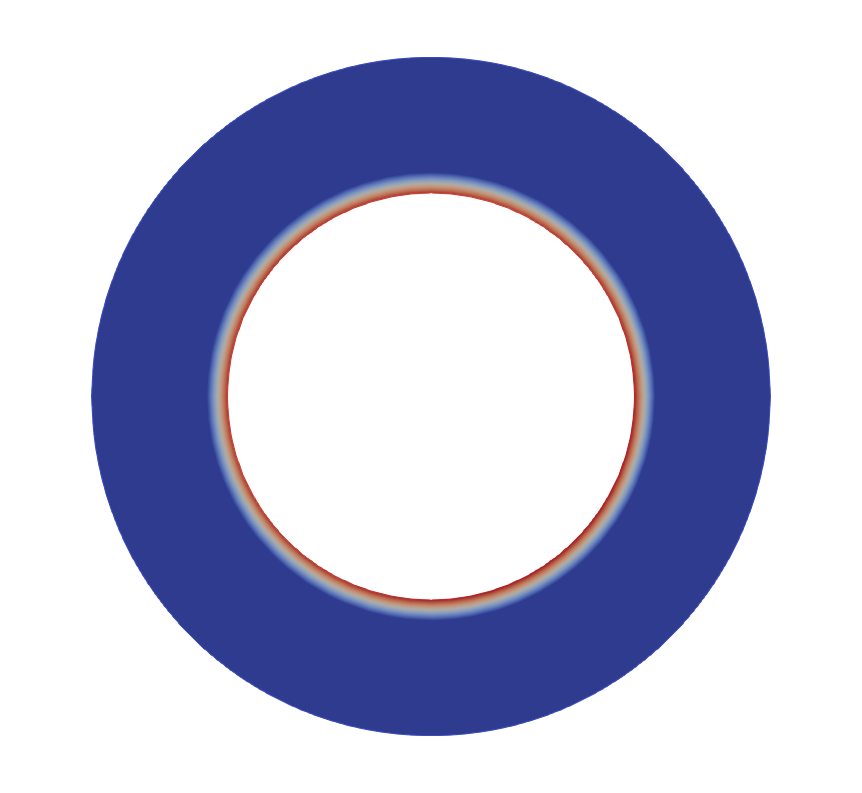}}
	\quad
	
	\subfigure[$t=1.8$]{\label{fig:i4p}
		\includegraphics[width=0.225\textwidth]{./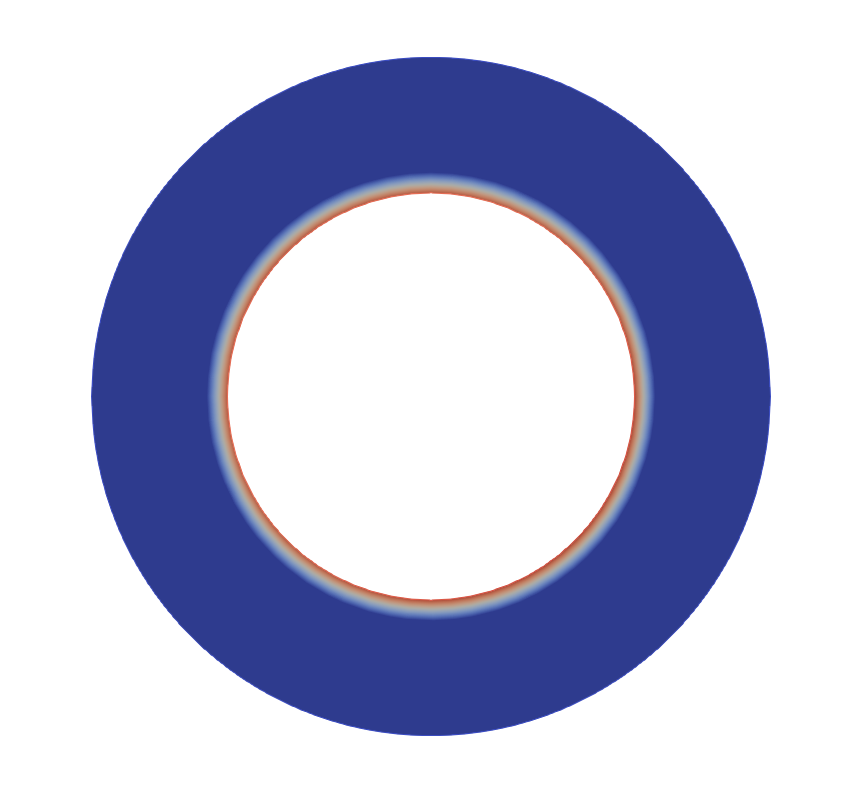}}
	\subfigure[$t=2.0$]{\label{fig:j4p}
		\includegraphics[width=0.225\textwidth]{./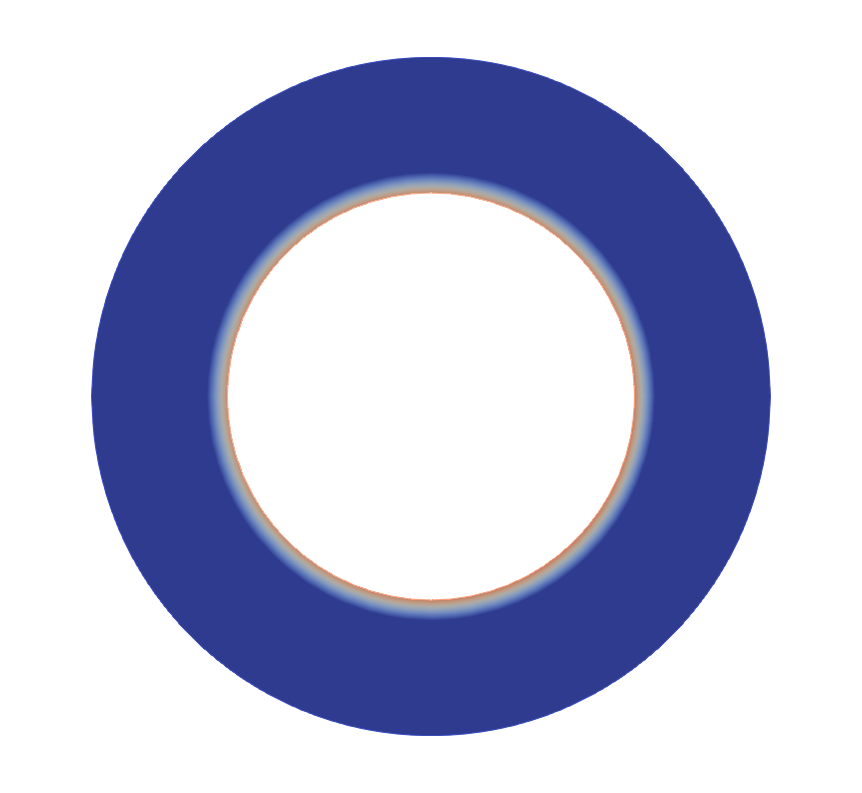}}
	\subfigure[$t=2.2$]{\label{fig:k4p}
		\includegraphics[width=0.225\textwidth]{./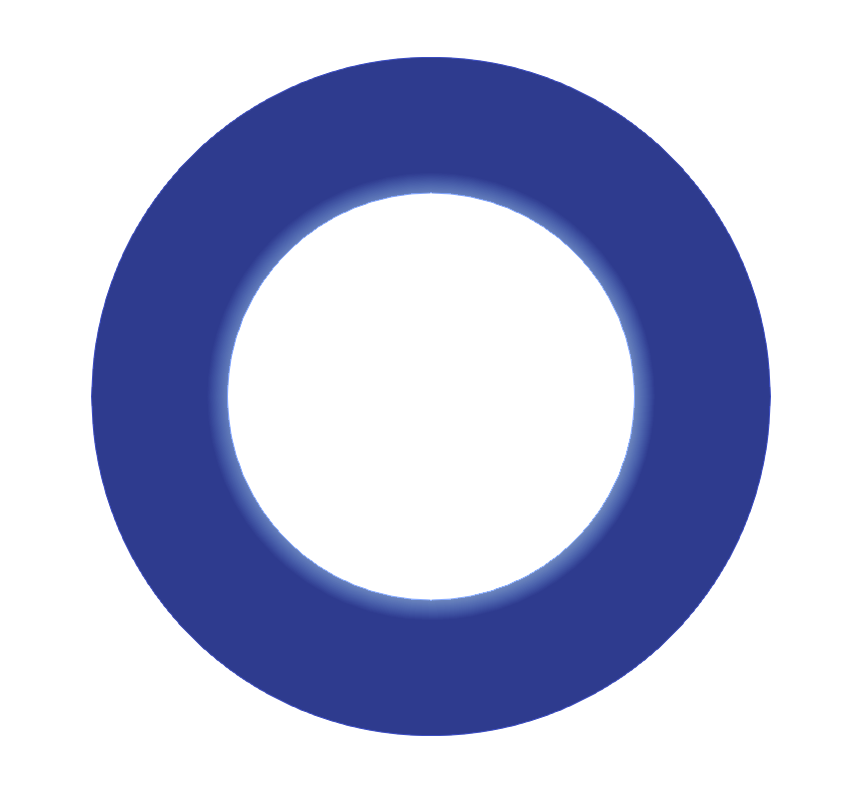}}
	\subfigure[$t=2.4$]{\label{fig:l4p}
		\includegraphics[width=0.225\textwidth]{./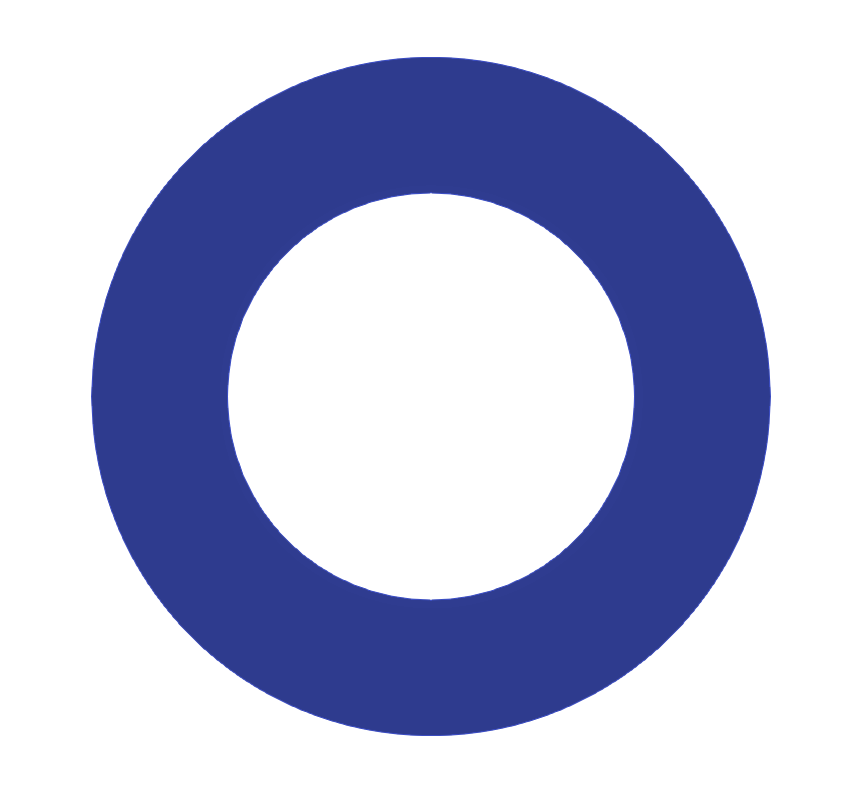}}
	\quad
	
	\subfigure[Color bar for the value of open probability on the inner 
	            circle -$\Upsilon$]{\label{fig:bar14p}
		\includegraphics[width=0.90\textwidth]{./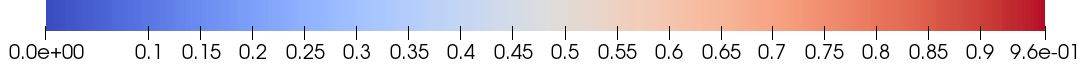}}
	\caption{The black region as in (l) is $\bar{\Omega}_c$, the value is always 0 on $\bar{\Omega}_c/\Upsilon$. Red color part on the inner circle (ER membrane) means the RyR channels are open. We can see that from (a)-(g), the open state propagates to the whole ER membrane which is faster compared with Example 3. From (h) to (l), the value of open probability decreases to the close state.}
	\label{fig:ca2+wave4p}
\end{figure}

\section{Conclusion}\label{con}
In this paper, we analyze the model of calcium dynamics in neurons with ER, obtain the existence, uniqueness and boundedness of the solution, and then propose an efficient implicit-explicit finite element scheme. The necessity of the ODE systems on interfaces is shown in Section \ref{Num}. The focus on calcium dynamics is motivated by the fact that intracellular calcium signals in response to electrical events (e.g. action potentials) trigger a multitude of calcium regulated processes which are relevant in cellular development, learning, and cell survival. The complexity of the cellular calcium-regulating machinery typically prohibits a systematic experimental study and computational models are highly relevant in studying the effect of morphological and biophysical changes on calcium dynamics.   
Models, theorems and algorithms are well established for electrical models, however, calcium dynamics has not been extensively studied, partly because lower-dimensional approximations are insufficient and detailed, high-resolution PDE-based simulations are required (integration of complex geometric structures of cells and intracellular organelles). Thus, 3D models are necessary to accurately capture calcium dynamics in cells and high-performance computing must be utilized. The $L^2$ error estimates, high order stable multi-step implicit-explicit schemes and a parallel implementation of the numerical methods for the 3D problem are part of ongoing work.

%\section*{References}
\bibliography{mybibfile}

\begin{thebibliography}{10}
\expandafter\ifx\csname url\endcsname\relax
  \def\url#1{\texttt{#1}}\fi
\expandafter\ifx\csname urlprefix\endcsname\relax\def\urlprefix{URL }\fi
\expandafter\ifx\csname href\endcsname\relax
  \def\href#1#2{#2} \def\path#1{#1}\fi

\bibitem{Amar2005}
M.~Amar, D.~Andreucci, P.~Bisegna, R.~Gianni, {Existence and uniqueness for an
  elliptic problem with evolution arising in electrodynamics}, Nonlinear
  Analysis: Real World Applications 6~(2) (2005) 367--380.
\newblock \href {http://dx.doi.org/10.1016/j.nonrwa.2004.09.002}
  {\path{doi:10.1016/j.nonrwa.2004.09.002}}.

\bibitem{Breit2018}
M.~Breit, G.~Queisser, {What Is Required for Neuronal Calcium Waves? A
  Numerical Parameter Study}, Journal of Mathematical Neuroscience 8~(1) (2018)
  1--22.
\newblock \href {http://dx.doi.org/10.1186/s13408-018-0064-x}
  {\path{doi:10.1186/s13408-018-0064-x}}.

\bibitem{MacKrill2012}
M.~Fill, J.~A. Copello, {Ryanodine receptor calcium release channels}, Physiol
  Rev 82 (2002) 893--922.
\newblock \href {http://dx.doi.org/10.1007/978-94-007-2888-2_7}
  {\path{doi:10.1007/978-94-007-2888-2_7}}.

\bibitem{HODGKIN1952}
A.~L. Hodgkin, A.~F. Huxley, A quantitative description of membrane current and
  its application to conduction and excitation in nerve, The Journal of
  physiology 117~(4) (1952) 500--544.

\bibitem{Luo1991}
C.~H. Luo, Y.~Rudy, \href{http://www.ncbi.nlm.nih.gov/pubmed/1709839}{{A Model
  of the Ventricular Cardiac Action Potential Depolarization, Repolarization,
  and Their Interaction}}, Circulation Research 68~(6) (1991) 1501--1526.
\newline\urlprefix\url{http://www.ncbi.nlm.nih.gov/pubmed/1709839}

\bibitem{Matano2011}
H.~Matano, Y.~Mori, {Global existence and uniqueness of a three-dimensional
  model of cellular electrophysiology}, Discrete and Continuous Dynamical
  Systems 29~(4) (2011) 1573--1636.
\newblock \href {http://dx.doi.org/10.3934/dcds.2011.29.1573}
  {\path{doi:10.3934/dcds.2011.29.1573}}.

\bibitem{Sneyd2003}
J.~Sneyd, K.~Tsaneva-Atanasova, J.~I. Bruce, S.~V. Straub, D.~R. Giovannucci,
  D.~I. Yule, {A model of calcium waves in pancreatic and parotid acinar
  cells}, Biophysical Journal 85~(3) (2003) 1392--1405.
\newblock \href {http://dx.doi.org/10.1016/S0006-3495(03)74572-X}
  {\path{doi:10.1016/S0006-3495(03)74572-X}}.

\bibitem{Veneroni2006}
M.~Veneroni, {Reaction-diffusion systems for the microscopic cellular model of
  the cardiac electric field}, Mathematical Methods in the Applied Sciences
  29~(14) (2006) 1631--1661.
\newblock \href {http://dx.doi.org/10.1002/mma.740}
  {\path{doi:10.1002/mma.740}}.

\bibitem{Clapham1995}
D.~E. Clapham, {Calcium signaling}, Cell 80~(2) (1995) 259--268.
\newblock \href {http://dx.doi.org/10.1016/0092-8674(95)90408-5}
  {\path{doi:10.1016/0092-8674(95)90408-5}}.

\bibitem{Ozturk2020}
Z.~{\"{O}}zt{\"{u}}rk, C.~J. O'Kane, J.~J. P{\'{e}}rez-Moreno, {Axonal
  Endoplasmic Reticulum Dynamics and Its Roles in Neurodegeneration} (jan
  2020).
\newblock \href {http://dx.doi.org/10.3389/fnins.2020.00048}
  {\path{doi:10.3389/fnins.2020.00048}}.

\bibitem{Raffaello2016}
A.~Raffaello, C.~Mammucari, G.~Gherardi, R.~Rizzuto, {Calcium at the Center of
  Cell Signaling: Interplay between Endoplasmic Reticulum, Mitochondria, and
  Lysosomes}, Trends in Biochemical Sciences 41~(12) (2016) 1035--1049.
\newblock \href {http://dx.doi.org/10.1016/j.tibs.2016.09.001}
  {\path{doi:10.1016/j.tibs.2016.09.001}}.

\bibitem{Rosales2004}
R.~A. Rosales, M.~Fill, A.~L. Escobar, {Calcium Regulation of Single Ryanodine
  Receptor Channel Gating Analyzed Using HMM/MCMC Statistical Methods}, Journal
  of General Physiology 123~(5) (2004) 533--553.
\newblock \href {http://dx.doi.org/10.1085/jgp.200308868}
  {\path{doi:10.1085/jgp.200308868}}.

\bibitem{Ali2019}
F.~Ali, A.~C. Kwan, {Interpreting in vivo calcium signals from neuronal cell
  bodies, axons, and dendrites: a review}, Neurophotonics 7~(01) (2019) 011402.
\newblock \href {http://dx.doi.org/10.1117/1.nph.7.1.011402}
  {\path{doi:10.1117/1.nph.7.1.011402}}.

\bibitem{Wu2017}
Y.~Wu, C.~Whiteus, C.~S. Xu, K.~J. Hayworth, R.~J. Weinberg, H.~F. Hess, P.~{De
  Camilli}, {Contacts between the endoplasmic reticulum and other membranes in
  neurons}, Proceedings of the National Academy of Sciences of the United
  States of America 114~(24) (2017) E4859--E4867.
\newblock \href {http://dx.doi.org/10.1073/pnas.1701078114}
  {\path{doi:10.1073/pnas.1701078114}}.

\bibitem{Atri1993}
A.~Atri, J.~Amundson, D.~Clapham, J.~Sneyd, {A single-pool model for
  intracellular calcium oscillations and waves in the Xenopus laevis oocyte},
  Biophysical Journal 65~(4) (1993) 1727--1739.
\newblock \href {http://dx.doi.org/10.1016/S0006-3495(93)81191-3}
  {\path{doi:10.1016/S0006-3495(93)81191-3}}.

\bibitem{Breit2018a}
M.~Breit, M.~Kessler, M.~Stepniewski, A.~Vlachos, G.~Queisser,
  {Spine-to-Dendrite Calcium Modeling Discloses Relevance for Precise
  Positioning of Ryanodine Receptor-Containing Spine Endoplasmic Reticulum},
  Scientific Reports 8~(1) (2018) 1--17.
\newblock \href {http://dx.doi.org/10.1038/s41598-018-33343-9}
  {\path{doi:10.1038/s41598-018-33343-9}}.

\bibitem{Keizer1996}
J.~Keizer, L.~Levine, {Ryanodine receptor adaptation and Ca$^{2+}$--induced
  Ca$^{2+}$ release-- dependent Ca$^{2+}$ oscillations}, Biophysical Journal
  71~(6) (1996) 3477--3487.
\newblock \href {http://dx.doi.org/10.1016/S0006-3495(96)79543-7}
  {\path{doi:10.1016/S0006-3495(96)79543-7}}.

\bibitem{Means2006}
S.~Means, A.~J. Smith, J.~Shepherd, J.~Shadid, J.~Fowler, R.~J. Wojcikiewicz,
  T.~Mazel, G.~D. Smith, B.~S. Wilson, {Reaction diffusion modeling of calcium
  dynamics with realistic ER geometry}, Biophysical Journal 91~(2) (2006)
  537--557.
\newblock \href {http://dx.doi.org/10.1529/biophysj.105.075036}
  {\path{doi:10.1529/biophysj.105.075036}}.

\bibitem{Calabro2006}
F.~Calabr{\`{o}}, P.~Zunino, {Analysis of parabolic problems on partitioned
  domains with nonlinear conditions at the interface. Application to mass
  transfer through semi-permeable membranes}, Mathematical Models and Methods
  in Applied Sciences 16~(4) (2006) 479--501.
\newblock \href {http://dx.doi.org/10.1142/S0218202506001236}
  {\path{doi:10.1142/S0218202506001236}}.

\bibitem{Cangiani2013}
A.~Cangiani, E.~H. Georgoulis, M.~Jensen, {Discontinuous Galerkin Methods for
  Mass Transfer through Semipermeable Membranes}, SIAM Journal on Numerical
  Analysis 51~(5) (2013) 2911--2934.
\newblock \href {http://dx.doi.org/10.1137/120890429}
  {\path{doi:10.1137/120890429}}.

\bibitem{Cangiani2016}
A.~Cangiani, E.~H. Georgoulis, M.~Jensen, {Discontinuous Galerkin methods for
  fast reactive mass transfer through semi-permeable membranes}, Applied
  Numerical Mathematics 104 (2016) 3--14.
\newblock \href {http://dx.doi.org/10.1016/j.apnum.2014.06.007}
  {\path{doi:10.1016/j.apnum.2014.06.007}}.

\bibitem{Henriquez2017}
F.~Henr{\'{i}}quez, C.~Jerez-Hanckes, F.~Altermatt, {Boundary integral
  formulation and semi-implicit scheme coupling for modeling cells under
  electrical stimulation}, Numerische Mathematik 136~(1) (2017) 101--145.
\newblock \href {http://dx.doi.org/10.1007/s00211-016-0835-9}
  {\path{doi:10.1007/s00211-016-0835-9}}.

\bibitem{connors2009}
J.~M. Connors, J.~S. Howell, W.~J. Layton, Partitioned time stepping for a
  parabolic two domain problem, SIAM Journal on Numerical Analysis 47~(5)
  (2009) 3526--3549.

\bibitem{Gil2021}
D.~Gil, A.~H. Guse, G.~Dupont, A.~Tepikin, G.~Ullah, {Three-Dimensional Model
  of Sub-Plasmalemmal Ca$^{2+}$ Microdomains Evoked by the Interplay Between
  ORAI1 and InsP3 Receptors}, Front. Immunol. 12 (2021) 1.
\newblock \href {http://dx.doi.org/10.3389/fimmu.2021.659790}
  {\path{doi:10.3389/fimmu.2021.659790}}.

\bibitem{Douglas1973}
J.~Douglas, T.~Dupont, {Galerkin methods for parabolic equations with nonlinear
  boundary conditions}, Numerische Mathematik 20~(3) (1973) 213--237.
\newblock \href {http://dx.doi.org/10.1007/BF01436565}
  {\path{doi:10.1007/BF01436565}}.

\bibitem{Fri1967}
A.~Friedman, {Partial Differential Equations of Parabolic Type}, Courier Dover
  Publications, 2008.

\bibitem{Pao1992}
C.~V. Pao, {Nonlinear Parabolic and Elliptic Equations}, Plenum Press, New
  York, 1992.

\bibitem{calabro2013}
F.~Calabr{\`o}, Numerical treatment of elliptic problems nonlinearly coupled
  through the interface, Journal of Scientific Computing 57~(2) (2013)
  300--312.

\bibitem{09iso}
J.~A. Cottrell, T.~J. Hughes, Y.~Bazilevs, Isogeometric analysis: toward
  integration of CAD and FEA, John Wiley \& Sons, 2009.

\bibitem{iso1986}
M.~Lenoir, Optimal isoparametric finite elements and error estimates for
  domains involving curved boundaries, SIAM Journal on Numerical Analysis
  23~(3) (1986) 562--580.

\bibitem{Hecht12}
F.~Hecht, \href{https://doi.org/10.1515/jnum-2012-0013}{New development in
  freefem++}, Journal of Numerical Mathematics 20~(3-4) (2012) 251--266.
\newblock \href {http://dx.doi.org/doi:10.1515/jnum-2012-0013}
  {\path{doi:doi:10.1515/jnum-2012-0013}}.
\newline\urlprefix\url{https://doi.org/10.1515/jnum-2012-0013}

\end{thebibliography}

\end{document}